\renewcommand{\mathcal}{\mathscr}
\newcommand{\cH}{\mathcal{H}}
\newcommand{\cF}{\mathcal{F}}
\renewcommand{\P}{\mathrm{P}}
\newcommand {\E}{{\mathrm E}}
\newcommand{\1}{{\bf 1}}
\newcommand{\dimh}{\dim_{_{\rm H}}}
\newcommand{\R}{\mathbb{R}}
\newcommand{\e}{\epsilon}
\newtheorem{stat}{Statement}[section]
\newtheorem{prop}[stat]{Proposition}
\newtheorem{cor}[stat]{Corollary}
\newtheorem{thm}[stat]{Theorem}
\newtheorem{lem}[stat]{Lemma}
\theoremstyle{definition} \newtheorem{remark}[stat]{Remark}
\numberwithin{equation}{section}
\begin{document}
\title{ \bf Hitting probabilities for systems of non-linear stochastic heat equations in spatial dimension $k\geq 1$}

\author{Robert C. Dalang$^{1,4}$, Davar Khoshnevisan$^{2,5}$ and Eulalia Nualart$^3$}

\date{}

\maketitle

\footnotetext[1]{Institut de Math\'ematiques, Ecole Polytechnique
F\'ed\'erale de Lausanne, Station 8, CH-1015 Lausanne, Switzerland. \texttt{robert.dalang@epfl.ch}}

\footnotetext[2]{Department of Mathematics, The
University of Utah, 155 S. 1400 E. Salt Lake City,
UT 84112-0090, USA. \texttt{davar@math.utah.edu},
\texttt{http://www.math.utah.edu/$\sim$davar/}}

\footnotetext[3]{Institut Galil\'ee, Universit\'e Paris 13, 93430 Villetaneuse, France. \texttt{eulalia@nualart.es},
\texttt{http://nualart.es}}

\footnotetext[4]{Supported in part by the Swiss National Foundation for Scientific Research.}

\footnotetext[5]{Research supported in part by a grant
from the US National Science Foundation.}

\maketitle
\begin{abstract}
We consider a system of $d$ non-linear stochastic heat equations in spatial
dimension $k \geq 1$, whose solution is an $\R^d$-valued random field $u= \{u(t\,,x),\, (t,x) \in \R_+ \times \R^k\}$. The $d$-dimensional driving noise is white in time and with a spatially homogeneous covariance defined as a Riesz kernel with exponent $\beta$, where $0<\beta<(2 \wedge k)$. The non-linearities appear
both as additive drift terms and as multipliers of the noise. Using techniques of
Malliavin calculus, we establish an upper bound on the two-point density, with respect to Lebesgue measure, of the $\R^{2d}$-valued random vector
$(u(s,y),u(t,x))$, that, in particular, quantifies how this density degenerates as
$(s,y) \to (t,x)$. From this result, we deduce a lower bound on hitting
probabilities of the process $u$, in terms of Newtonian capacity. 
We also establish an upper bound on hitting
probabilities of the process in terms of Hausdorff measure. 
These estimates make it possible to show that points are polar when $d > \frac{4+2k}{2-\beta}$ and are not polar when $d < \frac{4+2k}{2-\beta}$. 
In the first case, we also show that the
Hausdorff dimension of the range of the process is $\frac{4+2k}{2-\beta}$ a.s.
\end{abstract}

\vskip 1cm \noindent {\em Abbreviated title:}  Hitting probabilities

\vskip 1cm {\it \noindent AMS 2000 subject classifications:}
Primary: 60H15, 60J45; Secondary: 60H07, 60G60. \vskip 10pt

\noindent {\it Key words and phrases}. Hitting probabilities, systems of non-linear stochastic heat equations, spatially homogeneous Gaussian noise, Malliavin calculus. \vskip 6cm \pagebreak

\section{Introduction and main results}

Consider the following system of stochastic partial differential equations:
\begin{equation} \label{equa1}
\left\{
\begin{array}{l}{\displaystyle
\frac{\partial}{\partial t} u_i (t,x)= \frac{1}{2}\, \Delta_x u_i(t,x) +\sum_{j=1}^d \sigma_{i,j}(u(t,x)) \, \dot{F}^j(t,x) + b_i(u(t,x)), } \\
u_i(0,x)=0, \qquad\qquad\qquad\qquad\qquad\qquad i \in \{1,...,d\}, 
\end{array}\right.
\end{equation}
$t \geq 0$, $x \in \R^k$, $k \geq 1$, $\sigma_{i,j}, b_i : \R^d \rightarrow \R$ are globally Lipschitz functions, $i,j \in \{1,\dots,d\}$, and the $\Delta_x$ denotes the Laplacian in the spatial variable $x$.

The noise $\dot{F}=(\dot{F}^1,...,\dot{F}^d)$ is a spatially homogeneous centered Gaussian generalized random field with covariance of the form
\begin{equation}\label{noise}
\E \, [\dot{F}^i(t,x) \dot{F}^j(s,y)]= \delta(t-s) \Vert x-y \Vert^{-\beta} \delta_{ij}, \qquad 0<\beta< (2 \wedge k).
\end{equation}
Here, $\delta(\cdot)$ denotes the Dirac delta function, $\delta_{ij}$ the Kronecker symbol and $\Vert \cdot \Vert$ is the Euclidean norm. In particular, the $d$-dimensional driving noise $\dot{F}$ is white in time and with a spatially homogeneous covariance given by the Riesz kernel $f(x)=\Vert x \Vert^{-\beta}$.

The solution $u$ of \eqref{equa1} is known to be a $d$-dimensional random field (see Section \ref{sec2}, where precise definitions and references are given), and the aim of this paper is to develop potential theory for $u$. In particular, given a set $A \subset \R^d$, we want to determine whether or not the process $u$ hits $A$ with positive probability. For systems of linear and/or nonlinear stochastic heat equations in spatial dimension $1$ driven by a $d$-dimensional space-time white noise, this type of question was studied in {\sc Dalang, Khoshnevisan, and Nualart} \cite{Dalang:05} and \cite{Dalang2:05}. For systems of linear and/or nonlinear stochastic wave equations, this was studied first in {\sc Dalang and Nualart} \cite{Dalang:04} for the reduced wave equation in spatial dimension $1$, and in higher spatial dimensions in {\sc Dalang and Sanz-Sol\'e} \cite{Dalang:10,Dalang:11}. The approach of this last paper is used for some of our estimates (see Proposition \ref{large}).

We note that for the Gaussian random fields, and, in particular, for \eqref{equa1} when $b \equiv 0$ and $\sigma = I_d$, the $d \times d$-identity matrix, there is a well-developed potential theory \cite{Bierme:09, Xiao:09}. The main effort here concerns the case where $b$ and/or $\sigma$ are not constant, in which case $u$ is {\em not} Gaussian.

Let us introduce some notation concerning potential theory. For all Borel sets $F\subseteq\R^d$, let $\mathcal{P}(F)$ denote the set of all probability measures with compact support in $F$. For all $\alpha \in \R$ and $\mu\in\mathcal{P}(\R^k)$, we let $I_\alpha(\mu)$ denote the \emph{$\alpha$-dimensional energy} of $\mu$, that is,
\begin{equation*}
I_\alpha(\mu) := \iint {\rm K}_\alpha(\|x-y\|)\, \mu(dx)\,\mu(dy),
\end{equation*}
where
\begin{equation} \label{k}
{\rm K}_\alpha(r) :=
\begin{cases}
r^{-\alpha}&\text{if $\alpha >0$},\\
\log ( N_0/r ) &\text{if $\alpha =0$},\\
1&\text{if $\alpha<0$},
\end{cases}
\end{equation}
where $N_0$ is a constant whose value will be specified later (at the end of the proof of Lemma \ref{prel}).

For all $\alpha\in\R$ and Borel sets $F\subset\R^k$, $\text{Cap}_\alpha(F)$ denotes the \emph{$\alpha$-dimensional capacity of $F$}, that is,
\begin{equation*}
\text{Cap}_\alpha(F) := \left[ \inf_{\mu\in\mathcal{P}(F)}
I_\alpha(\mu) \right]^{-1},
\end{equation*}
where, by definition, $1/\infty:=0$.

Given $\alpha\geq 0$, the $\alpha$-dimensional \emph{Hausdorff measure}
of $F$ is defined by
\begin{equation}\label{eq:HausdorfMeasure}
{\mathcal{H}}_ \alpha (F)= \lim_{\epsilon \rightarrow 0^+} \inf
\left\{ \sum_{i=1}^{\infty} (2r_i)^ \alpha : F \subseteq
\bigcup_{i=1}^{\infty} B(x_i\,, r_i), \ \sup_{i\ge 1} r_i \leq
\epsilon \right\},
\end{equation}
where $B(x\,,r)$ denotes the open (Euclidean) ball of radius
$r>0$ centered at $x\in \R^d$. When $\alpha <0$, we define
$\mathcal{H}_\alpha (F)$ to be infinite.

\vskip 12pt

Consider the following hypotheses on the coefficients of the system of equations (\ref{equa1}), which are common assumptions when using Malliavin calculus:
\begin{itemize}
\item[{\bf P1}] The functions $\sigma_{i,j}$ and $b_i$
are $C^{\infty}$ have bounded
partial derivatives of all positive orders,  and the $\sigma_{i,j}$ are bounded, $i,j \in\{1,\dots,d\}$.
\item[{\bf P2}] The matrix $\sigma=(\sigma_{i,j})_{1 \leq i,j \leq d}$ is {\em strongly elliptic,} that is,
$\Vert \sigma(x) \cdot \xi \Vert^2 \geq \rho^2 >0$ (or,
equivalently, since $\sigma$ is a square matrix, $\Vert \xi^{\sf T}
\cdot \sigma(x) \Vert^2 \geq \rho^2
>0$) for some $\rho>0$, for all $x \in \mathbb{R}^d$, $\xi \in
\mathbb{R}^d$, $\Vert \xi \Vert=1$.
\end{itemize}

\begin{remark}
Note that because $\sigma$ is a square matrix,
\begin{equation*}
\inf_{x \in \mathbb{R}^d} \inf_{\Vert \xi \Vert=1} \Vert \sigma(x)
\cdot \xi \Vert^2= \inf_{x \in \mathbb{R}^d} \inf_{\Vert \xi
\Vert=1} \Vert \xi^{\sf T} \cdot \sigma(x) \Vert^2.
\end{equation*}
However, for non square matrices, this equality is false in general.
\end{remark}

For $T>0$ fixed, we say that $I \times J \subset (0,T] \times \mathbb{R}^k$ is a closed
non-trivial rectangle if $I \subset (0,T]$ is a closed non-trivial
interval and $J$ is of the form $[a_1,b_1]\times \cdots \times
[a_k,b_k]$, where $a_i,b_i \in \mathbb{R}$ and $a_i<b_i$,
$i=1,...,k$. \vskip 12pt The main result of this article is the
following.
\begin{thm} \label{t1}
Let ${u}$ denote the solution of \textnormal{(\ref{equa1})}.
Assume conditions \textnormal{{\bf P1}} and \textnormal{{\bf P2}}. Fix $T>0$ and let $I \times J \subset (0,T] \times \mathbb{R}^k$ be a closed non-trivial rectangle. Fix $M>0$ and $\eta>0$.
\begin{itemize}
\item[\textnormal{(a)}] There exists $C>0$ such that for all compact sets
$A\subseteq[-M\,,M]^d$,
\begin{equation*}
\P\left\{ { u}(I \times J) \cap A
\neq\varnothing \right\} \le C \, \mathcal{H}_{
d-(\frac{4+2k}{2-\beta})-\eta}(A).
\end{equation*}

\item[\textnormal{(b)}] There exists $c>0$ such that for all compact sets $A\subseteq[-M\,,M]^d$,
\begin{equation*}
\P\left\{ { u}(I \times J) \cap A
\neq\varnothing \right\} \ge c \, \textnormal{Cap}_{d-(\frac{4+2k}{2-\beta})+\eta}(A) .
\end{equation*}
\end{itemize}
\end{thm}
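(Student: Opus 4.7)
The plan is to establish parts (a) and (b) separately, in both cases reducing the global hitting probability to local one-point/two-point density estimates for $u(t,x)$, which would be obtained by Malliavin calculus and which match the parabolic scaling of the SPDE. The key parameter is the critical dimension $Q := (4+2k)/(2-\beta)$, which is the natural parabolic dimension of the space-time domain $(2+k)$ divided by the Hölder exponent $(2-\beta)/2$ of the random field $u$.

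For the upper bound in (a), I would argue by a covering-plus-Kolmogorov scheme. First, partition $I \times J$ into ``parabolic boxes'' of spatial side $r^{2/(2-\beta)}$ and time side $r^{4/(2-\beta)}$, chosen so that, by the spatial/temporal Hölder regularity of $u$ of orders $(2-\beta)/2$ and $(2-\beta)/4$ respectively, the oscillation of $u$ on each box is of order $r$. The number of such boxes is of order $r^{-Q}$. On a single box, a uniform one-point density bound $p_{u(t,x)}(z) \leq C$ combined with Kolmogorov's continuity estimate yields
\begin{equation*}
\P\bigl\{u(\text{box}) \cap B(z, r) \neq \varnothing\bigr\} \leq C\, r^{d}.
\end{equation*}
Summing over the $r^{-Q}$ boxes gives $\P\{u(I\times J) \cap B(z,r) \neq \varnothing\} \leq C\, r^{d-Q-\eta}$ (the small $\eta$-loss absorbs polynomial factors arising from the modulus of continuity). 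Given any covering of $A$ by balls of radii $r_i \leq \e$, summing this estimate and then taking the infimum over coverings and sending $\e \to 0$ produces the Hausdorff measure $\mathcal{H}_{d-Q-\eta}(A)$.

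For the lower bound in (b), I would run a second-moment (Paley--Zygmund) argument against a Frostman measure. Fix $\mu \in \mathcal{P}(A)$ with $I_{d-Q+\eta}(\mu) < \infty$ and define the approximate occupation functional
\begin{equation*}
J_\e \;:=\; \int_{I\times J}\!\!\int_{\R^d}\frac{\1_{B(z,\e)}(u(t,x))}{\e^d}\,\mu(dz)\,dt\,dx.
\end{equation*}
The one-point density bound gives $\E J_\e \geq c$ uniformly in $\e$. For the second moment,
\begin{equation*}
\E J_\e^2 \;=\; \int\!\!\int\!\!\int\!\!\int \frac{\P\{u(s,y)\in B(z,\e),\,u(t,x)\in B(w,\e)\}}{\e^{2d}}\,\mu(dz)\,\mu(dw)\,ds\,dy\,dt\,dx,
\end{equation*}
and the two-point density upper bound announced in the abstract, which quantifies the precise degeneration as $(s,y)\to(t,x)$, should bound this by a constant multiple of $I_{d-Q+\eta}(\mu)$. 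Paley--Zygmund then gives $\P\{J_\e > 0\} \geq c/I_{d-Q+\eta}(\mu)$, and since $\{J_\e > 0\}$ is essentially contained in $\{u(I\times J) \cap A^\e \neq \varnothing\}$, taking $\e\to 0$ by compactness and then optimizing over $\mu$ yields $\textnormal{Cap}_{d-Q+\eta}(A)$.

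The main obstacle, and what makes this work more than a rerun of the Gaussian theory, is obtaining a sharp enough two-point density estimate for $(u(s,y),u(t,x))$ with explicit control of its degeneration as $(s,y)\to(t,x)$. Existence and smoothness of this density come from standard Malliavin calculus applied to the mild formulation of \eqref{equa1}, but the quantitative upper bound requires delicate estimates on the negative moments of the determinant of the Malliavin covariance matrix of the pair, taking into account the parabolic scaling and the Riesz covariance exponent $\beta$. Without a bound that matches the parabolic scale of the problem, the second moment in (b) would diverge with $\e$ and the covering estimate in (a) would lose the exponent $d-Q$. The technique alluded to by the reference to Proposition \ref{large} (following Dalang--Sanz-Sol\'e) should provide the framework in which these two-point estimates are carried out.
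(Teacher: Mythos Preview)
Your outline is correct and matches the paper's approach closely: part (a) is proved via a parabolic covering of $I\times J$ combined with the uniform one-point density bound (Theorem \ref{t2}(a)) and H\"older regularity, exactly as you describe; part (b) is proved via a second-moment argument against a Frostman measure, using the two-point density estimate (Theorem \ref{t2}(b)) together with an explicit integration lemma (Lemma \ref{prel}) to bound $\E J_\e^2$ by the relevant energy.

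The one ingredient you pass over is the first-moment lower bound $\E J_\e \geq c > 0$: an \emph{upper} bound on the one-point density does not give this; one needs strict positivity of $p_{t,x}(z)$ for $z\in[-M,M]^d$ (so that $\inf_{\|z\|\le M}\int_I\int_J p_{t,x}(z)\,dt\,dx > 0$). The paper invokes this as a separate, nontrivial input (Theorem \ref{te2}, due to E.~Nualart \cite{Nualart:09}), and it is not a byproduct of the Malliavin-calculus estimates that yield boundedness and smoothness of the density.
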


As a consequence of Theorem~\ref{t1}, we deduce the following result on the polarity of points. Recall that $A$ is a {\em polar set} for $u$ if $P\{u(I\times J) \cap A \neq \varnothing \}=0$, for any $I\times J$ as in Theorem~\ref{t1}.
\begin{cor} \label{c33}
Let ${u}$ denote the solution of \textnormal{(\ref{equa1})}. Assume \textnormal{{\bf P1}} and \textnormal{{\bf P2}}. Then points are not polar for $u$ when
$d < \frac{4+2k}{2-\beta}$,
and are polar when $d > \frac{4+2k}{2-\beta}$ (if $\frac{4+2k}{2-\beta}$ is an integer, then the case $d=\frac{4+2k}{2-\beta}$ is open).
\end{cor}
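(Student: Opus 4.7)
The plan is to deduce both statements directly from Theorem~\ref{t1} applied to the singleton $A=\{z\}$, by recalling two elementary facts about Hausdorff measure and capacity of a point: for any $\alpha>0$, $\mathcal{H}_\alpha(\{z\})=0$ (cover $\{z\}$ by one ball of arbitrarily small radius), and for any $\alpha<0$, the kernel ${\rm K}_\alpha\equiv 1$, so $I_\alpha(\delta_z)=1$ and hence $\textnormal{Cap}_\alpha(\{z\})=1$.

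First I would treat the polar case. Assume $d>\frac{4+2k}{2-\beta}$, fix an arbitrary $z\in\R^d$, and fix a rectangle $I\times J$ as in Theorem~\ref{t1}. Choose $M>\max_i|z_i|$ so that $\{z\}\subset[-M,M]^d$, and choose $\eta>0$ small enough that
\[
\alpha := d-\tfrac{4+2k}{2-\beta}-\eta \;>\; 0.
\]
Part (a) of Theorem~\ref{t1} then gives
\[
\P\{u(I\times J)\cap\{z\}\neq\varnothing\} \;\le\; C\,\mathcal{H}_\alpha(\{z\}) \;=\; 0.
\]
Since this holds for every such rectangle, $\{z\}$ is polar; as $z$ was arbitrary, all points are polar.

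Next I would treat the non-polar case. Assume $d<\frac{4+2k}{2-\beta}$, fix $z\in\R^d$, fix $I\times J$ and $M$ as above, and now choose $\eta>0$ small enough that
\[
\alpha := d-\tfrac{4+2k}{2-\beta}+\eta \;<\; 0.
\]
By the remark above, $\textnormal{Cap}_\alpha(\{z\})=1$, so part (b) of Theorem~\ref{t1} gives
\[
\P\{u(I\times J)\cap\{z\}\neq\varnothing\} \;\ge\; c\,\textnormal{Cap}_\alpha(\{z\}) \;=\; c \;>\; 0,
\]
so $\{z\}$ is not polar. There is essentially no obstacle beyond checking that $\eta$ can be chosen to make the Hausdorff or capacity index strictly positive or strictly negative respectively, which is possible exactly because $d\neq\frac{4+2k}{2-\beta}$; this is also what forces the critical case $d=\frac{4+2k}{2-\beta}$ (when it is an integer) to remain open, since then no choice of $\eta>0$ yields the needed sign on either side.
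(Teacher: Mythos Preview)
Your proof is correct and follows essentially the same approach as the paper: apply Theorem~\ref{t1} to the singleton $A=\{z\}$, choose $\eta>0$ so that the Hausdorff or capacity index has the appropriate sign, and invoke the elementary facts $\mathcal{H}_\alpha(\{z\})=0$ for $\alpha>0$ and $\textnormal{Cap}_\alpha(\{z\})=1$ for $\alpha<0$. The paper's proof is slightly terser but otherwise identical in structure and content.
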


Another consequence of Theorem~\ref{t1} is the Hausdorff dimension of the range
of the process $u$.
\begin{cor} \label{c3bis}
Let ${u}$ denote the solution of \textnormal{(\ref{equa1})}. Assume \textnormal{{\bf P1}} and \textnormal{{\bf P2}}. 
If $d > \frac{4+2k}{2-\beta}$, then a.s.,
\begin{equation*}
\dimh (u(\mathbb{R}_+ \times \R^k)) = \frac{4+2k}{2-\beta}.
\end{equation*}
\end{cor}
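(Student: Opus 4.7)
The plan is to prove matching upper and lower bounds on $\dimh u(I_n \times J_n)$ for the exhausting sequence $I_n \times J_n := [1/n, n] \times [-n, n]^k$, and then let $n \to \infty$. Only two inputs are really needed: (i) joint H\"older continuity of $u$, and (ii) a Gaussian-type upper bound on the density of the non-Gaussian increment $u(t,x)-u(s,y)$, of the same flavor as the two-point density estimate that underlies Theorem~\ref{t1}.

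For the upper bound I will invoke that, for this stochastic heat equation with Riesz covariance of exponent $\beta\in(0,2\wedge k)$, $u$ is almost surely H\"older continuous on every compact space-time rectangle of any exponent strictly less than $(2-\beta)/4$ in time and $(2-\beta)/2$ in space. Cover $I_n\times J_n$ by parabolic boxes of time-side $r^2$ and spatial side $r$, hence by $O(r^{-(k+2)})$ of them; each such box has $u$-image of diameter at most $C(\omega)\,r^{(2-\beta)/2-2\eta}$. Summing $(\mathrm{diam})^\alpha$ over the cover gives a quantity that vanishes as $r\downarrow 0$ whenever $\alpha > \frac{2(2+k)}{(2-\beta)-4\eta}$; letting $\eta\downarrow 0$ yields $\dimh u(I_n\times J_n)\le \frac{4+2k}{2-\beta}$ a.s.

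For the matching lower bound I will apply the Frostman energy method. Fix a rational $\gamma < \frac{4+2k}{2-\beta}$ (in particular $\gamma < d$ by hypothesis) and let $\nu:=u_*\lambda$ be the push-forward of Lebesgue measure $\lambda$ on $I_n\times J_n$, a random measure supported on $u(I_n\times J_n)$. Then
\begin{equation*}
\E\bigl[I_\gamma(\nu)\bigr] = \iint \E\bigl[\|u(t,x) - u(s,y)\|^{-\gamma}\bigr] \, dt\, dx\, ds\, dy,
\end{equation*}
and, using a Gaussian-type bound on the density of $u(t,x)-u(s,y)$,
\begin{equation*}
\E\bigl[\|u(t,x)-u(s,y)\|^{-\gamma}\bigr] \le C\bigl(|t-s|^{1/2}+\|x-y\|\bigr)^{-\gamma(2-\beta)/2}.
\end{equation*}
A parabolic change of variables shows that the right-hand side integrates to a finite quantity on $(I_n\times J_n)^2$ precisely when $\gamma<\frac{4+2k}{2-\beta}$. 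Hence $I_\gamma(\nu)<\infty$ a.s., and Frostman's lemma gives $\dimh u(I_n\times J_n)\ge \gamma$ a.s.

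Taking a countable intersection of these events over rational $\gamma\uparrow\frac{4+2k}{2-\beta}$ and a countable union over $n$ gives $\dimh u(\R_+\times \R^k)=\frac{4+2k}{2-\beta}$ a.s. The main non-routine step I anticipate is justifying the Gaussian-type upper bound on the density of the non-Gaussian increment $u(t,x)-u(s,y)$ uniformly as $(s,y)\to(t,x)$; this is exactly the kind of Malliavin-calculus input that drives the two-point density estimate behind Theorem~\ref{t1}(a), so I would extract it directly from that analysis rather than reprove it here.
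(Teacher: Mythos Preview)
Your argument is correct, but it is a genuinely different route from the paper's. The paper does not compute the Hausdorff dimension directly. Instead, it observes that Theorem~\ref{t1} pins down the \emph{stochastic codimension} of the range: for compact $A\subset\R^d$, $\P\{u(I\times J)\cap A\neq\varnothing\}>0$ iff $\dimh A > d-\frac{4+2k}{2-\beta}$ (up to the $\eta$-loss), so $\mathrm{codim}(u(\R_+\times\R^k))=d-\frac{4+2k}{2-\beta}$. It then invokes the general identity $\dimh(E)+\mathrm{codim}(E)=d$ a.s.\ from \cite[Thm.~4.7.1, Chap.~11]{Khoshnevisan:02}. This is a two-line proof, but it leans on the full strength of both halves of Theorem~\ref{t1}, hence on the two-point density estimate of Theorem~\ref{t2}(b).

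Your approach is more self-contained. Your upper bound via H\"older continuity and a parabolic covering is elementary and avoids Malliavin calculus entirely---it uses only \eqref{equa3}. Your lower bound via the Frostman energy of the occupation measure needs only a \emph{one-point} input: a uniform upper bound on the density of the increment $u(t,x)-u(s,y)$ of order $(|t-s|^{\frac{2-\beta}{2}}+\Vert x-y\Vert^{2-\beta})^{-(d+\eta)/2}$, which as you say can be extracted from the Malliavin matrix estimates behind Theorem~\ref{t2}(b) (specifically, restricting Proposition~\ref{smalle} to vectors $\xi=(0,\mu)$ bounds $\det\gamma_{u(t,x)-u(s,y)}$ from below, and Proposition~\ref{derivat} handles the derivative norms). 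This is strictly less input than the full two-point bound. The trade-off: the paper's route gives the dimension as an immediate corollary of results it already needs for the hitting-probability theorem, whereas your route is more direct but requires isolating a lemma (the one-point increment density bound) that the paper never states on its own.
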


The result of Theorem \ref{t1} can be compared to the best result available for the Gaussian case, using the result of \cite[Theorem 7.6]{Xiao:09}.

\begin{thm}\label{t1'} Let $v$ denote the solution of \eqref{equa1} when $ b \equiv 0$ and $\sigma \equiv I_d$. Fix $T, M>0$ and let $I \times J \subset (0,T] \times \mathbb{R}^k$ be a closed non-trivial rectangle. There exists $c>0$ such that for all compact sets $A\subseteq[-M\,,M]^d$,
\begin{equation*}
 c^{-1} \, \textnormal{Cap}_{d-(\frac{4+2k}{2-\beta})}(A) 
  \le \P\left\{ { v}(I \times J) \cap A \neq\varnothing \right\}  
  \le c \, \mathcal{H}_{d-(\frac{4+2k}{2-\beta})}(A).
\end{equation*}
\end{thm}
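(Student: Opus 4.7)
The plan is to reduce the claim to an application of Xiao's Theorem 7.6 from \cite{Xiao:09}, which provides matching capacity/Hausdorff hitting probability bounds for Gaussian random fields that satisfy suitable stationarity-of-increments and local non-determinism conditions. Since $b\equiv 0$ and $\sigma\equiv I_d$, the solution $v=(v_1,\dots,v_d)$ is Gaussian, its $d$ coordinates are independent and identically distributed, and each $v_i$ is a centered Gaussian random field indexed by the $(1+k)$-dimensional parameter $(t,x)\in(0,T]\times\R^k$. Thus the main work is to identify the canonical metric on the parameter set and then check the hypotheses of \cite[Theorem 7.6]{Xiao:09}.

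First I would compute, via Plancherel and the explicit form of the spatial covariance $\|x\|^{-\beta}\leftrightarrow c\,\|\xi\|^{\beta-k}$, the increments of $v_1$. Writing $G_t(x)$ for the heat kernel, one has
\begin{equation*}
\E\bigl[(v_1(t,x)-v_1(s,y))^2\bigr]
 = c \int_0^{t\vee s} dr \int_{\R^k} d\xi\;
   \bigl|\hat G_{t-r}(\xi)e^{i\xi\cdot x}-\hat G_{s-r}(\xi)e^{i\xi\cdot y}\bigr|^2
   \|\xi\|^{\beta-k}.
\end{equation*}
Separating the temporal and spatial increments and scaling in $\xi$ yields, for $(s,y),(t,x)$ in a fixed compact subrectangle of $(0,T]\times\R^k$,
\begin{equation*}
\E\bigl[(v_1(t,x)-v_1(s,y))^2\bigr]\;\asymp\;|t-s|^{(2-\beta)/2}+\|x-y\|^{2-\beta}.
\end{equation*}
Consequently, the canonical metric is equivalent to the anisotropic metric
$\delta((s,y),(t,x))=|t-s|^{H_0}+\|x-y\|^{H_1}$ with $H_0=(2-\beta)/4$ along the time direction and $H_1=\cdots=H_k=(2-\beta)/2$ along each spatial direction. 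In the notation of \cite{Xiao:09}, the associated index is
\begin{equation*}
Q=\sum_{j=0}^{k}\frac{1}{H_j}=\frac{4}{2-\beta}+\frac{2k}{2-\beta}=\frac{4+2k}{2-\beta},
\end{equation*}
which is precisely the exponent in the statement.

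To apply \cite[Theorem 7.6]{Xiao:09}, I also need to verify the strong local non-determinism and non-degeneracy of $v_1$ on the compact parameter set $I\times J$. Local non-determinism follows from the spectral representation together with the fact that $G_t$ has a strictly positive spectral density on any compact time window; one shows that, for any finite collection of points in $I\times J$, the conditional variance of $v_1$ at one point given the others is bounded below by a constant multiple of $\min_j\delta((t,x),(t_j,x_j))^2$. The non-degeneracy of $v_1(t,x)$ at a single point reduces to the strict positivity of the integral in the variance formula above, which holds for all $(t,x)\in I\times J$. With these properties, Xiao's theorem provides both the lower bound in terms of $\mathrm{Cap}_{d-Q}(A)$ and the upper bound in terms of $\mathcal{H}_{d-Q}(A)$.

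The main obstacle is the verification of the sharp (rather than merely approximate) canonical-metric equivalence and of the strong local non-determinism: the anisotropy in time versus space, together with the fact that the process is only defined for $t>0$ and is not translation invariant in $t$, means the standard stationary-increments arguments must be adapted on the compact rectangle $I\times J$. Once this is carried out, however, the hitting probability bounds follow immediately by invoking \cite[Theorem 7.6]{Xiao:09} with the exponents $(H_0,H_1,\dots,H_k)$ determined above.
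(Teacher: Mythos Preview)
Your proposal is correct and follows the same overall route as the paper: both apply \cite[Theorem 7.6]{Xiao:09} to the $(1+k,d)$-Gaussian field $v$ with anisotropic Hurst indices $H_0=(2-\beta)/4$ in time and $H_j=(2-\beta)/2$, $j=1,\dots,k$, in space, yielding $Q=\frac{4+2k}{2-\beta}$.

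The one place where the paper is more efficient is precisely the step you flag as ``the main obstacle.'' You propose to verify strong local non-determinism directly from the spectral representation. The paper avoids this entirely: in the framework of \cite{Xiao:09}, the relevant hypotheses are Conditions (C1) and (C2), and \cite[Remark 2.2]{Xiao:09} states that (C1) already implies (C2) whenever $(t,x)\mapsto \E[v_1(t,x)^2]$ is continuous with continuous partial derivatives on $I\times J$. Since $\E[v_1(t,x)^2]=C\,t^{(2-\beta)/2}$, this is immediate. Thus only (C1) needs checking, and (C1) is exactly your two-sided canonical-metric equivalence; the paper gets the upper bound from \cite{Sanz:00} and the lower bound from Proposition~\ref{holderoptimal}. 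So your direct attack on local non-determinism would work in principle, but it is unnecessary once you use this shortcut.
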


   Theorem \ref{t1'} is proved in Section \ref{sec2}. Comparing Theorems \ref{t1} and \ref{t1'}, we see that Theorem \ref{t1} is nearly optimal. 

In order to prove Theorem~\ref{t1}, we shall use techniques of Malliavin calculus in order to establish first the following result.
Let $p_{t,x}(z)$ denote the probability density function of the $\R^d$-valued random vector
$u(t,x) = (u_1(t,x), \ldots, u_d(t,x))$ and for $(s,y) \not= (t,x)$, let $p_{s,y; \, t,x}(z_1,z_2)$ denote the joint density function of the $\R^{2d}$-valued random vector
\begin{equation*}
(u(s,y), u(t,x)) = (u_1(s,y), \ldots, u_d(s,y), u_1(t,x), \ldots, u_d(t,x)).
\end{equation*}
The existence (and smoothness) of $p_{t,x}(\cdot)$ when $d=1$
follows from \cite[Theorem 2.1]{Marquez:01} and Lemma \ref{scalar}
(see also \cite[Theorem 6.2]{Nualart:07}). The extension of this
fact to $d \geq 1$ is proved in Proposition \ref{unif}. The
existence (and smoothness) of $p_{s,y; \, t,x}(\cdot, \cdot)$ is a
consequence of Theorem \ref{ga} and \cite[Thm.2.1.2 and
Cor.2.1.2]{Nualart:95}. \vskip 12pt The main technical effort in
this paper is the proof of the following theorem.
\begin{thm} \label{t2}
Assume \textnormal{{\bf P1}} and \textnormal{{\bf P2}}. Fix $T>0$
and let $I \times J \subset (0,T] \times \mathbb{R}^k$ be a closed
non-trivial rectangle.
\begin{itemize}
\item[\textnormal{(a)}] The density $p_{t,x}(z)$ is a $C^\infty$ function of $z$ and is uniformly bounded
over $z \in \R^d$ and $(t,x)\in I \times J$.

\item[\textnormal{(b)}] For all $\eta>0$ and $\gamma \in (0,2-\beta)$, there exists $c>0$ such that for any $(s,y), (t,x) \in I \times J$, $(s,y) \neq (t,x)$, $z_1,z_2 \in \mathbb{R}^d$, and $p\geq 1$,
\begin{equation}\label{eq1.9}
p_{s,y;\,t,x}(z_1,z_2) \leq c (|t-s|^{\frac{2-\beta}{2}}+\Vert x-y
\Vert^{2-\beta})^{-(d+\eta)/2}
\biggl[\frac{|t-s|^{\gamma/2}+\Vert x-y \Vert^{\gamma}}{\Vert z_1-z_2
\Vert^2}\, \wedge 1 \biggr]^{p/(2d)}.
\end{equation}
\end{itemize}
\end{thm}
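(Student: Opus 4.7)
The strategy is to apply the Malliavin-calculus density formulas of Nualart \cite{Nualart:95} to $u(t,x)$ for part~(a) and, for part~(b), to the linearly transformed vector $F := (u(s,y),\, u(t,x)-u(s,y))$, using the identity $p_{s,y;t,x}(z_1,z_2) = p_F(z_1, z_2-z_1)$. For a smooth, non-degenerate $\R^m$-valued Malliavin functional $G$, these formulas bound $p_G(z)$ by products of $\|(\det \gamma_G)^{-1}\|_{L^p}^a$ with Sobolev-Malliavin norms of $G$, and can be sharpened via Malliavin integration-by-parts against indicator-type test functions to yield small-ball refinements.

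For part~(a), the two ingredients are the uniform Sobolev-Malliavin bounds $\sup_{(t,x)\in I\times J}\|u(t,x)\|_{k,p}<\infty$ (a consequence of \textbf{P1} and standard SPDE estimates) and uniform inverse moments $\sup_{(t,x)\in I\times J}\|(\det\gamma_{u(t,x)})^{-1}\|_{L^p}<\infty$. For the latter, I would localize the Malliavin derivative of $u(t,x)$ to a small window $[t-\epsilon,t]$ and use \textbf{P2} together with the spectral identity
\begin{equation*}
\int_0^\epsilon ds \int_{\R^k} |\cF G(s,\cdot)(\xi)|^2\,\|\xi\|^{\beta-k}\, d\xi \asymp \epsilon^{(2-\beta)/2}
\end{equation*}
for the heat kernel $G$, to deduce $\xi^{\sf T}\gamma_{u(t,x)}\xi \ge c\rho^2\epsilon^{(2-\beta)/2}$ off a small-probability set; inverse-moment bounds then follow by a classical argument (cf.\ \cite{Nualart:07,Marquez:01}), and feeding this into the density formula gives the uniform $C^\infty$-boundedness claimed in (a).

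For part~(b), set $\Delta := |t-s|^{(2-\beta)/2} + \|x-y\|^{2-\beta}$ and $D := |t-s|^{1/2} + \|x-y\|$, so $\Delta \asymp D^{2-\beta}$. The two analytic inputs are the scaled non-degeneracy
\begin{equation*}
\E\bigl[(\det\gamma_F)^{-p}\bigr] \le c\,\Delta^{-pd}, \qquad p\ge 1,
\end{equation*}
and the increment estimate $\|u(t,x)-u(s,y)\|_{k,p} \le c\,\Delta^{1/2}$. The Malliavin-matrix bound is obtained by a block/Schur-complement decomposition: the uniform non-degeneracy of $\gamma_{u(s,y)}$ from part~(a) reduces the estimate to a lower bound on the Malliavin matrix of the increment $Z := u(t,x) - u(s,y)$ measured orthogonally to $Du(s,y)$. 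Splitting the Malliavin derivative of $Z$ along $[0, s\vee(t-\epsilon)]$ and $[s\vee(t-\epsilon), t]$ with $\epsilon\asymp\min(|t-s|, \|x-y\|^2)$, the latter interval contributes the required $\Delta$-scaling for every coordinate via the same spectral estimate.

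Inserting these inputs into the Malliavin density formula yields, after optimization of integrability exponents, the unconditional bound $p_F(z_1, w) \le c\,\Delta^{-(d+\eta)/2}$, which is the first factor in \eqref{eq1.9}. To recover the second factor, I would split the density representation with an indicator,
\begin{equation*}
p_F(z_1, w) \le \E\bigl[|H|\,\mathbf{1}_{\{\|Z-w\|\le \|w\|/2\}}\bigr] + \E\bigl[|H|\,\mathbf{1}_{\{\|Z-w\|>\|w\|/2\}}\bigr],
\end{equation*}
bounding the first via H\"older and the small-ball estimate $\P(\|Z - w\|\le \|w\|/2) \le c\|w\|^d\Delta^{-d/2}$ (obtained by applying the unconditional bound to the density of $Z$), and the second via Chebyshev of high order $p$ combined with the increment estimate $\|Z\|_{L^p}\le c\Delta^{1/2}$. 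Interpolating these with the unconditional bound and optimizing then produces the factor $[(|t-s|^{\gamma/2}+\|x-y\|^\gamma)/\|z_1-z_2\|^2]^{p/(2d)}\wedge 1$; the gap between $2-\beta$ and $\gamma$ is precisely what absorbs the $\eta$ loss in the first factor. The main obstacle will be the scaled non-degeneracy of $\gamma_F$: because $\sigma$ is non-constant, $u(s,y)$ and $Z$ are not independent, so the decoupling between $u(s,y)$ and the $[s\vee(t-\epsilon),t]$-part of $DZ$ is only approximate and must be quantified via a Malliavin-Sobolev perturbation argument, while simultaneously handling the two distinct small parameters $|t-s|$ and $\|x-y\|$ to extract the combined scale $\Delta$ rather than either one separately.
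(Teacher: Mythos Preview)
Your overall architecture is correct and matches the paper's: reduce (a) to uniform inverse moments of $\det\gamma_{u(t,x)}$ via a localisation on $[t-\epsilon,t]$ plus the heat-kernel spectral estimate, and for (b) pass to $F=(u(s,y),u(t,x)-u(s,y))$ and combine a bound on $\Vert H_{(1,\dots,2d)}(F,1)\Vert_{0,2}$ with a tail probability for the increment. But two of your ``analytic inputs'' are overstated, and this matters because they are exactly where the parameters $\eta$ and $\gamma$ in \eqref{eq1.9} come from.

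First, the scaled non-degeneracy $\E[(\det\gamma_F)^{-p}]\le c\,\Delta^{-pd}$ is sharper than what is actually provable here: the paper obtains only $\E[(\det\gamma_F)^{-p}]^{1/p}\le C\,\Delta^{-d(1+\eta)}$ (Proposition~\ref{3p3}), and this is the main technical effort of the paper. Your Schur-complement sketch with a single scale $\epsilon\asymp\min(|t-s|,\Vert x-y\Vert^2)$ will not deliver even this weaker bound. The difficulty is that when $\Vert x-y\Vert^2\ge t-s$, localising the derivative of the increment on $[t-\epsilon,t]$ with $\epsilon\le t-s$ sees only the time variable, while taking $\epsilon>t-s$ forces you to control the cross term $\langle S(s-r,y-\cdot),S(t-r,x-\cdot)\rangle_{\mathcal H}$ between the two heat kernels, which is \emph{not} small in general; the paper handles this via an eigenvector-perturbation argument (Propositions~\ref{smalle}--\ref{large}) with separate case analyses for $t-s\gtrless\Vert x-y\Vert^2$ and a further split into $\Vert\mu\Vert\gtrless\epsilon^{\eta/2}$, and it is precisely this last split that introduces the $\eta$ loss. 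Second, the increment estimate $\Vert u(t,x)-u(s,y)\Vert_{k,p}\le c\,\Delta^{1/2}$ at the endpoint $\gamma=2-\beta$ is not available: only $\gamma<2-\beta$ is proved (Proposition~\ref{derivat}, via the factorisation method), and this strict inequality is what forces the parameter $\gamma$ in the second factor of \eqref{eq1.9}. So your sentence ``the gap between $2-\beta$ and $\gamma$ is precisely what absorbs the $\eta$ loss'' inverts the logic: the $\eta$ and the $\gamma$ are two independent losses coming from the determinant bound and the H\"older estimate respectively; neither absorbs the other.

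Finally, for the second factor the paper's route is shorter than your indicator splitting: Corollary~\ref{no} (the density formula of \cite{Nualart:98}) already delivers
\[
p_F(z_1,z_2-z_1)\le \prod_{i=1}^d \P\{|u_i(t,x)-u_i(s,y)|>|z_1^i-z_2^i|\}^{1/(2d)}\,\Vert H_{(1,\dots,2d)}(F,1)\Vert_{0,2},
\]
so one Chebyshev on the coordinate of largest $|z_1^i-z_2^i|$ plus the $\gamma$-H\"older bound (\ref{equa3}) gives the bracket in \eqref{eq1.9} directly; no small-ball estimate for $Z$ is needed.
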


Statement (a) of this theorem is proved at the end of Section \ref{sec4}, and statement (b) is proved in Section \ref{sec53}. 

\begin{remark} \label{remexp}
   (a) Theorem \ref{t2}(a) remains valid under a slightly weaker version of \textnormal{{\bf P1}}, in which the $\sigma_{i,j}$ need not be bounded (but their derivatives of all positive orders are bounded). 

   (b) The last factor on the right-hand side of \eqref{eq1.9} is similar to the one obtained in \cite[Remark 3.1]{Dalang:11}, while in the papers \cite{Dalang:05,Dalang2:05}, which concern spatial dimension $1$, it was replaced by
$$
\exp \biggl( -\frac{\Vert z_1-z_2
\Vert^2}{c (|t-s|^{\gamma/2}+\Vert x-y \Vert^{\gamma})}\biggr).
$$
This exponential factor was obtained by first proving this bound in the case where $b_i \equiv 0$, $i=1,\dots,d$, and then using Girsanov's theorem. In the case of higher spatial dimensions that we consider here, we can obtain this same bound when $b_i \equiv 0$, $i=1,\dots,d$ 
(see Lemma \ref{exp} in Section \ref{sec53}). Since there is no applicable Girsanov's theorem in higher spatial dimensions and for equations on all of $\R^d$, we establish \eqref{eq1.9} and, following \cite{Dalang:11}, show in Section \ref{sec23} that this estimate is sufficient for our purposes.
\end{remark}

One further fact about $p_{t,x}(\cdot)$ that we will need is provided by the following recent result of {\sc E.~Nualart} \cite{Nualart:09}.
\begin{thm}\label{te2}
Assume \textnormal{{\bf P1}} and \textnormal{{\bf P2}}. Fix $T>0$ and let $I \times J \subset (0,T] \times \mathbb{R}^k$ be a closed non-trivial rectangle. Then for all $z \in \R^d$ and $(t,x)\in (0,T] \times \R^k$, the density $p_{t,x}(z)$ is strictly positive.
\end{thm}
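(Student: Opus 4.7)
The plan is to combine smoothness of the density $p_{t,x}(\cdot)$ (already granted by Theorem~\ref{t2}(a)) with a support theorem showing that the topological support of the law of $u(t,x)$ is all of $\R^d$. Since a continuous density is strictly positive throughout the interior of its support, these two facts together yield $p_{t,x}(z) > 0$ for every $z \in \R^d$.

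First I would establish a Stroock-Varadhan-type support theorem for the random vector $u(t,x)$. Writing $\mathcal{H}_F$ for the Cameron-Martin space of the spatially homogeneous noise $F$, and, for each $h \in \mathcal{H}_F$, letting $\Phi^h(t,x) \in \R^d$ denote the value at $(t,x)$ of the solution to the deterministic skeleton equation obtained from (\ref{equa1}) by replacing $\dot{F}$ with $h$, the Bally-Millet-Sanz-Sol\'e approach (adapted to Riesz-kernel noise as has been carried out for the stochastic heat and wave equations in related works) identifies the topological support of the law of $u(t,x)$ with the closure in $\R^d$ of the set $\{\Phi^h(t,x) : h \in \mathcal{H}_F\}$. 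This step relies on the standard Malliavin-Wong-Zakai machinery plus the regularity of the coefficients granted by \textbf{P1}.

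Next I would show that $\{\Phi^h(t,x) : h \in \mathcal{H}_F\}$ is in fact all of $\R^d$. The key ingredient is the strong ellipticity condition \textbf{P2}: since $\sigma \sigma^{\sf T}$ is uniformly positive definite, $\sigma$ is invertible, and one can steer the skeleton to any target $z \in \R^d$ by solving, locally near $(t,x)$, an inverse-problem-type equation of the schematic form $\int G_{t-s}(x-y)\,\sigma(\Phi^h(s,y)) \cdot h(s,y)\, ds\, dy \approx z$, using the fact that the heat kernel $G$ is strictly positive so that admissible perturbations of the control propagate to $(t,x)$. Reachability of arbitrary targets, combined with continuity of $p_{t,x}$, then forces strict positivity at every $z$.

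The main obstacle is the reachability step: the skeleton equation is nonlinear and infinite-dimensional, and constructing $h$ explicitly requires care. An attractive alternative, likely closer to what is carried out in \cite{Nualart:09}, is to bypass the explicit support theorem and apply a direct Malliavin-calculus positivity criterion in the spirit of Bally-Pardoux or Nualart-Quer-Sardanyons: given smoothness of the density together with the non-degeneracy of the Malliavin matrix of $u(t,x)$ (both already consequences of \textbf{P1} and \textbf{P2}, as used in Theorem~\ref{t2}(a)), and the fact that one can perturb the driving noise to move $u(t,x)$ in any prescribed direction of $\R^d$, one deduces $p_{t,x}(z) > 0$ for all $z \in \R^d$ without passing through a full Stroock-Varadhan-type statement.
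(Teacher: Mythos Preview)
The paper does not supply its own proof of this theorem: it is simply quoted as a result of \textsc{E.~Nualart}~\cite{Nualart:09}, so there is no argument in the present paper to compare against. Your second suggestion --- a direct Malliavin-calculus positivity criterion of Bally--Pardoux/Fournier type, combining nondegeneracy of $\gamma_{u(t,x)}$ with a controllability-in-every-direction statement for the skeleton --- is indeed the line of argument followed in~\cite{Nualart:09}, and the references~\cite{Bally:98,Fournier:99} in the bibliography point in that direction.

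Your first route, however, has a genuine gap. The assertion ``a continuous density is strictly positive throughout the interior of its support'' is false: for instance, the smooth density proportional to $\Vert z\Vert^{2}e^{-\Vert z\Vert^{2}}$ has support $\R^{d}$ but vanishes at the origin. So even a full Stroock--Varadhan support theorem for $u(t,x)$ together with Theorem~\ref{t2}(a) would not, by itself, yield $p_{t,x}(z)>0$ for every $z$. What one actually needs is not merely that $z$ lies in the closure of $\{\Phi^{h}(t,x):h\in\mathcal{H}_{F}\}$, but that the skeleton map $h\mapsto\Phi^{h}(t,x)$ is a \emph{submersion} at some $h_{0}$ with $\Phi^{h_{0}}(t,x)=z$; equivalently, that the deterministic Malliavin covariance of the skeleton is invertible there. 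This is exactly the extra input your final paragraph alludes to (``perturb the driving noise to move $u(t,x)$ in any prescribed direction''), and it is what the ellipticity hypothesis~\textbf{P2} delivers. Once you have that, the positivity follows from the standard criterion (see e.g.~\cite[Section~4.2]{Nualart:95} or~\cite{Bally:98}), and the support-theorem step becomes superfluous.
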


\section{Proof of Theorems \ref{t1}, \ref{t1'} and Corollaries \ref{c33}, \ref{c3bis} (assuming Theorem \ref{t2})}\label{sec2}

We first define precisely the driving noise that appears in \eqref{equa1}. Let $\mathcal{D}(\R^{k})$ be the space of $C^\infty$ test-functions with compact support. Then $F=\{ F(\phi) =(F^1(\phi),\dots,F^d(\phi)),\ \phi \in \mathcal{D}(\R^{k+1})\}$
is an $L^2(\Omega, \mathcal{F}, \P)^d$-valued mean zero Gaussian process with covariance
\begin{equation*}
\E [F^i(\phi) F^j(\psi) ]
=\delta_{ij}\ \int_{\R_+} dr \int_{\R^k} dy \int_{\R^k} dz \, \phi(r,y) \Vert y-z \Vert^{-\beta} \psi(r,z).
\end{equation*}
Using elementary properties of the Fourier transform (see {\sc Dalang} \cite{Dalang:99}), this covariance can also be written as
\begin{equation*}
\E [F^i(\phi) F^j(\psi) ]
=\delta_{ij}\ c_{k,\beta} \ \int_{\R_+} dr \int_{\R^k} d\xi \,\Vert \xi \Vert^{\beta-k} \mathcal{F} \phi(r,\cdot)(\xi) \overline{\mathcal{F} \psi(r,\cdot) (\xi)},
\end{equation*}
where $c_{k,\beta}$ is a constant and $\mathcal{F} f(\cdot)(\xi)$ denotes the Fourier transform of $f$, that is,
\begin{equation*}
\mathcal{F} f(\cdot)(\xi)=\int_{\R^k} e^{- 2 \pi i \xi \cdot x} \, f(x) \, dx.
\end{equation*}

Since equation (\ref{equa1}) is formal, we first provide, following {\sc Walsh} \cite[p.289-290]{Walsh:86}, a rigorous formulation of \eqref{equa1} through the notion of {\em mild solution} as follows. Let $M=(M^1,...,M^d)$, $M^i=\{ M^i_t(A), t \geq 0, A \in \mathcal{B}_b(\R^k)\}$ be the
$d$-dimensional worthy martingale measure obtained as an extension of the process $\dot{F}$ as in {\sc Dalang and Frangos} \cite{DF98}. Then a {\em mild solution} of (\ref{equa1}) is a jointly measurable
$\R^d$-valued process $u=\{u(t,x),\ t\geq 0,\, x \in \R^k\}$, adapted to the natural filtration generated by $M$, such that
\begin{equation} \label{equa2} \begin{split}
u_i(t,x)&=\int_0^t \int_{\R^k} S(t-s,x-y) \sum_{j=1}^d \sigma_{i,j}(u(s,y)) M^j(ds, dy) \\
& \qquad\qquad+ \int_0^t ds \int_{\R^k} dy \, S(t-s,x-y) \, b_i(u(s,y)),\qquad i \in \{1,...,d\},
\end{split}
\end{equation}
where $S(t,x)$ is the fundamental solution of the deterministic heat equation in $\R^k$, that is,
\begin{equation*}
S(t,x)=(2\pi t)^{-k/2} \exp \biggl(-\frac{\Vert x \Vert^2}{2t} \biggr),
\end{equation*}
and the stochastic integral is interpreted in the sense of \cite{Walsh:86}. We note that the covariation measure of $M^i$ is
$$
   Q([0,t] \times A \times B) = \langle M^i(A), M^i(B) \rangle_t =
   t \int_{\R^k} dx \int_{\R^k} dy\, 1_A(x)\, \Vert x-y\Vert^{-\beta}\, 1_B(y),
$$
and its dominating measure is $K\equiv Q$. In particular,
\begin{align}\nonumber
   &E\left[\left(\int_0^t  \int_{\R^k} S(t-s,x-y)\, M^i(ds,dy)\right)^2 \right]\\ \nonumber
   &\qquad = \int_0^t ds  \int_{\R^k} dy  \int_{\R^k} dz\, S(t-s,x-y) \, \Vert y-z\Vert^{-\beta}\, S(t-s,x-z) \\
   &\qquad = c_{k,\beta} \int_0^t ds  \int_{\R^k} d\xi\, \Vert\xi\Vert^{\beta-k}\, \vert \cF S(t-s,\cdot)(\xi)\vert^2, 
\label{eqdom}
\end{align}
where we have used elementary properties of the Fourier transform (see also {\sc Dalang} \cite{Dalang:99}, {\sc Nualart and Quer-Sardanyons} \cite{Nualart:07}, and {\sc Dalang and Quer-Sardanyons} \cite{DQ10} for properties of the stochastic integral).
This last formula is convenient since
\begin{equation} \label{ft}
\cF S(r,\cdot) (\xi) = \exp(-2\pi^2\, r \Vert \xi \Vert^2).
\end{equation}

The existence and uniqueness of the solution of (\ref{equa1}) is studied in {\sc Dalang} \cite{Dalang:99} for general space correlation functions $f$
which are non-negative, non-negative definite and continuous on $\R^k \setminus \{0\}$ (in the case where $k=1$; for these properties, the extension to $k>1$ is straightforward). In particular, it is proved that if the {\em spectral measure} of $\dot F$, that is, the non-negative tempered measure $\mu$ on $\R^k$ such that $\cF \mu = f$, satisfies
\begin{equation} \label{equa33}
\int_{\R^k} \frac{\mu(d \xi) }{1+\Vert \xi \Vert^2} < +\infty,
\end{equation}
then there exists a unique solution of (\ref{equa1}) such that $(t,x) \mapsto u(t,x)$ is $L^2$-continuous, and condition \eqref{equa33} is also necessary for existence of a mild solution.

In the case of the noise (\ref{noise}), $f(x) = \Vert x \Vert^{-\beta}$ and $\mu(d \xi)= c_d \Vert \xi \Vert^{\beta-k}\, d \xi$, where $c_d$ is a constant (see {\sc Stein} \cite[Chap.V, Section 1, Lemma 2(b)]{Stein:70}), and the condition \eqref{equa33} is equivalent to
\begin{equation}
0 < \beta < (2 \wedge k).
\end{equation}
Therefore, by {\sc Dalang} \cite{Dalang:99}, there exists a unique $L^2$-continuous solution of (\ref{equa1}), satisfying
\begin{equation*}
\sup_{(t,x) \in [0,T] \times \R^k} \E \, \biggl[\vert u_i(t,x) \vert^p \biggr]< +\infty, \qquad i \in \{1,...,d\},
\end{equation*}
for any $T>0$ and $p \geq 1$.

\subsection{H\"older continuity of the solution}

Let $T>0$ be fixed. In {\sc Sanz-Sol\'e and Sarr\`a} \cite[Theorem 2.1]{Sanz:02} it is proved that for any $\gamma \in (0, 2-\beta)$, $s,t \in [0,T]$, $s \leq t$, $x, y \in \mathbb{R}^k$, $p>1$,
\begin{equation} \label{equa3}
\E \, [\Vert u(t,x)-u(s,y) \Vert^p] \leq C_{\gamma,p,T} (\vert t-s\vert^{\gamma/2}
+\Vert x-y\Vert^{\gamma})^{p/2}.
\end{equation}
In particular, the trajectories of $u$ are a.s.~$\gamma/4$-H\"older continuous in $t$ and $\gamma/2$-H\"older continuous in $x$.

\vskip 12pt
The next result shows that the estimate \eqref{equa3} is nearly optimal (the only possible improvement would be to include the value $\gamma = 2 - \beta$).

\begin{prop} \label{holderoptimal}
Let $v$ denote the solution of \textnormal{(\ref{equa1})} with $\sigma \equiv 1$ and $b \equiv 0$. Then for any $0< t_0 <T$, $p>1$ and $K$ a compact set, there exists $c_1=c_1(p,t_0,K)>0$ such that for any $t_0 \leq s \leq t \leq T$, $x, y \in K$, $i \in \{1,...,d\}$,
\begin{equation} \label{equa4}
\E \, [\vert v_i(t,x)-v_i(s,y) \vert^p] \geq c_1 \left(\vert t-s \vert^{\frac{2-\beta}{4}p}+ \Vert x-y\Vert^{\frac{2-\beta}{2}p}\right).
\end{equation}
\end{prop}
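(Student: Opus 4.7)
The plan is to exploit the Gaussianity of $v$. Since $\sigma \equiv 1$, $b \equiv 0$, and $v(0,\cdot)\equiv 0$, the mild formulation \eqref{equa2} is a linear stochastic integral, so $v$ is a centered Gaussian random field with independent components $v_1,\dots,v_d$. Every centered Gaussian $X$ satisfies $\E[|X|^p] = c_p(\E[X^2])^{p/2}$ for $p\ge 1$, and $(a+b)^{p/2}\ge \tfrac12(a^{p/2}+b^{p/2})$ for $a,b\ge 0$; hence the proposition reduces to showing, with $s\le t$ assumed WLOG,
\begin{equation*}
  \E\bigl[|v_i(t,x)-v_i(s,y)|^2\bigr] \;\ge\; c\bigl(|t-s|^{(2-\beta)/2}+\|x-y\|^{2-\beta}\bigr)
\end{equation*}
uniformly for $s,t\in[t_0,T]$ and $x,y\in K$.

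Using the mild formulation, I split $v_i(t,x)-v_i(s,y) = I_1 + I_2$ with $I_2 = \int_s^t \int S(t-r,x-z)\,M^i(dr,dz)$ and $I_1 = \int_0^s\int[S(t-r,x-z)-S(s-r,y-z)]\,M^i(dr,dz)$. By orthogonality of $M^i$ on disjoint time intervals, $\E[|v_i(t,x)-v_i(s,y)|^2] = \E[I_1^2]+\E[I_2^2]$. Applying the Fourier identity \eqref{eqdom} with \eqref{ft} and the scaling substitution $\xi\mapsto \xi/\sqrt{t-r}$ yields $\E[I_2^2] = C_1(t-s)^{(2-\beta)/2}$ for an explicit constant $C_1>0$, which handles the temporal half of the desired bound.

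For the spatial half, factoring out $e^{-2\pi i y\cdot\xi}$ in the Fourier representation of $I_1$ and using $|e^{-ia}e^{-b}-1|^2 \ge e^{-2b}\sin^2 a$ gives, after integration in $r\in[0,s]$,
\begin{equation*}
  \E[I_1^2] \;\ge\; C_2 \int_{\R^k} \|\xi\|^{\beta-k-2}\sin^2(2\pi h\cdot\xi)\, e^{-4\pi^2(t-s)\|\xi\|^2}\bigl(1-e^{-4\pi^2 s\|\xi\|^2}\bigr)\,d\xi,
\end{equation*}
with $h=x-y$. I then split into two regimes. If $|t-s|\ge \|h\|^2$ then $|t-s|^{(2-\beta)/2}\ge \|h\|^{2-\beta}$, so $\E[I_2^2]\ge C_1 |t-s|^{(2-\beta)/2}$ alone controls the full right-hand side. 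If $|t-s|<\|h\|^2$, the substitution $\xi = \eta/\|h\|$ converts the above integral into $\|h\|^{2-\beta}$ times an integral whose exponential factors are uniformly controlled: $(t-s)/\|h\|^2 < 1$ and $s/\|h\|^2 \ge t_0/D^2$, where $D$ is the diameter of $K$. The resulting integral is bounded below by a strictly positive constant, independent of $\hat h = h/\|h\|$ by rotational invariance (its integrability at the origin is automatic from $1-e^{-c\|\eta\|^2}=O(\|\eta\|^2)$ and $\sin^2(2\pi\hat h\cdot\eta)=O(\|\eta\|^2)$, and at infinity from the $e^{-4\pi^2\|\eta\|^2}$ factor). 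Hence $\E[I_1^2]\ge C_3\|h\|^{2-\beta}$ in this regime.

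The principal technical point is this two-regime analysis: for a general increment the Fourier lower bound for $\E[I_1^2]$ carries the penalty $e^{-4\pi^2(t-s)\|\xi\|^2}$, which prevents extracting $\|h\|^{2-\beta}$ uniformly in $|t-s|$, so the time-only contribution $\E[I_2^2]$ must be used to cover the complementary regime. Combining both regimes and reconverting via Gaussianity yields \eqref{equa4}.
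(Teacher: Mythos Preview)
Your proof is correct and follows essentially the same route as the paper: reduce to $p=2$ by Gaussianity, split the variance into the contribution over $[s,t]$ and the contribution over $[0,s]$ via the Fourier representation \eqref{eqdom}--\eqref{ft}, and treat the two regimes $|t-s|\gtrless\|x-y\|^2$ separately, using the scaling substitution $\xi\mapsto\xi/\|h\|$ in the spatial regime and the constraints $s\ge t_0$, $\|h\|\le\mathrm{diam}(K)$ to obtain a uniform positive lower bound. The only difference is the elementary inequality used in the spatial regime: the paper applies $|1-re^{i\theta}|\ge\tfrac12|1-e^{i\theta}|$ to drop the time-exponential factor outright, whereas you use $|e^{-b}e^{-ia}-1|^2\ge e^{-2b}\sin^2 a$ and then control the surviving factor $e^{-4\pi^2(t-s)\|\xi\|^2/\|h\|^2}$ via the regime constraint $(t-s)/\|h\|^2<1$; both work and the resulting constants differ only cosmetically.
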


\begin{proof}
Since $v$ is Gaussian, it suffices to check (\ref{equa4}) for $p=2$. Setting $t=s+h$ and $x=y+z$, we observe from \eqref{eqdom} that
$$
   \E \, [\vert v_i(s+h,y+z)-v_i(s,y) \vert^2] = c_{k,\beta}\, (I_1 + I_2),
$$
where
\begin{align*}
  I_1 &=\int_s^{s+h} dr \int_{\R^k} d\xi \,\Vert \xi \Vert^{\beta-k}\, \,\vert \mathcal{F} S(s+h-r,\cdot)(\xi)\vert^2 \\
  I_2 &= \int_0^{s} dr \int_{\R^k} d\xi \,\Vert \xi \Vert^{\beta-k}\, \,\vert \mathcal{F} S(s+h-r,\cdot)(\xi)\, e^{-2\pi i \xi \cdot(y+z)}- \mathcal{F} S(s-r,\cdot)(\xi)\, e^{-2\pi i \xi \cdot y}\vert^2.
\end{align*}
\vskip 12pt

\noindent{\em Case 1.} $h \geq \Vert z \Vert^2$. In this case, we notice from \eqref{ft} that
\begin{align*}
   I_1 + I_2 \geq I_1 &= \int_s^{s+h} dr \int_{\R^k} d \xi \, \Vert \xi \Vert^{\beta-k}
\exp(-4 \pi^2 (s+h-r) \Vert \xi \Vert^2 ) \\ \nonumber
& = \int_{\R^k} d \xi \, \Vert \xi \Vert^{\beta-k} \biggl(\frac{1-\exp(-4 \pi^2 h \Vert \xi \Vert^2)}{4 \pi^2\Vert \xi \Vert^2}\biggr).
\end{align*}
We now use the change of variables $\tilde{\xi}=h^{1/2} \xi$ to see that the last right-hand side is equal to
\begin{equation*}
h^{\frac{2-\beta}{2}} \int_{\R^k} d \tilde{\xi} \, \Vert \tilde{\xi} \Vert^{\beta-k} \biggl(\frac{1-\exp(-4 \pi^2 \Vert \tilde{\xi} \Vert^2)}{4 \pi^2\Vert \tilde{\xi} \Vert^2}\biggr).
\end{equation*}
Note that the last integral is positive and finite. Therefore, when $h \geq \Vert z \Vert^2$,
$$
   \E \, [\vert v_i(t+h,y+z)-v_i(s,y) \vert^2] \geq c \left(\max\left(h,\Vert z \Vert^2 \right)\right)^{\frac{2-\beta}{2}}.
$$

\noindent{\em Case 2.} $\Vert z \Vert^2 \geq h$. In this case, we notice that
\begin{align*}
  I_1 + I_2 &\geq I_2 \\
  &= \int_0^s dr \int_{\R^k} d \xi \, \Vert \xi \Vert^{\beta-k} \exp(-4 \pi^2 (s-r) \Vert \xi \Vert^2 )\, \left\vert 1- \exp(-4\pi^2 h \Vert \xi \Vert^2)\exp(-2 \pi i\, \xi \cdot z )\right\vert^2.
\end{align*}
We use the elementary inequality $\vert 1 - re^{i\theta}\vert \geq \frac{1}{2} \vert 1 - e^{i\theta}\vert$, valid for all $r \in [0,1]$ and $\theta \in \R$, and we calculate the $dr$-integral, to see that
$$ 
   I_2 \geq \int_{\R^k} d \xi \, \Vert \xi \Vert^{\beta-k} \biggl(\frac{1-\exp(-4 \pi^2 s \Vert \xi \Vert^2)}{4 \pi^2 \Vert \xi \Vert^2}\biggr)
\, \vert 1-\exp(-2 \pi i\, \xi \cdot z) \vert^2.
$$
Because $z \in K - K$ and $K$ is compact, fix $M>0$ such that $\Vert z \Vert \leq M$.
When $ z \neq 0$, we use the change of variables $\tilde{\xi}=\Vert z \Vert \xi$ and write $e=z / \Vert z \Vert$ to see that the last right-hand side is equal to
\begin{align*}
& c \Vert z \Vert^{2-\beta} \int_{\R^k} d \tilde{\xi} \, \Vert \tilde{\xi} \Vert^{\beta-k-2} \left(1-\exp\left(-4 \pi^2 t \Vert \tilde{\xi} \Vert^2/\Vert z \Vert^2 \right)\right)\, \vert 1-\exp(-2 \pi i\, \tilde{\xi} \cdot e) \vert^2 \\
&\qquad\geq c \Vert z \Vert^{2-\beta} \int_{\R^k} d \tilde{\xi} \, \Vert \tilde{\xi} \Vert^{\beta-k-2}
\left(1-\exp \left(-4 \pi^2 t_0\Vert \tilde{\xi} \Vert^2/M^2 \right)\right)\,
\vert 1-\exp(-2 \pi i\, \tilde{\xi} \cdot e) \vert^2.
\end{align*}
The last integral is a positive constant. Therefore, when $\Vert z \Vert^2 \geq h$,
$$
 \E \, [\vert v_i(t+h,y+z)-v_i(s,y) \vert^2] \geq c \left(\max\left(h,\Vert z \Vert^2 \right)\right)^{\frac{2-\beta}{2}}.
$$

   Cases 1 and 2 together establish (\ref{equa4}).
\end{proof}

\subsection{Proof of Theorem \ref{t1'}}

Under the hypotheses on $b$ and $\sigma$, the components of $v=(v_1,\dots,v_d)$ are independent, so $v$ is a $(1+k,d)$-Gaussian random field in the sense of \cite{Xiao:09}. We apply Theorem 7.6 in \cite{Xiao:09}. For this, we are going to verify Conditions (C1) and (C2) of \cite[Section 2.4, p.158]{Xiao:09} with $N=k+1$, $H_1=(2-\beta)/4$, $H_j=(2-\beta)/2$, $j=1,...,k$.

   In particular, for (C1), we must check that there are positive constants $c_1,\dots,c_4$ such that for all $(t,x)$ and $(s,y)$ in $I\times J$,
\begin{equation}\label{rdC11}
    c_1 \leq E(v_1(t,x)^2) \leq c_2,
\end{equation}
and
\begin{equation}\label{rdC12}
   c_3\left(\vert t-s\vert^{\frac{2-\beta}{2}} + \Vert x-y\Vert^{2-\beta} \right) \leq E[(v_1(t,x) - v_1(s,y))^2] \leq c_4 \left(\vert t-s\vert^{\frac{2-\beta}{2}} + \Vert x-y\Vert^{2-\beta} \right).
\end{equation}

Condition \eqref{rdC11} is satisfied because $\E[v_1(t,x)^2]=C t^{(2-\beta)/2}$ (see \eqref{eqdom}, \eqref{ft} and Lemma \ref{scalar}).
The lower bound of \eqref{rdC12} follows from Proposition 2.1. The upper bound is a consequence of \cite[Propositions 2.4 and 3.2]{Sanz:00}.

Finally, in order to establish Condition (C2) it suffices to apply the fourth point of Remark 2.2 in \cite{Xiao:09}. Indeed, it is stated there that Condition (C1) implies condition (C2) when $(t,x) \mapsto E[v_1(t,x)^2] = C t^{(2-\beta)/2}$ is continuous in $I\times J$ with continuous partial derivatives, and this is clearly the case.

   This completes the proof of Theorem \ref{t1'}.
\hfill $\Box$
\vskip 16pt

\subsection{Proof of Theorem \ref{t1}(a)}

Fix $T>0$ and let $I \times J \subset (0,T] \times \mathbb{R}^k$ be a closed non-trivial rectangle. Let $\gamma \in (0, 2-\beta)$.
For all positive integers $n$, $i \in \{0,...,n\}$ and $j=(j_1,...,j_k) \in \{0,...,n \}^k$,
set $t_i^n=i 2^{-\frac{4n}{\gamma}}$, $x_j^n=(x_{j_1}^n=j_1 2^{-\frac{2n}{\gamma}},...,x_{j_k}^n=j_k 2^{-\frac{2n}{\gamma}})$, and
\begin{equation*}
I^n_{i,j}=[t_i^n \,,t_{i+1}^n] \times [x_{j_1}^n \,,x_{j_1+1}^n] \times \cdots \times [x_{j_k}^n \,,x_{j_k+1}^n].
\end{equation*}

The proof of the following lemma uses Theorem \ref{t2}(a) and (\ref{equa3}), but follows along the same lines as \cite[Theorem 3.3]{Dalang:05} with ${\bf \Delta}((t,x);(s,y))$ there replaced by $\vert t-s\vert^{\gamma/2} + \Vert x-y \Vert^\gamma$, $\beta$ there replaced by $d - \eta$ and $\epsilon$ in Condition (3.2) there replaced by $2^{-n}$. It is therefore omitted.

\begin{lem} \label{ball}
Fix $\eta>0$ and $M>0$. Then there exists $c>0$ such that for all $z \in [-M, M]^d$,
$n$ large and $I^n_{i,j} \subset I\times J$,
\begin{equation*}
P \{ u(I_{i,j}^n) \cap B(z\,,2^{-n}) \neq \varnothing\} \le c 2^{-n(d-\eta)}.
\end{equation*}
\end{lem}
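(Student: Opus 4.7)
The plan is to reduce the event that $u$ enters $B(z, 2^{-n})$ somewhere on $I^n_{i,j}$ to the event that $u$ hits a slightly larger ball at the fixed corner $(t^n_i, x^n_j)$, at the price of controlling the oscillation of $u$ across the dyadic rectangle. With $\lambda > 0$ a parameter to be chosen as a small power of $2^n$, the triangle inequality yields the decomposition
$$
\{u(I^n_{i,j}) \cap B(z, 2^{-n}) \neq \varnothing\} \subseteq \{u(t^n_i, x^n_j) \in B(z, (1+\lambda) 2^{-n})\} \cup \Bigl\{\sup_{(t,x) \in I^n_{i,j}} \Vert u(t,x) - u(t^n_i, x^n_j) \Vert > \lambda 2^{-n}\Bigr\}.
$$
Indeed, if the oscillation event fails and some $u(t,x)$ hits $B(z, 2^{-n})$, then $u(t^n_i, x^n_j)$ must lie within $(1+\lambda) 2^{-n}$ of $z$.

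For the first event I would invoke Theorem \ref{t2}(a), which supplies a uniform bound on the density of $u(t^n_i, x^n_j)$ over $(t^n_i, x^n_j) \in I \times J$; consequently its probability is at most $C ((1+\lambda) 2^{-n})^d$. Setting $\lambda = 2^{n\eta/(2d)}$ makes this bound $\leq C \cdot 2^{-n(d-\eta/2)}$, which is strictly smaller than the target $2^{-n(d-\eta)}$.

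For the oscillation event, the calibration of the side-lengths of $I^n_{i,j}$ is exactly designed so that the $L^p$-norm of an increment within $I^n_{i,j}$ is of order $2^{-n}$: from $\vert t-s\vert \leq 2^{-4n/\gamma}$ and $\Vert x-y\Vert \leq C 2^{-2n/\gamma}$ one gets $\vert t-s\vert^{\gamma/2} + \Vert x-y\Vert^{\gamma} \leq C 2^{-2n}$, so \eqref{equa3} yields $\E[\Vert u(t,x)-u(s,y)\Vert^p] \leq C_p 2^{-np}$ uniformly in $(t,x),(s,y) \in I^n_{i,j}$. A standard Kolmogorov--Garsia--Rodemich--Rumsey chaining in the parabolic pseudo-metric $\Delta((t,x),(s,y)) = \vert t-s\vert^{\gamma/2} + \Vert x-y\Vert^{\gamma}$ (whose Lebesgue volume exponent on $\R^{1+k}$ is the fixed number $(2+k)/\gamma$) then promotes this pointwise bound to
$$
\E\Bigl[\sup_{(t,x), (s,y) \in I^n_{i,j}} \Vert u(t,x) - u(s,y) \Vert^p\Bigr] \leq C_p \cdot 2^{-np(1-\epsilon_p)},
$$
where $\epsilon_p \to 0$ as $p \to \infty$ and $C_p$ does not depend on $n, i, j$. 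A Chebyshev bound with our choice of $\lambda$ then controls the probability of the oscillation event by $C_p 2^{np(\epsilon_p - \eta/(2d))}$, which for $p$ chosen so that $\epsilon_p < \eta/(4d)$ and $p\eta/(4d) > d$ is dominated by $2^{-nd}$, finishing the estimate.

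The main obstacle I anticipate is ensuring that the $L^p$ sup estimate in the third paragraph is uniform across scales $n$ and across all dyadic rectangles $I^n_{i,j} \subseteq I\times J$. This uniformity is precisely what the scaling in the construction of $I^n_{i,j}$ is set up to guarantee: after dilating by $2^{2n/\gamma}$ in space and $2^{4n/\gamma}$ in time, every rectangle becomes a unit block in the $\Delta$-pseudo-metric, and the increment bound \eqref{equa3} becomes scale-invariant at the level of $L^p$ norms, so the chaining constant $C_p$ and the loss exponent $\epsilon_p$ can indeed be taken independent of $n,i,j$. Once this bookkeeping is performed, the rest of the argument is routine and parallels \cite[Theorem 3.3]{Dalang:05} with the substitutions of the metric and exponents indicated in the paragraph preceding the lemma.
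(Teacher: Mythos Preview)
Your proposal is correct and follows essentially the same approach as the paper: the paper omits the proof and simply refers to \cite[Theorem 3.3]{Dalang:05}, whose argument is precisely the corner-plus-oscillation decomposition you outline, with the density bound coming from Theorem \ref{t2}(a) and the oscillation controlled via the increment estimate \eqref{equa3} together with a Kolmogorov-type chaining in the anisotropic pseudo-metric. Your bookkeeping of the scaling, the choice of $\lambda$, and the loss exponent $\epsilon_p \to 0$ are all in line with that reference.
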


\begin{proof}[Proof of the upper bound in Theorem \ref{t1}]
Fix $\e \in (0,1)$ and $n \in \mathbb{N}$ such that $2^{-n-1} <\epsilon \leq 2^{-n}$, and write
\begin{equation*}
\P\left\{ u\left( I \times J\right)
\cap B(z\,,\e) \neq\varnothing \right\}
\le \sum_{(i,j): I_{i,j}^n \cap (I\times J) \neq \varnothing} \P \{
u(I_{i,j}^n) \cap B(z\,,\e) \neq \varnothing\}.
\end{equation*}
The number of $(1+k)$-tuples $(i,j)$ involved in the sum is at most
$c \, 2^{n(\frac{4}{\gamma}+\frac{2}{\gamma}k)}$. Lemma \ref{ball} implies therefore that for all $z \in A$, $\eta>0$ and large $n$,
\begin{equation*}
\P\left\{ {u}\left( I \times J \right)
\cap B(z\,,\e) \neq\varnothing \right\}
\le \tilde{C} (2^{-n})^{d-\eta}\, 2^{n \frac{4+2k}{\gamma}}.
\end{equation*}
Let $\eta' = \eta + \left(\frac{1}{\gamma}- \frac{1}{2-\beta} \right) (4+2k)$. Then this is equal to
$$
2^{-n(d-(\frac{4+2k}{2-\beta}+\eta'))} \le C \e^{d-\frac{4+2k}{2-\beta}-\eta'},
$$
because $2^{-n-1}< \epsilon \leq 2^{-n}$. Note that $C$ does not depend on $(n \,,\epsilon)$, and $\eta'$ can be made arbitrarily small by choosing $\gamma$ close to $2-\beta$ and $\eta$ small enough. In particular, for all $\e \in (0,1)$,
\begin{equation}\label{eq:hit:ball:UB}
\P\left\{ {u}\left( I \times J \right) \cap B(z\,,\e) \neq\varnothing \right\}
\le C\, \e^{d-\frac{4+2k}{2-\beta}-\eta'}.
\end{equation}

Now we use a \emph{covering argument}:
Choose $\tilde\e \in (0,1)$ and let $\{B_i\}_{i=1}^{\infty}$ be a sequence of open balls
in $\mathbb{R}^d$ with
respective radii $r_i \in [0, \tilde\e)$ such that
\begin{equation} \label{hit2}
A\subset \cup_{i=1}^\infty B_i \text{ and }
\sum_{i=1}^\infty (2r_i)^{d-\frac{4+2k}{2-\beta}-\eta'} \le
\mathcal{H}_{d-\frac{4+2k}{2-\beta}-\eta'} (A)+\tilde\e.
\end{equation}
Because $\P\left\{ {u}\left( I \times J \right)
\cap A \neq\varnothing \right\}$ is at most
$\sum_{i=1}^\infty \P\left\{
{u}(I \times J )\cap B_i\neq
\varnothing \right\}$, the bounds in (\ref{eq:hit:ball:UB}) and (\ref{hit2}) together imply that
\begin{equation*}
\P\left\{ {u}\left( I \times J\right)
\cap A \neq\varnothing \right\} \le C \left( \mathcal{H}_{d-\frac{4+2k}{2-\beta}-\eta'}
(A)+\tilde\e\right).
\end{equation*}
Let $\tilde\e\to 0$ to conclude.
\end{proof}

\subsection{Proof of Theorem \ref{t1}(b)}\label{sec23}

The following preliminary lemmas are the analogues needed here of \cite[Lemma 2.2]{Dalang:05} and \cite[Lemma 2.3]{Dalang:05}, respectively.

\begin{lem} \label{prel}
Fix $T>0$ and let $I \times J \subset (0,T] \times \mathbb{R}^k$ be a closed non-trivial rectangle. Then for all $N>0$, $b>0$, $\tilde{\gamma}>\gamma>0$ and $p > \frac{2d}{\gamma}(\tilde\gamma b - 2k -4)$, there exists a finite and positive constant $C=C(I, J, N, b, \gamma, \tilde{\gamma},p)$ such that for all $a\in[0\,,N]$,
\begin{align}\nonumber
&\int_{I} dt \int_{I} ds \int_J dx \int_J dy
\ (\vert t-s \vert^{\tilde{\gamma}/2}+\Vert x-y \Vert^{\tilde{\gamma}})^{-b/2}
\left(\frac{\vert t-s \vert^{\gamma/4}+\Vert x-y \Vert^{\gamma/2}}{a} \, \wedge 1\right)^{p/(2d)} \\
&\qquad \le C \, {\rm K}_{\frac{\tilde{\gamma}}{\gamma}b-\frac{4+2k}{\gamma}}(a).
\label{int}
\end{align}
\end{lem}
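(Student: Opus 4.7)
\medskip
\noindent\textbf{Proof plan for Lemma \ref{prel}.}

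\smallskip
The plan is to reduce the four-fold integral to a one-dimensional radial integral in a single scale parameter $r$, and then to evaluate it by splitting at $r=a$. First, I would change variables via $\tau = t-s$ and $w = x-y$. Because $I$ and $J$ are bounded, the $s$- and $y$-integrations contribute only a bounded multiplicative constant, and the integrand depends on $w$ only through $\|w\|$, so after passing to spherical coordinates in $\R^k$ the integral is bounded by a constant multiple of
\begin{equation*}
\int_0^{T_0} d\tau \int_0^{R_0} h^{k-1}\, dh\;
(\tau^{\tilde\gamma/2}+h^{\tilde\gamma})^{-b/2}\,
\Bigl(\tfrac{\tau^{\gamma/4}+h^{\gamma/2}}{a}\wedge 1\Bigr)^{p/(2d)},
\end{equation*}
where $T_0, R_0$ depend only on $I,J$. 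The key elementary observation is that, for nonnegative $\tau, h$,
\begin{equation*}
\tau^{\tilde\gamma/2}+h^{\tilde\gamma}
\;\asymp\; \max(\tau^{\gamma/4},h^{\gamma/2})^{2\tilde\gamma/\gamma}
\;\asymp\; \bigl(\tau^{\gamma/4}+h^{\gamma/2}\bigr)^{2\tilde\gamma/\gamma},
\end{equation*}
so that the first factor becomes $(\tau^{\gamma/4}+h^{\gamma/2})^{-\tilde\gamma b/\gamma}$ up to constants.

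\smallskip
Next I would make the change of variables $u=\tau^{\gamma/4}$, $v=h^{\gamma/2}$, whose Jacobians give $d\tau = \frac{4}{\gamma}u^{4/\gamma-1}\,du$ and $h^{k-1}dh = \frac{2}{\gamma}v^{2k/\gamma-1}\,dv$. This recasts the expression as
\begin{equation*}
\mathrm{const}\cdot\int_0^{U_0}\!\!\int_0^{V_0}
u^{4/\gamma-1}v^{2k/\gamma-1}(u+v)^{-\tilde\gamma b/\gamma}
\Bigl(\tfrac{u+v}{a}\wedge 1\Bigr)^{p/(2d)}du\,dv.
\end{equation*}
Now I would apply the Beta-type substitution $u=rs$, $v=r(1-s)$, with $r\in(0,U_0+V_0)$ and $s\in(0,1)$, whose Jacobian is $r$. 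The $s$-integral decouples and is a finite Beta integral $B(4/\gamma,2k/\gamma)$, while the $r$-integral takes the clean form
\begin{equation*}
\int_0^{R_1} r^{-\alpha-1}\Bigl(\tfrac{r}{a}\wedge 1\Bigr)^{p/(2d)}dr,
\qquad\text{where}\quad \alpha:=\tfrac{\tilde\gamma b}{\gamma}-\tfrac{4+2k}{\gamma}.
\end{equation*}

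\smallskip
The final step is to evaluate this one-dimensional integral by splitting it at $r=a$ (assuming $a\le R_1$; the case $a>R_1$ is trivial since $a\le N$). On $(0,a)$ the integrand equals $a^{-p/(2d)}r^{p/(2d)-\alpha-1}$, which is integrable near $0$ precisely because the hypothesis $p>\frac{2d}{\gamma}(\tilde\gamma b-2k-4)=2d\alpha$ gives $p/(2d)-\alpha>0$; this contributes $\asymp a^{-\alpha}$ when $\alpha\ne 0$ and $\asymp 1$ when $\alpha=0$. On $(a,R_1)$ the integrand is $r^{-\alpha-1}$, producing $\asymp a^{-\alpha}$ if $\alpha>0$, $\asymp \log(R_1/a)$ if $\alpha=0$, and a bounded contribution if $\alpha<0$. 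Matching these three cases against the definition of ${\rm K}_\alpha$ in \eqref{k} yields the bound $C\,{\rm K}_\alpha(a)$; in the logarithmic case one fixes $N_0$ so that $N_0\ge R_1\vee a$ for all $a\in[0,N]$, absorbing the additive constants into $\log(N_0/a)$.

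\smallskip
I do not see a serious obstacle: the computation is essentially bookkeeping. The only delicate point is using the hypothesis $p>\frac{2d}{\gamma}(\tilde\gamma b-2k-4)$ in exactly the right place---to guarantee integrability of $r^{p/(2d)-\alpha-1}$ near the origin in the case $\alpha>0$---and then verifying that the three regimes for $\alpha$ assemble into the unified statement involving ${\rm K}_\alpha$, with a choice of $N_0$ that works uniformly in $a\in[0,N]$.
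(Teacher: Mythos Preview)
Your proof is correct and follows essentially the same strategy as the paper: translate to difference variables, reduce the resulting integral to a one-dimensional radial integral, and then perform a case analysis on the sign of $\alpha=\frac{\tilde\gamma b}{\gamma}-\frac{4+2k}{\gamma}$, using the hypothesis on $p$ precisely to ensure integrability near the origin. The paper scales out $a$ first and then passes to polar coordinates in $\R^2$ after the cruder bound $x^{k-1}u\le(u+x)^k$, whereas your Beta-type substitution $u=rs$, $v=r(1-s)$ decouples the angular part exactly; both routes land on the same one-dimensional integral and the same three-case conclusion.
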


\begin{proof}
Let $\vert J \vert$ denote the diameter of the set $J$. Using the change of variables $\tilde{u}=t-s$ ($t$ fixed), $\tilde{v}=x-y$ ($x$ fixed), we see that the integral in (\ref{int}) is bounded above by
\begin{equation*}
\vert I \vert \lambda_k(J) \int_0^{\vert I \vert} d\tilde{u}
\int_{B(0, \vert J \vert)} d\tilde{v} \,
(\tilde{u}^{\tilde{\gamma}/2}+\Vert \tilde{v} \Vert^{\tilde{\gamma}})^{-b/2}
\left(\frac{\tilde{u}^{\gamma/4}+\Vert \tilde{v} \Vert^{\gamma/2}}{a} \,\wedge 1\right)^{p/(2d)},
\end{equation*}
where $\lambda_k$ denotes Lebesgue measure in $\R^k$. A change of variables [$\tilde{u}=a^{4/\gamma} u^2$, $\tilde{v}=a^{2/\gamma} v$] implies that this is equal to
\begin{equation*}
C \, a^{\frac{4+2k}{\gamma}-\frac{\tilde{\gamma}}{\gamma} b} \int_0^{a^{-2/\gamma}(\vert I \vert)^{1/2} } u \, du
\int_{B(0, \vert J \vert a^{-2/\gamma})} dv \,
( u^{\tilde{\gamma}}+ \Vert v \Vert^{\tilde{\gamma}})^{-b/2}
\left(\left({u}^{\gamma/4}+\Vert {v} \Vert^{\gamma/2}\right) \,\wedge 1\right)^{p/(2d)}.
\end{equation*}
We pass to polar coordinates in the variable $v$, to see that this is bounded by
\begin{equation*}
C \, a^{\frac{4+2k}{\gamma}-\frac{\tilde{\gamma}}{\gamma}b} \int_0^{a^{-2/\gamma}(\vert I \vert)^{1/2} } du
\int_{0}^{ \vert J \vert a^{-2/\gamma}} dx \,  x^{k-1}\, u\,
( u^{\tilde{\gamma}}+ x^{\tilde{\gamma}})^{-b/2}
\left(\left(u^{\gamma/2}+x^{\gamma/2}\right)^{p/(2d)} \wedge 1\right) .
\end{equation*}
Bounding $x^{k-1}u$ by $(u+x)^k$ and using the fact that all norms in $\R^2$ are equivalent, we bound this above by
\begin{equation*}
C \, a^{\frac{4+2k}{\gamma}-\frac{\tilde{\gamma}}{\gamma} b} \int_0^{a^{-2/\gamma}(2\vert I \vert)^{1/2} } du
\int_{0}^{2 \vert J \vert a^{-2/\gamma}} dx \,
( u+ x)^{k-\frac{\tilde{\gamma} b}{2}}
\left((u+x)^{\gamma p/(4d)} \wedge 1\right).
\end{equation*}
We now pass to polar coordinates of $(u,x)$, to bound this by
\begin{equation} \label{a22}
C \, a^{\frac{4+2k}{\gamma}-\frac{\tilde{\gamma}}{\gamma} b} (I_1+I_2(a)),
\end{equation}
where
\begin{equation*}
\begin{split}
&I_1= \int_0^{K N^{-2/\gamma}} d\rho \,
\rho^{k+1-\frac{\tilde{\gamma} b}{2}}\, (\rho^{\gamma p /(4d)}\wedge 1), \\
&I_2(a)=\int_{KN^{-2/\gamma}}^{K a^{-2/\gamma}} d\rho \,
\rho^{k+1-\frac{\tilde{\gamma} b}{2}},
\end{split}
\end{equation*}
where $K=2(\sqrt{\vert I \vert} \vee  \vert J \vert)$. Clearly, $I_1 \leq C < \infty$ since $k+1 - \frac{\tilde \gamma}{2} b + \frac{\gamma p}{4d} > -1$ by the hypothesis on $p$. Moreover, if $k+2-\frac{\tilde{\gamma} b}{2} \neq 0$, then
\begin{equation*}
I_2(a)=K^{k+2-\frac{\tilde{\gamma} b}{2}} \, \frac{a^{\frac{\tilde{\gamma}}{\gamma}b-\frac{4+2k}{\gamma}} -N^{\frac{\tilde{\gamma}}{\gamma}b-\frac{4+2k}{\gamma}}}{k+2-\frac{\tilde{\gamma} b}{2}}.
\end{equation*}
There are three separate cases to consider.
(i) If $k+2-\frac{\tilde{{\gamma}} b}{2}<0$, then $I_2(a) \leq C$ for all
$a \in [0,N]$.
(ii) If $k+2-\frac{\tilde{\gamma} b}{2}>0$, then $I_2(a) \leq c \,
a^{\frac{\tilde{\gamma}}{\gamma}b-\frac{4+2k}{\gamma}}$.
(iii) If $k+2-\frac{\tilde{\gamma} b}{2}=0$, then
\begin{equation*}
I_2(a) =\frac{2}{\gamma} \biggl[\ln \frac{1}{a}+\ln N \biggr].
\end{equation*}
We combine these observations to conclude that the expression in
\eqref{a22} is bounded by $C \, {\rm
K}_{\frac{\tilde{\gamma}}{\gamma}b-\frac{4+2k}{\gamma}}(a)$,
provided that $N_0$ in (\ref{k}) is sufficiently large. This proves
the lemma.
\end{proof}

For all $a,\nu,\rho>0$, define
\begin{equation} \label{rdpsi}
\Psi_{a,\nu}(\rho) := \int_0^a dx \, \frac{x^{k-1}}{\rho + x^\nu}.
\end{equation}

\begin{lem}\label{lem:Psi}
For all $a,\nu, T>0$, there exists a finite
and positive constant $C=C(a\,,\nu \,,T)$ such that
for all $0<\rho<T$,
\begin{equation*}
\Psi_{a,\nu}(\rho) \le C {\rm K}_{(\nu-k)/\nu}(\rho).
\end{equation*}
\end{lem}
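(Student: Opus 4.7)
The plan is to reduce the estimate to a routine scaling computation by splitting into the three cases dictated by the definition of ${\rm K}_\alpha$, namely $\nu>k$, $\nu=k$, and $\nu<k$, which correspond to $\alpha=(\nu-k)/\nu$ being positive, zero, and negative respectively.

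First I would dispose of the easy case $\nu<k$ (so $\alpha<0$, and we only need a bound independent of $\rho$). Here I would simply drop $\rho$ from the denominator to get
\begin{equation*}
\Psi_{a,\nu}(\rho) \le \int_0^a \frac{x^{k-1}}{x^\nu}\, dx = \frac{a^{k-\nu}}{k-\nu},
\end{equation*}
which is a finite constant and matches ${\rm K}_\alpha(\rho)=1$.

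For $\nu=k$ (so $\alpha=0$), I would perform the explicit substitution $u=x^k$, $du=kx^{k-1}\,dx$, which gives
\begin{equation*}
\Psi_{a,\nu}(\rho)=\frac{1}{k}\int_0^{a^k}\frac{du}{\rho+u}=\frac{1}{k}\log\!\biggl(1+\frac{a^k}{\rho}\biggr).
\end{equation*}
For $\rho\in(0,T)$, this is bounded above by $\frac{1}{k}\log(N_0/\rho)$ as soon as $N_0$ is taken large enough that $\rho+a^k\le N_0$ for all $\rho\le T$; this is the place where the freedom to fix $N_0$ large in the definition \eqref{k} is used.

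The main case is $\nu>k$ (so $\alpha>0$). The plan is the natural change of variables $x=\rho^{1/\nu}y$, which makes the denominator $\rho+x^\nu=\rho(1+y^\nu)$ and extracts the desired power of $\rho$:
\begin{equation*}
\Psi_{a,\nu}(\rho) = \rho^{(k-\nu)/\nu}\int_0^{a\rho^{-1/\nu}}\frac{y^{k-1}}{1+y^\nu}\,dy
\le \rho^{-(\nu-k)/\nu}\int_0^{\infty}\frac{y^{k-1}}{1+y^\nu}\,dy.
\end{equation*}
The remaining integral is finite because $y^{k-1}/(1+y^\nu)\sim y^{k-1-\nu}$ at infinity with $k-1-\nu<-1$, and is integrable at $0$ since $k\ge 1$.

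There is no real obstacle; the only minor point is the $\log$ case, where one must be explicit that the constant $N_0$ appearing in \eqref{k} has to be chosen at least as large as $T+a^k$ (more generally, as large as needed by the various lemmas that invoke ${\rm K}_0$), which is precisely the reason the paper postpones fixing $N_0$ until the end of this proof. Packaging the three cases together yields the stated bound with $C=C(a,\nu,T)$.
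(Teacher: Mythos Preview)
Your proof is correct and follows essentially the same approach as the paper: the same three-case split, the same monotone bound for $\nu<k$, the same scaling substitution $y=x\rho^{-1/\nu}$ for $\nu>k$, and an explicit evaluation for $\nu=k$. One small inaccuracy in your commentary: the paper actually fixes $N_0$ at the end of the proof of Lemma~\ref{prel}, not this lemma, but since $C$ is allowed to depend on $a$ and $T$ this does not affect the validity of your bound in the logarithmic case.
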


\begin{proof}
If $\nu<k$, then $\lim_{\rho \to 0}
\Psi_{a,\nu}(\rho)=\int_0^a x^{k-1-\nu}\, dx<\infty$.
In addition, $\rho \mapsto \Psi_{a,\nu}(\rho)$ is nonincreasing, so
$\Psi_{a,\nu}$ is bounded on $\R_+$
when $\nu<k$. In this case, ${\rm K}_{(\nu-k)/\nu}(\rho)=1$,
so the result follows in the case that $\nu<k$.

For the case $\nu\ge k$, we change variables $(y=x\rho^{-1/\nu})$
to find that
\begin{equation*}
\Psi_{a,\nu}(\rho) = \rho^{-(\nu-k)/\nu}
\int_0^{a \rho^{-1/\nu}} dy \, \frac{y^{k-1}}{
1+ y^\nu}.
\end{equation*}
When $\nu>k$, this gives the desired result, with $c= \int_0^{+\infty} dy\, y^{k-1}\,(1+ y^\nu)^{-1}$.
When $\nu=k$, we simply evaluate the integral in (\ref{rdpsi}) explicitly: this gives the result for $0< \rho < T$,
given the choice of ${\rm K}_0(r)$ in (\ref{k}).
We note that the constraint ``$0 < \rho < T$'' is needed only in this case.
\end{proof}

\begin{proof}[Proof of the lower bound of Theorem \ref{t1}]

The proof of this result follows along the same lines as the proof
of \cite[Theorem 2.1(1)]{Dalang:05}, therefore we will only sketch
the steps that differ. We need to replace their $\beta-6$ by our $d-\frac{4+2k}{\tilde{\gamma}}+\eta$.

Note that our Theorem \ref{t2}(a) and Theorem \ref{te2} prove that
\begin{equation} \label{strictpos}
\inf_{\Vert z \Vert \leq M} \int_I dt \int_J dx \, p_{t,x}(z) \geq C >0,
\end{equation}
which proves hypothesis {\bf A1'} of \cite[Theorem 2.1(1)]{Dalang:05}, (see \cite[Remark 2.5(a)]{Dalang:05}).

Moreover, Theorem \ref{t2}(b) proves a property that is weaker than hypothesis {\bf A2} of \cite[Theorem 2.1(1)]{Dalang:05} with their $\beta=d+\eta$, $\gamma \in (0, 2-\beta)$ and
$$
{\bf \Delta}((t,x) \, ; (s,y))=\vert t-s \vert^{\gamma/2}+\Vert x-y \Vert^{\gamma},
$$
but which will be sufficient for our purposes.

Let us now follow the proof of \cite[Theorem 2.1(1)]{Dalang:05}.
Define, for all $z \in \mathbb{R}^d$ and $\epsilon>0$,
$\tilde{B}(z \,,\e):=\{y\in\R^d:\ |y-z|<\e\}$, where $|z|:=\max_{1\le j\le d}|z_j|$, and
\begin{equation}
J_\e(z) = \frac{1}{(2\e)^d} \int_{I} dt \, \int_{J } dx \, \1_{\tilde{B}(z,\e)}
(u(t\,,x)),
\end{equation}
as in \cite[(2.28)]{Dalang:05}.

Assume first that $d+\eta<\frac{4+2k}{2-\beta}$. Using Theorem
\ref{t2}(b), we find, instead of \cite[(2.30)]{Dalang:05},
\begin{equation*}
\E \left[ (J_\e(z))^2 \right] \leq c \, \int_{I} dt \int_{I} ds \int_{J} dx \int_{J} dy \,
(\vert t-s \vert^{\frac{2-\beta}{2}}+\Vert x-y \Vert^{2-\beta})^{-(d+\eta)/2}.
\end{equation*}
Use the change of variables
$u=t-s$ ($t$ fixed), $v=x-y$ ($x$ fixed) to see that the above integral is bounded above by
\begin{equation*} \begin{split}
& \tilde{c} \int_0^{ \vert I \vert} d\tilde{u}
\int_{B(0, \vert J \vert)} d \tilde{v} \,
(\tilde{u}^{\frac{2-\beta}{2}}+\Vert \tilde{v} \Vert^{2-\beta})^{-(d+\eta)/2} \\
& \qquad \qquad = c \int_0^{\vert I \vert} du
\int_{0}^{ \vert J \vert} dx \, x^{k-1} ( u^{\frac{2-\beta}{2}}+ x^{2-\beta})^{-(d+\eta)/2} \\
& \qquad \qquad\leq c \int_0^{\vert I \vert} du \, \Psi_{ \vert J
\vert, (2-\beta)(d+\eta)/2} (u^{(2-\beta)(d+\eta)/4}).
\end{split}
\end{equation*}
Hence, Lemma \ref{lem:Psi} implies that for all $\epsilon>0$,
\begin{equation*}
\E \left[ (J_\e(z))^2 \right] \leq C \int_0^{ \vert I \vert}
du \, {\rm K}_{1-\frac{2k}{(2-\beta)(d+\eta)}}(u^{(2-\beta)(d+\eta)/4}).
\end{equation*}
We now consider three
different cases: (i) If $0< (2-\beta)(d+\eta)<2k$, then the integral equals $\vert I \vert$.
(ii) If $2k<(2-\beta)(d+\eta) <4+2k$, then ${\rm K}_{1-\frac{2k}{(2-\beta)(d+\eta)}}(u^{(2-\beta)(d+\eta)/4})=
u^{(k/2)-(2-\beta)(d+\eta)/4}$ and the integral is finite. (iii) If $(2-\beta)(d+\eta)=2k$, then
${\rm K}_{0}(u^{k/2})=\text{log}(N_0/u^{k/2})$ and the integral is also finite.
The remainder of the proof of the lower bound of Theorem \ref{t1}
when $d+\eta<\frac{4+2k}{2-\beta}$ follows exactly as in
\cite[Theorem 2.1(1) {\it Case 1} ]{Dalang:05}.

Assume now that $d+\eta >\frac{4+2k}{2-\beta}$. Define, for all $\mu
\in \mathcal{P}(A)$ and $\e>0$,
\begin{equation*}
J_\e(\mu) = \frac{1}{(2\e)^d} \int_{\R^d} \mu(dz) \, \int_{I} dt \, \int_{J} dx \, \1_{\tilde{B}(z,\e)} (u(t\,,x)),
\end{equation*}
as \cite[(2.35)]{Dalang:05}.

In order to prove the analogue of \cite[(2.41)]{Dalang:05}, we use
Theorem \ref{t2}(b) and Lemma \ref{prel} (instead of \cite[Lemma 2.2(1)]{Dalang:05}), to see that for all $\mu
\in \mathcal{P}(A)$, $\epsilon \in (0,1)$ and $\gamma \in (0,
2-\beta)$,
\begin{equation*}
\E \left[ (J_\e(\mu))^2 \right] \leq c\, \left[\text{Cap}_{\frac{2-\beta}{\gamma}(d+\eta)-\frac{4+2k}{\gamma}}(A)\right]^{-1} =c\, \left[\text{Cap}_{d+\tilde{\eta}-\frac{4+2k}{2-\beta}}(A)\right]^{-1}.
\end{equation*}
The remainder of the proof of the lower bound of Theorem \ref{t1}
when $d+\eta>\frac{4+2k}{2-\beta}$ follows as in \cite[Proof of
Theorem 2.1(1) {\it Case 2}]{Dalang:05}.

The case $d+\eta =\frac{4+2k}{2-\beta}$ is proved exactly along the
same lines as the proof of \cite[Theorem 2.1(1) {\it Case
3}]{Dalang:05}, appealing to (\ref{strictpos}), Theorem \ref{t2}(b)
and Lemma \ref{prel}.
\end{proof}

\subsection{Proof of Corollaries \ref{c33} and \ref{c3bis}}

\begin{proof} [Proof of Corollary \ref{c33}.]
Let $z \in \mathbb{R}^d$. If $d < \frac{4+2k}{2-\beta}$, then there is $\eta>0$ such that $d-\frac{4+2k}{2-\beta}+\eta < 0$, and thus
$$
\textnormal{Cap}_{d-\frac{4+2k}{2-\beta}+\eta}(\{ z\})=1.
$$
Hence, Theorem \ref{t1}(b) implies that
$\{ z\}$ is not polar. On the other hand, if $d>\frac{4+2k}{2-\beta}$, then there is $\eta>0$ such that $d-\frac{4+2k}{2-\beta}-\eta > 0$. Therefore,
$$
\mathcal{H}_{d-\frac{4+2k}{2-\beta}-\eta}(\{ z\})=0
$$
and Theorem \ref{t1}(a) implies that $\{ z\}$ is polar.
\end{proof}

\begin{proof} [Proof of Corollary \ref{c3bis}.]
We first recall, following {\sc Khoshnevisan} \cite[Chap.11, Section 4]{Khoshnevisan:02}, the definition of stochastic codimension of a random set $E$ in $\mathbb{R}^d$, denoted
$\textnormal{codim}(E)$, if it exists: $\textnormal{codim}(E)$ is the real number $\alpha \in [0,d]$ such that for all compact sets $A \subset
\mathbb{R}^d$,
\begin{equation*}
P \{ E \cap A \neq \varnothing \}
\begin{cases}
>0 & \text{whenever $\dimh(A)> \alpha$}, \\
=0 & \text{whenever $\dimh(A)< \alpha$}.
\end{cases}
\end{equation*}
By Theorem \ref{t1},
$\textnormal{codim}(u(\mathbb{R}_+ \times \R^k))=(d-\frac{4+2k}{2-\beta})^+$.
Moreover, in {\sc Khoshnevisan} \cite[Thm.4.7.1, Chap.11]{Khoshnevisan:02}, it is
proved that given a random set $E$ in $\mathbb{R}^d$ whose
codimension is strictly between $0$ and $d$,
\begin{equation*}
\dimh(E)+\textnormal{codim}(E)=d, \; \; \text{a.s}.
\end{equation*}
This implies the desired statement.
\end{proof}


\section{Elements of Malliavin calculus}

Let $\mathcal{S}(\R^{k})$ be the Schwartz space of
$\mathcal{C}^\infty$ functions on $\R^k$ with rapid decrease. Let $\mathcal{H}$ denote the completion of $\mathcal{S}(\R^{k})$
endowed with the inner product
\begin{equation*}
\langle \phi(\cdot), \psi(\cdot) \rangle_{\mathcal{H}}=\int_{\R^k} d x \, \int_{\R^k} dy \,
\phi(x) \Vert x-y \Vert^{-\beta} \psi(y) =\int_{\R^k} d \xi \, \Vert \xi \Vert^{\beta-k} \mathcal{F} \phi(\cdot) (\xi)
\overline{\mathcal{F} \psi(\cdot) (\xi)},
\end{equation*}
$\phi, \psi \in \mathcal{S}(\R^{k})$. Notice that $\mathcal{H}$ may contain Schwartz distributions (see \cite{Dalang:99}).

For $h=(h^1,\dots,h^d) \in \cH^d$ and $\tilde h=(\tilde
h^1,\dots,\tilde h^d) \in \cH^d$, we set $\langle h, \tilde h
\rangle_{\cH^d} = \sum_{i = 1}^d \langle h^i, \tilde h^i
\rangle_\cH$. Let $T>0$ be fixed. We set $\mathcal{H}^d_T=L^2([0,T]; \mathcal{H}^d)$ and
for $0\leq s \leq t \leq T$, we will write $\cH_{s,t}^d=L^2([s,t];
\mathcal{H}^d)$.

The centered Gaussian noise $F$ can be used to construct an isonormal Gaussian process $\{ W(h),\ h \in \mathcal{H}^d_T\}$ (that is, $\E[W(h)W(\tilde h)] = \langle h, \tilde h \rangle_{\mathcal{H}^d_T}$) as follows. Let $\{ e_j,\, j \geq 0\}\subset \mathcal{S}(\R^{k})$ be a complete orthonormal system of the Hilbert space $\mathcal{H}$. Then for any $t \in [0,T]$, $i \in \{1,\dots,d\}$ and $j \geq 0$, set
\begin{equation*}
W^i_j(t)=\int_0^t \int_{\R^k} e_j(x) \cdot F^i(ds,dx),
\end{equation*}
so that $(W^i_j,\ j \geq 1)$ is a sequence of independent standard real-valued Brownian motions such that for any $\phi \in \mathcal{D}([0,T] \times \R^k)$,
\begin{equation*}
F^i(\phi)= \sum_{j=0}^{\infty} \int_0^T \langle \phi(s, \cdot), e_j(\cdot) \rangle_{\mathcal{H}}
\, d W^i_j(s),
\end{equation*}
where the series converges in $L^2(\Omega,\mathcal{F}, \P)$. For $h^i \in \mathcal{H}_T$, we set
\begin{equation*}
W^i(h^i)= \sum_{j=0}^{\infty} \int_0^T \langle h^i(s, \cdot), e_j(\cdot) \rangle_{\mathcal{H}} \, d W^i_j(s),
\end{equation*}
where, again, this series converges in $L^2(\Omega,F,P)$. In particular, for $\phi \in \mathcal{D}([0,T] \times \R^k)$, $F^i(\phi)=W^i(\phi)$. Finally, for $h=(h^1,\dots,h^d) \in \mathcal{H}^d_T$, we set
$$
W(h) = \sum_{i=1}^d W^i(h^i).
$$

With this isonormal Gaussian process, we can use the framework of Malliavin calculus. Let $\mathcal{S}$ denote the class of smooth random variables of the form $G=g(W(h_1),...,W(h_n))$,
where $n \geq 1$, $g \in \mathcal{C}^{\infty}_P(\mathbb{R}^n)$, the set of real-valued
functions $g$ such that $g$ and all its partial derivatives have
at most polynomial growth, $h_i \in \mathcal{H}^d_T$.
Given $G \in \mathcal{S}$, its derivative $(D_r G = (D^{(1)}_r G,\dots,D^{(d)}_r G),\ r \in [0,T])$, is an $\mathcal{H}^d_T$-valued random vector defined by
\begin{equation*}
D_{r} G= \sum_{i=1}^n \frac{\partial g}{\partial x_i}
(W(h_1),...,W(h_n)) h_i(r).
\end{equation*}
For $\phi \in \mathcal{H}^d$ and $r \in [0,T]$, we write $D_{r, \phi} G = \langle D_{r} G, \phi(\cdot) \rangle_{\mathcal{H}^d}$. More generally, the derivative $D^m G = (D^{m}_{(r_1,...,r_m)} G,\, (r_1,\dots,r_m) \in [0,T]^m)$ of order $m \geq 1$ of $G$ is the $(\mathcal{H}^d_T)^{\otimes j}$-valued random vector defined by
\begin{equation*}
D^{m}_{(r_1,...,r_m)}G=\sum_{i_1,...,i_m=1}^n \frac{\partial}{\partial{x_{i_1}}} \cdots
\frac{\partial}{\partial{x_{i_m}}} g(W(h_1),...,W(h_n)) h_{i_1}(r_1) \otimes \cdots \otimes
h_{i_m}(r_m).
\end{equation*}
For $p, m\geq 1$, the space $\mathbb{D}^{m,p}$ is the closure of $\mathcal{S}$ with
respect to the seminorm $\Vert \cdot \Vert_{m,p}$ defined by
\begin{equation*}
\Vert G \Vert ^p _{m,p} = \E[\vert G\vert^p] + \sum_{j=1}^m
\E\left[\Vert D^j G \Vert ^p _{(\mathcal{H}^d_T)^{\otimes j}}\right].
\end{equation*}
We set $\mathbb{D}^{\infty}= \cap_{p \geq 1}\cap_{m \geq 1}\mathbb{D}^{m,p}$.

The derivative operator $D$ on $L^2(\Omega)$ has an
adjoint, termed the Skorohod integral and denoted by $\delta$, which
is an unbounded operator on $L^2(\Omega, \mathcal{H}_T^d)$. Its domain, denoted by Dom~$\delta$, is the
set of elements $u \in L^2(\Omega, \mathcal{H}_T^d)$ for which there exists a constant $c$ such that 
$|\E [\langle DF, u \rangle_{\mathcal{H}_T^d}] | \leq c \Vert F \Vert _{0,2}$, for any $F \in
\mathbb{D}^{1,2}$. If $u \in$ Dom~$\delta$, then $\delta (u)$ is the element of
$L^2(\Omega)$ characterized by the following duality relation:
\begin{equation*}
\E [F \delta (u)]= \E [\langle DF, u \rangle_{\mathcal{H}_T^d}] , \;
\; \text{for all} \; F \in  \mathbb{D}^{1,2}.
\end{equation*}

An important application of Malliavin calculus is the following global criterion for existence and smoothness
of densities of probability laws.
\begin{thm} \label{3t1}\textnormal{~\cite[Thm.2.1.2 and Cor.2.1.2]{Nualart:95} or~\cite[Thm.5.2]{Sanz:05}}
Let $F=(F^1,...,F^d)$ be an $\mathbb{R}^d$-valued random vector satisfying the following
two conditions: 
\begin{enumerate}
\item[\textnormal{(i)}] $F \in (\mathbb{D}^{\infty})^d$; 
\item[\textnormal{(ii)}] the Malliavin matrix of $F$ defined by
$\gamma_F=(\langle DF^i, D F^j \rangle_{\mathcal{H}_T^d})_{1\leq i,j\leq d}$ is
invertible a.s. and $(\textnormal{det} \; \gamma_F)^{-1} \in L^p(\Omega)$ for all $p\geq1$.
\end{enumerate}
Then the probability law of $F$ has an infinitely differentiable density function.
\end{thm}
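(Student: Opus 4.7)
The plan is to prove this classical Malliavin criterion by establishing, for every multi-index $\alpha \in \mathbb{N}^d$, an integration-by-parts formula
$$
\E[\partial^\alpha \varphi(F)] = \E[\varphi(F)\, H_\alpha(F)], \qquad \varphi \in C_b^\infty(\R^d),
$$
for some random variable $H_\alpha(F) \in \bigcap_{p \geq 1} L^p(\Omega)$, and then concluding by Fourier analysis. Indeed, applying the formula to (a smooth approximation of) $\varphi(x) = e^{2\pi i \xi \cdot x}$ gives $|(2\pi \xi)^\alpha|\, |\widehat{p}_F(\xi)| \leq \|H_\alpha\|_{L^1(\Omega)}$, so the characteristic function of $F$ decays faster than any polynomial, and consequently the density $p_F$ exists and is infinitely differentiable with bounded derivatives. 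Alternatively, one can recover $p_F$ directly from $p_F(z) = \E[\1_{\{F_i > z_i,\; i=1,\dots,d\}}\, H_{(1,\dots,1)}(F)]$ and differentiate further by iterating the formula.

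The engine for producing $H_\alpha$ is the interaction between the chain rule and the Skorohod duality $\E[\langle DG, u\rangle_{\mathcal{H}^d_T}] = \E[G\, \delta(u)]$. For smooth $\varphi$, the chain rule gives
$$
\langle D(\varphi(F)), DF^j \rangle_{\mathcal{H}^d_T} = \sum_{i=1}^d \partial_i \varphi(F)\, (\gamma_F)_{ij},
$$
so multiplying by the inverse Malliavin matrix, taking expectations, and invoking the duality formula yields
$$
\E[\partial_k \varphi(F)] = \sum_{j=1}^d \E\bigl[\varphi(F)\, \delta\bigl((\gamma_F^{-1})_{kj}\, DF^j\bigr)\bigr].
$$
Iterating this identity $|\alpha|$ times produces $H_\alpha$ as an $|\alpha|$-fold iterated Skorohod integral whose integrands are polynomials in the entries of $\gamma_F^{-1}$, in $DF$, and in higher Malliavin derivatives of $F$.

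The main obstacle, and precisely what the two hypotheses are designed to overcome, is justifying that each argument of $\delta$ lies in its domain and that the resulting random variables have finite moments of every order. This reduces to showing that $(\gamma_F^{-1})_{ij} \in \mathbb{D}^\infty$. By Cramer's rule, each such entry is a polynomial in the components of $\gamma_F$ (all lying in $\mathbb{D}^\infty$, since $F \in (\mathbb{D}^\infty)^d$ and $D$ preserves every $\mathbb{D}^{m,p}$) divided by $\det \gamma_F$. The key technical lemma is: if $V > 0$ a.s., $V \in \mathbb{D}^\infty$, and $V^{-1} \in \bigcap_{p \geq 1} L^p(\Omega)$, then $V^{-1} \in \mathbb{D}^\infty$. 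One proves this by approximating $x \mapsto 1/x$ by smooth truncations, using the chain rule $D(V^{-1}) = -V^{-2}\, DV$, extending to higher derivatives via the Leibniz formula, and passing to the limit in $\mathbb{D}^{m,p}$ via the $L^p$-integrability of $V^{-1}$. Combined with the continuity of $\delta\colon \mathbb{D}^{m+1,p}(\mathcal{H}^d_T) \to \mathbb{D}^{m,p}(\mathcal{H}^d_T)$ and Meyer-type inequalities controlling norms of products in $\mathbb{D}^{m,p}$, the induction on $|\alpha|$ closes, every $H_\alpha$ has finite moments of all orders, and the conclusion follows.
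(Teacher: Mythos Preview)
The paper does not prove this theorem; it is stated as a citation of a classical result from \cite[Thm.~2.1.2 and Cor.~2.1.2]{Nualart:95} and \cite[Thm.~5.2]{Sanz:05}, with no proof given. Your sketch is correct and is precisely the standard argument found in those references: the integration-by-parts formula derived via the chain rule and Skorohod duality, the reduction to showing $(\gamma_F^{-1})_{ij} \in \mathbb{D}^\infty$ (handled via Cramer's rule and the lemma on reciprocals of nondegenerate positive variables in $\mathbb{D}^\infty$), and the conclusion via rapid decay of the characteristic function or the explicit density representation.
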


A random vector $F$ that satisfies conditions (i) and (ii) of Theorem~\ref{3t1} is said to be
nondegenerate. The next result gives a criterion for uniform boundedness of the density of a
nondegenerate random vector.
\begin{prop} \label{norm} \textnormal{\cite[Proposition 3.4]{Dalang2:05}}
For all $p>1$ and $\ell \geq1$, let $c_1=c_1(p)>0$ and $c_2=c_2(\ell,p)\geq 0$ be fixed.
Let $F \in (\mathbb{D}^{\infty})^d$ be a nondegenerate random vector such that
\begin{itemize}
\item[\textnormal{(a)}] $\E [(\textnormal{det}
\,\gamma_{F} )^{-p}] \leq c_1$;
\item[\textnormal{(b)}] $\E [\Vert D^{l}(F^i)
\Vert^p_{(\mathcal{H}_T^d)^{\otimes \ell}}] \leq c_2 ,\; i=1,...,d$.
\end{itemize}
Then the density of $F$ is $C^\infty$ and uniformly bounded, and the bound does not depend on $F$ but only on the constants
$c_1(p)$ and $c_2(\ell,p)$.
\end{prop}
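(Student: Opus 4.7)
My plan is to bound $\|p_F\|_\infty$ (and prove smoothness of $p_F$) by combining the Malliavin integration by parts formula with Fourier inversion, then verifying that every estimate depends only on $c_1(p)$ and $c_2(\ell,p)$.

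The first step is the iterated integration by parts formula: for any multi-index $\alpha = (\alpha_1, \ldots, \alpha_m) \in \{1, \ldots, d\}^m$ and any bounded smooth $\varphi : \R^d \to \mathbb{C}$,
\[
\E\bigl[\partial_{\alpha_1} \cdots \partial_{\alpha_m} \varphi(F)\bigr] = \E\bigl[\varphi(F)\, H_\alpha(F,1)\bigr],
\]
where the Malliavin weight $H_\alpha(F,1)$ is constructed recursively as an iterated Skorohod integral starting from $H_{(i)}(F,G) = \delta\bigl(G \sum_{j=1}^d (\gamma_F^{-1})_{ij}\, DF^j\bigr)$. Applied to $\varphi(y) = e^{-2\pi i \langle \xi,y \rangle}$, this yields
\[
(2\pi\|\xi\|)^m\, |\hat p_F(\xi)| \leq C_{m,d}\, \max_{|\alpha|=m} \|H_\alpha(F,1)\|_{L^1(\Omega)},
\]
where $\hat p_F(\xi) = \E[e^{-2\pi i \langle \xi, F\rangle}]$. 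Choosing $m>d$ makes $\hat p_F$ integrable, so Fourier inversion produces the density $p_F$ with $\|p_F\|_\infty \leq (2\pi)^{-d} \|\hat p_F\|_{L^1}$. The same argument with extra polynomial factors in $\xi$ shows that $\hat p_F$ lies in the Schwartz class, hence $p_F \in C^\infty$.

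The technical heart is a uniform bound on $\|H_\alpha(F,1)\|_{L^1(\Omega)}$. By Meyer's inequalities, the Skorohod integral $\delta$ is continuous from $\mathbb{D}^{m+1,q}(\mathcal{H}_T^d)$ to $\mathbb{D}^{m,q}$ for every $q>1$, so each application of $\delta$ in the recursion costs one extra Malliavin derivative and one extra Hölder exponent. Cramer's rule $(\gamma_F^{-1})_{ij} = \mathrm{cof}_{ji}(\gamma_F)/\det\gamma_F$, combined with the Leibniz and chain rules for $D$, reduces the Malliavin--Sobolev norms of $(\gamma_F^{-1})_{ij}$ to products of $L^{q'}$ norms of the Malliavin derivatives $D^j F^i$ (dominated by $c_2(j,q')$ via (b)) and $L^{q''}$ norms of $(\det \gamma_F)^{-1}$ (dominated via (a)). Hölder's inequality then assembles a bound on $\|H_\alpha(F,1)\|_{L^1}$ depending solely on the prescribed constants.

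The main obstacle I anticipate is the combinatorial bookkeeping in the last step: the recursive application of $\delta$ and of the Leibniz rule produces a proliferating (but finite) collection of terms, each requiring its own Hölder splitting. Tracking the precise orders $\ell$ of Malliavin derivatives and the precise moments of $\det \gamma_F$ that appear, and checking that all of them are accommodated by hypotheses (a) and (b) once $\ell$ is chosen large enough, is a routine but lengthy accounting exercise. Beyond Meyer's inequalities and the fact that $F \in (\mathbb{D}^\infty)^d$, no further analytic input is required.
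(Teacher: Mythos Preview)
The paper does not supply its own proof of this proposition: it is quoted verbatim from \cite[Proposition 3.4]{Dalang2:05} and used as a black box. So there is no in-paper argument to compare against.

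Your sketch is a correct and standard route to the result. The Fourier-inversion approach you outline---bound $\vert\hat p_F(\xi)\vert$ by $\Vert H_\alpha(F,1)\Vert_{L^1}/(2\pi\Vert\xi\Vert)^{\vert\alpha\vert}$ via integration by parts, then integrate in $\xi$---is exactly how one typically proves that the density is smooth and bounded with a bound depending only on the Malliavin data. Your identification of the bookkeeping as the only real obstacle is accurate: the recursion for $H_\alpha$ unwinds into a finite sum of products of iterated derivatives of $(\gamma_F^{-1})_{ij}$ and $DF^j$, and Cramer's rule plus the chain rule for $D^k[(\det\gamma_F)^{-1}]$ reduces everything to powers of $(\det\gamma_F)^{-1}$ (controlled by (a) for all $p$) and $\Vert D^\ell F^i\Vert$ (controlled by (b) for all $\ell,p$). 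Meyer's inequalities close the loop. One small point worth making explicit in a full write-up: the chain rule for $D^k[(\det\gamma_F)^{-1}]$ produces factors $(\det\gamma_F)^{-(k+1)}$, so you genuinely need (a) for \emph{every} $p>1$, not just a single fixed $p$; your hypotheses are stated that way, so this is fine.
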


In \cite{Marquez:01}, the Malliavin differentiability and the smoothness of the density of $u(t,x)$ was established when $d=1$, and the extension to $d > 1$ can easily be done by working coordinate by coordinate. These results were extended in \cite[Prop.~5.1]{Nualart:07}. In particular, letting $\cdot$ denote the spatial variable, for $r \in [0,t]$ and $i,l \in \{1,\dots,d\}$, the derivative of $u_i(t,x)$ satisfies the system of equations
\begin{equation} \begin{split} \label{derivative}
D^{(l)}_{r} (u_i(t,x)) &= \sigma_{il}(u(r,\cdot))\, S(t-r,x-\cdot) \\
& \, + \int_r^t \int_{\R^k} S(t-\theta,x-\eta) \sum_{j=1}^d D^{(l)}_{r} (\sigma_{i,j}(u(\theta,\eta))) \, M^j(d\theta, d \eta) \\
& \, + \int_r^t d\theta \int_{\R^k} d\eta \, S(t-\theta,x-\eta) D^{(l)}_{r} (b_i(u(\theta,\eta))), \end{split}
\end{equation}
and $D^{(l)}_{r} (u_i(t,x))=0$ if $r>t$. Moreover, by \cite[Prop.~6.1]{Nualart:07}, for any $p>1$, $m\geq 1$ and $i \in \{1,...,d\}$, the order $m$ derivative satisfies
\begin{equation} \label{supderivative}
\sup_{(t,x) \in [0,T] \times \R^k} \E \, \biggl[\big\Vert D^m(u_i(t,x)) \big\Vert^p_{(\mathcal{H}^d_T)^{\otimes m}} \biggr] < +\infty,
\end{equation}
and $D^m$ also satisfies the system of stochastic partial differential equations given in \cite[(6.29)]{Nualart:07} and obtained by iterating the calculation that leads to (\ref{derivative}). In particular, $u(t,x) \in (\mathbb{D}^{\infty})^d$, for all $(t,x) \in [0,T] \times \R^k$.

\section{Existence, smoothness and uniform boundedness of the density}\label{sec4}

The aim of this section is to prove Theorem \ref{t2}(a). For this,
we will use Proposition \ref{norm}. The following
proposition proves condition (a) of Proposition \ref{norm}.
\begin{prop} \label{det}
Fix $T>0$ and assume hypotheses $\textnormal{{\bf P1}}$ and $\textnormal{{\bf {\bf P2}}}$. Then, for any $p\geq 1$,
$
\E \, \bigl[( \textnormal{det} \, \gamma_{u(t,x)})^{-p} \bigr]
$
is uniformly bounded over $(t,x)$ in any closed non-trivial rectangle $I\times J \subset (0,T] \times \R^k$.
\end{prop}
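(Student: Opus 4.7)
The plan is to derive the following stronger estimate: for every $q\geq 1$ there is a constant $C_q$ (depending on $I$, $J$, $T$ and the coefficients, but not on $(t,x)$) such that
\begin{equation*}
\sup_{(t,x)\in I\times J} \P\Bigl\{\inf_{\|\xi\|=1}\xi^{\sf T}\gamma_{u(t,x)}\xi \leq \epsilon\Bigr\} \leq C_q\,\epsilon^q,\qquad \epsilon \in (0,1).
\end{equation*}
Since $\det\gamma_{u(t,x)}$ dominates the $d$-th power of its smallest eigenvalue, the standard identity $\E[X^{-p}] = p\int_0^\infty u^{-p-1}\P\{X<u\}\,du$ immediately turns such an estimate into the desired bound on $\E[(\det\gamma_{u(t,x)})^{-p}]$. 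Uniformity in $(t,x)$ will come from the fact that $t$ is bounded away from $0$ on $I$ and from the global bounds \eqref{equa3} and \eqref{supderivative}.

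Fix $(t,x)\in I\times J$ and $\xi\in S^{d-1}$. Using $\xi^{\sf T}\gamma_{u(t,x)}\xi = \int_0^t \sum_{l=1}^d\|\sum_{i=1}^d \xi_i D_r^{(l)}u_i(t,x)\|_{\mathcal{H}}^2\,dr$, I restrict the $r$-integration to $[t-\delta,t]$ for a small parameter $\delta$ and invoke \eqref{derivative} to split
\begin{equation*}
D_r^{(l)}u_i(t,x) = \sigma_{il}(u(r,\cdot))\,S(t-r,x-\cdot) + R_r^{(l),i}(t,x),
\end{equation*}
where $R_r^{(l),i}(t,x)$ collects the stochastic and Lebesgue integrals from $r$ to $t$. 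The elementary inequality $\|a+b\|^2\geq \tfrac12\|a\|^2 - \|b\|^2$ reduces everything to a lower bound on a main term $A_\xi$ (built from $\sigma_{il}(u(r,\cdot))\,S(t-r,x-\cdot)$) and an upper bound on a remainder $B_\xi$ (built from $R_r^{(l),i}$). For $A_\xi$ I freeze $\sigma_{il}$ at $u(t,x)$; the frozen contribution factors as $\|\xi^{\sf T}\sigma(u(t,x))\|^2 \int_0^\delta \|S(s,\cdot)\|_{\mathcal{H}}^2\,ds$, and a scaling argument based on \eqref{ft} shows $\|S(s,\cdot)\|_{\mathcal{H}}^2 = c\,s^{-\beta/2}$, so this integral equals $c\,\delta^{(2-\beta)/2}$. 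By the strong ellipticity hypothesis \textbf{P2}, this yields a deterministic lower bound $\geq c\rho^2\delta^{(2-\beta)/2}$; the Lipschitz error produced by freezing $\sigma_{il}$ is controlled in $L^p$ by \eqref{equa3} and turns out to be of strictly higher order in $\delta$.

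For $B_\xi$, each summand in $R_r^{(l),i}(t,x)$ is an integral over $[r,t]\subset[t-\delta,t]$ whose integrand is a Malliavin derivative of $\sigma_{ij}(u)$ or $b_i(u)$; repeated application of Burkholder's inequality for Hilbert-space-valued martingales together with \eqref{supderivative} produces the bound $\E[B_\xi^p]\leq C_p\,\delta^{(2-\beta)p/2+\alpha p}$ for some $\alpha>0$, uniformly in $\xi$ and in $(t,x)$. Chebyshev's inequality then yields, at fixed $\xi$,
\begin{equation*}
\P\bigl\{\xi^{\sf T}\gamma_{u(t,x)}\xi \leq \tfrac14 c\rho^2 \delta^{(2-\beta)/2}\bigr\} \leq C_p\,\delta^{\alpha p}.
\end{equation*}
To pass from this to the infimum over $\xi\in S^{d-1}$ I cover the sphere with $O(\delta^{-K})$ balls of small radius and use that $\xi\mapsto\xi^{\sf T}\gamma_{u(t,x)}\xi$ is Lipschitz with a random Lipschitz constant whose $L^p$ norm is controlled by \eqref{supderivative}. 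Optimising with $\delta = \epsilon^{2/(2-\beta)}$ and choosing $p$ large yields the small-ball bound for any desired $q$.

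The main obstacle is the moment estimate for $B_\xi$. Iterating \eqref{derivative} produces stochastic integrals whose integrands themselves involve Malliavin derivatives of $\sigma_{ij}(u)$, and one must combine the chain rule with the precise $\mathcal{H}$-norm identity $\int_0^\delta\|S(s,\cdot)\|_{\mathcal{H}}^2\,ds = c\,\delta^{(2-\beta)/2}$, together with its H\"older-in-$s$ refinements, to extract the strictly positive gain $\alpha$ over the main-term exponent. This analysis parallels \cite[Prop.~3.5]{Dalang2:05} and the proof of \cite[Thm.~6.2]{Nualart:07}; the essentially new point here is the observation that every constant appearing depends on $(t,x)$ only through the uniform bounds \eqref{equa3}, \eqref{supderivative} and through $\inf_{t\in I} t > 0$, which delivers the desired uniformity over $I\times J$.
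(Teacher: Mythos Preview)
Your proposal is correct and follows essentially the same route as the paper: restrict the quadratic form to $[t-\delta,t]$, freeze $\sigma$ to invoke \textbf{P2} together with the scaling identity $\int_0^\delta\|S(s,\cdot)\|_{\mathcal H}^2\,ds = c\,\delta^{(2-\beta)/2}$, and show the remainder has strictly higher order in $\delta$ (the paper carries out this last estimate as Lemma~\ref{lem:7}). Two minor variations worth noting: the paper freezes at $u(r,x)$ rather than $u(t,x)$, and in place of your sphere-covering step it simply observes that the error terms are bounded by $\|\xi\|^2$ times $\xi$-independent quantities, takes the supremum over $\xi$ directly inside the expectation, and then invokes the abstract small-ball-to-moment lemma \cite[Proposition~3.5]{Dalang2:05}.
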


\begin{proof}
Let $(t,x) \in I \times J$ be fixed, where $I\times J$ is a closed non-trivial rectangle of $(0,T] \times \R^k$. We write
\begin{equation*}
\textnormal{det} \gamma_{u(t,x)} \geq \biggl( \textnormal{inf}_{\xi \in
\mathbb{R}^d: \Vert \xi \Vert =1} (\xi^{T} \gamma_{u(t,x)}
\xi)\biggr)^d.
\end{equation*}

Let $\xi \in \mathbb{R}^d$ with $\Vert \xi \Vert=1$ and fix $\epsilon \in (0,1)$. Using (\ref{derivative}), we see that
\begin{align*}
\xi^{T} \gamma_{u(t,x)} \xi &\geq \sum_{l=1}^d \int_{t-\epsilon}^t dr
\, \bigg\Vert \sum_{i=1}^d D^{(l)}_{r}(u_i(t,x)) \xi_i \bigg\Vert_{\mathcal{H}}^2 \\
&= \sum_{l=1}^d \int_{t-\epsilon}^t dr
\, \bigg\Vert \sum_{i=1}^d \sigma_{i,l}(u(r,\cdot)) S(t-r,x -\cdot) \xi_i + \sum_{i=1}^d a_i(l,r,t,x)\xi_i \bigg\Vert_\cH^2,
\end{align*}
where, for $r < t$,
\begin{equation} \begin{split}\label{eq4.1}
a_i(l,r,t,x)=& \int_r^t \int_{\R^k} S(t-\theta,x-\eta) \sum_{j=1}^d D^{(l)}_{r} (\sigma_{i,j}(u(\theta,\eta))) \, M^j(d\theta, d \eta) \\
& + \int_r^t d\theta \int_{\R^k} d\eta \, S(t-\theta,x-\eta) D^{(l)}_{r} (b_i(u(\theta,\eta))).
\end{split}
\end{equation}
We use the inequality
\begin{equation} \label{sumaqua}
\Vert a+b \Vert_{\mathcal{H}}^2 \geq \frac{2}{3} \Vert a \Vert_{\mathcal{H}}^2-2 \Vert b \Vert_{\mathcal{H}}^2,
\end{equation}
to see that
\begin{equation*}
\xi^{\sf T} \gamma_{u(t,x)} \xi \geq \frac{2}{3} \sum_{l=1}^d \int_{t-\epsilon}^t dr
\, \Vert (\xi^{\sf T} \cdot \sigma(u(r,\cdot)))_l\, S(t-r,x -\cdot)
\Vert_\cH^2 - 2 A_3,
\end{equation*}
where
\begin{equation*}
A_3 = \int_{t-\epsilon}^t dr \, \sum_{l=1}^d
\bigg\Vert \sum_{i=1}^d a_i(l,r,t,x) \,\xi_i \bigg\Vert_{\mathcal{H}}^2.
\end{equation*}
The same inequality \eqref{sumaqua} shows that
\begin{equation*}
\sum_{l=1}^d \int_{t-\epsilon}^t dr
\, \Vert (\xi^{\sf T} \cdot \sigma(u(r,\cdot)))_l\, S(t-r,x -\cdot)
\Vert_\cH^2 \geq \frac{2}{3} A_1 - 2 A_2,
\end{equation*}
where
\begin{equation} \label{add}
\begin{split} 
A_{1}&=\int_{t-\epsilon}^t dr \,\sum_{l=1}^d \Vert ({\xi}^{\sf T} \cdot \sigma(u(r,x)))_l\, S(t-r,x -\cdot)
\Vert_\cH^2, \\ 
A_{2}&=\int_{t-\e}^t dr
\sum_{l=1}^d \Vert ({\xi}^{\sf T} \cdot (\sigma(u(r,\cdot))-\sigma(u(r,x))) )_l\, S(t-r,x -\cdot)
\Vert_\cH^2.
\end{split}
\end{equation}
Note that we have added and subtracted a ``localized" term so as to be able to use the ellipticity property of $\sigma$ (a similar idea is used in
\cite{Millet:99} in dimension 1).

Hypothesis {\bf P2} and Lemma \ref{scalar} together yield $A_1 \geq
C \epsilon^{\frac{2-\beta}{2}}$, where $C$ is uniform over $(t,x)
\in I \times J$.

Now, using the Lipschitz property of $\sigma$ and H\"older's inequality with respect to the measure $\Vert y-z \Vert^{-\beta} S(t-r,x-y) S(t-r,x-z) \, dr dy dz$, we get that for $q \geq 1$,
\begin{align*} \nonumber
\E \, \left[\sup_{\xi\in\R^d: \,\Vert \xi\Vert = 1}\vert A_2 \vert^q \right] &\leq \biggl(\int_{t-\epsilon}^t dr \, \int_{\R^k} dy \,
\int_{\R^k} dz \, \Vert y-z \Vert^{-\beta} S(t-r,x-y) S(t-r,x-z)\biggr)^{q-1} \\ \nonumber
& \qquad \times \biggl(\int_{t-\epsilon}^t dr \, \int_{\R^k} dy \,
\int_{\R^k} dz \, \Vert y-z \Vert^{-\beta} S(t-r,x-y) S(t-r,x-z) \\
& \qquad \qquad \qquad \qquad\times \E \, [\Vert u(r,y)-u(r,x )\Vert^q \Vert u(r,z)-u(r,x )\Vert^q]\biggr).
\end{align*}
Using Lemma \ref{scalar} and (\ref{equa3}) we get that for any $q \geq 1$ and $\gamma \in (0, 2-\beta)$,
\begin{equation*}
\E \, [\vert A_2 \vert^q ] \leq C \epsilon^{(q-1)\frac{2-\beta}{2}} \times \Psi,
\end{equation*}
where
\begin{equation*}
\Psi=\int_0^{\epsilon} dr \, \int_{\R^k} dy \,
\int_{\R^k} dz \, \Vert y-z \Vert^{-\beta} S(r,x-y) S(r,x-z) \Vert y-x \Vert^{\frac{\gamma q}{2}}\Vert z-x \Vert^{\frac{\gamma q}{2}}.
\end{equation*}
Changing variables $[\tilde{y}=\frac{x-y}{\sqrt{r}}, \tilde{z}=\frac{x-z}{\sqrt{r}}]$, this becomes
\begin{align*} \nonumber
\Psi&=\int_0^{\epsilon} dr \, r^{-\frac{\beta}{2}+\frac{\gamma q}{2}}
\int_{\R^k} d\tilde{y} \,
\int_{\R^k} d \tilde{z} \, S(1,\tilde{y}) S(1,\tilde{z })\Vert \tilde{y}-\tilde{z} \Vert^{-\beta} \Vert \tilde{y} \Vert^{\frac{\gamma q}{2}} \Vert \tilde{z} \Vert^{\frac{\gamma q}{2}} \\
&= C \epsilon^{\frac{2-\beta}{2}+\frac{\gamma q}{2}}.
\end{align*}
Therefore, we have proved that for any $q \geq 1$ and $\gamma \in (0, 2-\beta)$,
\begin{equation} \label{equa45}
\E \, \biggl[\sup_{\xi \in \R^d: \Vert \xi \Vert=1} \vert A_2 \vert^q \biggr] \leq C \epsilon^{\frac{2-\beta}{2}q+\frac{\gamma }{2}q},
\end{equation}
where $C$ is uniform over $(t,x) \in I \times J$.

On the other hand, applying Lemma \ref{lem:7} with $s=t$, we find that for any $q \geq 1$,
\begin{equation*}
\E \, \biggl[\sup_{\xi \in \R^d: \Vert \xi \Vert=1} \vert A_3 \vert^q \biggr] \leq C \epsilon^{(2-\beta) q },
\end{equation*}
where $C$ is uniform over $(t,x) \in I \times J$.

Finally, we apply \cite[Proposition 3.5]{Dalang2:05} with $Z:=\inf_{
\Vert \xi \Vert=1}(\xi^{\sf T} \gamma_{u(t,x)} \xi)$,
$Y_{1,\epsilon}=Y_{2,\epsilon}=\sup_{ \Vert \xi \Vert=1}(\vert A_{2}
\vert + \vert A_3 \vert)$, $\epsilon_0=1$,
$\alpha_1=\alpha_2=\frac{2-\beta}{2}$, and
$\beta_1=\beta_2=\frac{2-\beta}{2}+\frac{\gamma}{2}$, for any
$\gamma \in (0, 2-\beta)$, to conclude that for any $p\geq1$,
\begin{equation*}
\E \, [(\textnormal{det} \, \gamma_{u(t,x)})^{-p} \bigr]\leq C(p),
\end{equation*}
where the constant $C(p) < \infty$ does not depend on $(t,x) \in I \times J$.
\end{proof}

In \cite[Theorem 3.2]{Marquez:01} the existence and smoothness of the density of the solution of equation (\ref{equa1}) with one single equation
($d=1$) was proved (see also \cite[Theorem 6.2]{Nualart:07}). The extension of this fact for a system of $d$ equations is given in the next proposition.

\begin{prop} \label{unif}
Fix $t>0$ and $x \in \R^k$. Assume hypotheses \textnormal{{\bf P1}} and \textnormal{\bf {P2}}. Then the law of $u(t,x)$, solution of
\textnormal{(\ref{equa1})}, is absolutely continuous with respect to Lebesgue measure on $\R^d$. Moreover, its density $p_{t,x}(\cdot)$ is
$C^{\infty}$.
\end{prop}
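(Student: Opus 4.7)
The plan is to apply the criterion of Theorem \ref{3t1} (the global Malliavin nondegeneracy criterion) directly to the random vector $u(t,x) = (u_1(t,x), \ldots, u_d(t,x))$, using results already established earlier in the paper. Since the conclusion of Theorem \ref{3t1} is precisely that the law of a nondegenerate random vector admits a $C^\infty$ density, the entire proof reduces to verifying its two hypotheses.

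First, I would verify condition (i) of Theorem \ref{3t1}, namely that $u(t,x) \in (\mathbb{D}^\infty)^d$. This is already stated in the paragraph following \eqref{supderivative}: the Malliavin derivatives of all orders of $u_i(t,x)$ exist and satisfy systems of SPDEs obtained by iterating \eqref{derivative}, and by \cite[Prop.~6.1]{Nualart:07} one has the uniform bound
\begin{equation*}
\sup_{(t,x) \in [0,T] \times \R^k} \E\bigl[\Vert D^m(u_i(t,x))\Vert^p_{(\cH^d_T)^{\otimes m}}\bigr] < \infty
\end{equation*}
for every $p \geq 1$, $m \geq 1$ and $i \in \{1,\dots,d\}$. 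Combined with the moment bounds on $u_i(t,x)$ itself already recorded earlier, this places each coordinate in $\mathbb{D}^{m,p}$ for all $m,p$, hence in $\mathbb{D}^\infty$, so that $u(t,x) \in (\mathbb{D}^\infty)^d$.

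Second, I would verify condition (ii) of Theorem \ref{3t1}, which asks that the Malliavin matrix $\gamma_{u(t,x)} = (\langle Du_i(t,x), Du_j(t,x)\rangle_{\cH_T^d})_{1 \le i,j \le d}$ be a.s.\ invertible and that $(\det \gamma_{u(t,x)})^{-1} \in L^p(\Omega)$ for every $p \geq 1$. This is exactly the content of Proposition \ref{det}: choosing any closed non-trivial rectangle $I \times J \subset (0,T] \times \R^k$ with $(t,x) \in I \times J$, the proposition gives
\begin{equation*}
\E\bigl[(\det \gamma_{u(t,x)})^{-p}\bigr] < \infty \qquad \text{for every } p \geq 1.
\end{equation*}
In particular $\det \gamma_{u(t,x)} > 0$ a.s., so $\gamma_{u(t,x)}$ is a.s.\ invertible, and all negative moments of its determinant are finite.

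Having checked both hypotheses, Theorem \ref{3t1} yields at once that the law of $u(t,x)$ on $\R^d$ is absolutely continuous with respect to Lebesgue measure and that its density $p_{t,x}(\cdot)$ is $C^\infty$. There is no substantive new difficulty here: the only real technical obstacle—the nondegeneracy of the Malliavin matrix in a system of SPDEs, where the ellipticity of $\sigma$ must be combined with a careful localization and the H\"older regularity of $u$—was already overcome in the proof of Proposition \ref{det}. The present proposition is essentially a packaging of Proposition \ref{det} together with the known $\mathbb{D}^\infty$-regularity of $u(t,x)$.
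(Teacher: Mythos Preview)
The proposal is correct and takes essentially the same approach as the paper: the paper's own proof is the single sentence ``This is a consequence of Theorem \ref{3t1} and Proposition \ref{det},'' and you have simply unpacked this by explicitly checking the two hypotheses of Theorem \ref{3t1} via \eqref{supderivative} and Proposition \ref{det}.
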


\begin{proof}
This is a consequence of Theorem \ref{3t1} and Proposition \ref{det}.
\end{proof}

\noindent{\em Proof of Theorem \ref{t2}(a).}
This follows directly from Proposition \ref{det} and (\ref{supderivative}), using Proposition 
\ref{norm}.
\hfill $\Box$
\vskip 16pt 

\section{Gaussian upper bound for the bivariate density}\label{sec5}

The aim of this section is to prove Theorem \ref{t2}(b).

\subsection{Upper bound for the derivative of the increment}

\begin{prop} \label{derivat}
Assume hypothesis \textnormal{{\bf P1}}.
Then for any $T>0$ and $p\geq 1$, there exists $C:=C(T,p)>0$ such that
for any $0 \leq s \leq t \leq T$, $x, y \in \R^k$, $m \geq 1$, $i \in \{1,...,d\}$, and $\gamma \in (0, 2-\beta)$,
\begin{equation*}
\Vert D^m(u_i(t,x)-u_i(s,y)) \Vert_{L^p(\Omega; (\mathcal{H}_T^d)^{\otimes m})} \leq C ( \vert t-s \vert^{\gamma/2}
+\Vert x-y\Vert^{\gamma})^{1/2}.
\end{equation*}
\end{prop}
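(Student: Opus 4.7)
The plan is to proceed by induction on $m$, relying on the SPDE satisfied by the iterated Malliavin derivative (equation \eqref{derivative} for $m=1$ and its iterated form \cite[(6.29)]{Nualart:07} for $m \geq 2$), together with the uniform $L^p$-moment bound \eqref{supderivative}, the boundedness of $\sigma$ and of its derivatives of all positive orders granted by {\bf P1}, and the H\"older estimate \eqref{equa3}. We may assume throughout that $0 \leq s \leq t \leq T$ and $\|x-y\| \leq 1$; for $\|x-y\|$ large the claim follows directly from the uniform moment bound \eqref{supderivative}.

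For the base case $m=1$, decompose $D^{(l)}_{r}(u_i(t,x) - u_i(s,y)) = A_r^{(l)} + B_r^{(l)} + C_r^{(l)}$ in accordance with the three summands of \eqref{derivative}, where
$$A_r^{(l)} = \sigma_{il}(u(r,\cdot))\,[S(t-r, x-\cdot)\mathbf{1}_{[0,t]}(r) - S(s-r, y-\cdot)\mathbf{1}_{[0,s]}(r)],$$
and $B^{(l)}$ (resp.\ $C^{(l)}$) is the difference of the two Dalang-Walsh stochastic (resp.\ drift) integrals at $(t,x)$ and $(s,y)$. Using the boundedness of $\sigma$, the $\mathcal{H}_T^d$-norm of $A$ reduces to the Fourier integral
$$\int_0^T dr \int_{\mathbb{R}^k} d\xi\,\|\xi\|^{\beta-k}\,\bigl|\mathcal{F}S(t-r,\cdot)(\xi) e^{-2\pi i \xi \cdot x}\mathbf{1}_{[0,t]}(r) - \mathcal{F}S(s-r,\cdot)(\xi) e^{-2\pi i \xi \cdot y}\mathbf{1}_{[0,s]}(r)\bigr|^2,$$
which is exactly the object controlled in the proof of Proposition \ref{holderoptimal} and is thus bounded by $C(|t-s|^{(2-\beta)/2} + \|x-y\|^{2-\beta})$. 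For $B^{(l)}$, split the time integration into the common window $[r,s]$ (where the kernel difference $S(t-\theta, x-\cdot) - S(s-\theta, y-\cdot)$ appears) and the extra window $(s,t]$ (where only $S(t-\theta, x-\cdot)$ remains); apply Burkholder-Davis-Gundy, use the Lipschitz property of $\sigma$ combined with \eqref{supderivative} to bound the integrand pointwise in $(\theta,\eta)$, and estimate the resulting covariance integrals by the same Fourier bound. The drift term $C^{(l)}$ is handled by Minkowski's inequality. Summing over $l$, taking $L^p(\Omega)$-norms, and exploiting $\|x-y\| \leq 1$ to pass from the exponent $2-\beta$ to any $\gamma<2-\beta$ delivers the $m=1$ conclusion.

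For the inductive step $m \geq 2$, the iterated equation has the same structure: a deterministic ``free'' term in which $\sigma_{il}(u(r_1,\cdot))$ is replaced by an expression assembled by the chain rule out of factors $D^{j_1}u, \ldots, D^{j_\nu}u$ with $j_1+\cdots+j_\nu = m-1$ and bounded derivatives of $\sigma$, multiplied by $S(t-r_1, x-\cdot)$; plus a stochastic integral and a drift integral in which $D^m u$ enters linearly through $D^m\sigma_{i,j}(u(\theta,\eta))$ and $D^m b_i(u(\theta,\eta))$. Subtracting the $(t,x)$ and $(s,y)$ versions produces analogues of $A,B,C$ that are treated identically: the free piece uses the same Fourier estimate together with H\"older's inequality in $\Omega$ to distribute the lower-order derivative factors (each uniformly controlled through \eqref{supderivative}); the stochastic and drift pieces are controlled by Burkholder-Davis-Gundy and Minkowski respectively, and, where terms of the form $u(\theta,\eta)-u(\theta',\eta')$ arise from the chain rule, by \eqref{equa3}---this is where the restriction $\gamma < 2-\beta$ enters. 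A Gronwall-type iteration in the time variable absorbs the contributions linear in $D^m u$ coming from the stochastic and drift integrals and closes the induction.

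The main obstacle is the combinatorial bookkeeping for the chain-rule expansion of $D^m[\sigma_{i,j}(u(\theta,\eta))]$ and $D^m[b_i(u(\theta,\eta))]$: one must match each multilinear contribution with an appropriate use of H\"older's inequality in $\Omega$ so that every factor is bounded either by \eqref{supderivative} or by the inductive hypothesis, with constants uniform in $(\theta,\eta)$. A secondary delicate point is the splitting of the stochastic integrals into the common and extra time windows so that the heat-kernel difference, rather than two separately growing summands, drives the decay; this is what produces the maximal exponent $(2-\beta)/2$ and leaves a margin comfortably accommodating any $\gamma<2-\beta$ coming from \eqref{equa3}.
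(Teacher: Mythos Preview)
Your overall scheme (induction on $m$, three-term decomposition from \eqref{derivative}) matches the paper's, but the execution of the base case has a real gap. The sentence ``using the boundedness of $\sigma$, the $\mathcal{H}_T^d$-norm of $A$ reduces to the Fourier integral'' is not justified: $\mathcal{H}$ carries the norm $\|\phi\|_{\mathcal{H}}^2=\int\|\xi\|^{\beta-k}|\mathcal{F}\phi(\xi)|^2\,d\xi$, and pointwise multiplication by a merely bounded function is \emph{not} a bounded operator on this space (the weight $\|\xi\|^{\beta-k}$ is singular at the origin, so convolution on the Fourier side with $\widehat{\sigma_{il}(u(r,\cdot))}$ can shift mass toward $\xi=0$ and enlarge the norm). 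Consequently $\|\sigma_{il}(u(r,\cdot))\,g(r,\cdot)\|_{\mathcal{H}}^2\le C\|g(r,\cdot)\|_{\mathcal{H}}^2$ is false in general, and the reduction to the integral appearing in Proposition~\ref{holderoptimal} (which, incidentally, gives a \emph{lower} bound; the matching upper bound is \eqref{rdC12}, from \cite{Sanz:00}) is unavailable. The same obstruction reappears in your treatment of $B$: after Burkholder's inequality for the $\mathcal{H}_T^d$-valued stochastic integral, the quadratic variation is again an $\mathcal{H}$-norm of a product of a signed kernel difference with a bounded integrand, and the $L^p$-estimates you would need (e.g.\ \cite[(3.13)]{Nualart:07}, \cite[Theorem~6.1]{Sanz:05}) are proved for a single nonnegative kernel $S$, not for differences $S(t-\theta,x-\cdot)-S(s-\theta,y-\cdot)$.

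The paper sidesteps this precisely via the \emph{factorization method} of \cite{Sanz:02}. For the free term $A_1$ it first reverses Burkholder's inequality, converting $\E\bigl[(\int_0^T\sum_j\|g\,\sigma_{ij}(u(r,\cdot))\|_{\mathcal{H}}^2\,dr)^{p/2}\bigr]$ back into the $p$-th moment of the scalar stochastic integral $\int\!\!\int g\,\sigma\,dM$, and then cites \cite{Sanz:02} for the H\"older bound on that integral. For the stochastic and drift pieces $A_{2,\cdot},A_{3,\cdot}$ (your $B,C$) it uses the factorization identity \eqref{fact} to rewrite each as a deterministic integral of an auxiliary process $Y_\alpha^i$ (resp.\ $Z_\alpha^i$) against the kernel $\psi_\alpha(r,z)=S(r,z)r^{\alpha-1}$; the $L^p(\Omega;\mathcal{H}_T^d)$-norms of $Y_\alpha^i,Z_\alpha^i$ are bounded uniformly by \eqref{supderivative}, and the remaining deterministic increments of $\psi_\alpha$ are handled by Lemma~\ref{lemSS}. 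No Gronwall argument is needed. Your outline becomes correct once these factorization steps replace the direct Fourier/BDG bounds.
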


\begin{proof}
Assume $m=1$ and fix $p\geq 2$, since it suffices to prove the statement in this case. Let 
$$
   g_{t,x ; s,y}(r, \cdot):=S(t-r,x-\cdot) 1_{\{ r \leq t\} }-S(s-r,y-\cdot) 1_{\{ r\leq s\} }.
$$
Using (\ref{derivative}), we see that
\begin{equation*}
\Vert D(u_i(t,x)-u_i(s,y)) \Vert^p_{L^p(\Omega; \mathcal{H}_T^d)} \leq c_p(A_1+A_{2,1}+A_{2,2}+A_{3,1}+A_{3,2}),
\end{equation*}
where
\begin{equation*} \begin{split}
A_1&=\E \, \biggl[\biggl(\int_0^T dr \sum_{j=1}^d \big\Vert g_{t,x ; s,y}(r,\cdot) \sigma_{ij}(u(r,\cdot)) \big\Vert^2_{\mathcal{H}} \biggr)^{p/2} \biggr], \\ \nonumber
A_{2,1}&= \E \, \biggl[ \bigg\Vert \int_0^T \int_{\R^k} g_{t,x ; t,y}(\theta,\eta) \sum_{j=1}^d D (\sigma_{i,j}(u(\theta,\eta)))
M^j(d\theta, d \eta) \bigg\Vert^p_{\mathcal{H}^d_T} \biggr], \\
A_{2,2}&= \E \, \biggl[ \bigg\Vert \int_0^T \int_{\R^k} g_{t,y ; s,y}(\theta,\eta) \sum_{j=1}^d D (\sigma_{i,j}(u(\theta,\eta)))
M^j(d\theta, d \eta) \bigg\Vert^p_{\mathcal{H}^d_T} \biggr], \\
A_{3,1}&=\E \, \biggl[ \bigg\Vert\int_0^T d\theta \int_{\R^k} d \eta \,
g_{t,x ; t,y}(\theta,\eta) D (b_i(u(\theta,\eta))) \bigg\Vert^p_{\mathcal{H}^d_T}\biggr], \\
A_{3,2}&=\E \, \biggl[ \bigg\Vert\int_0^T d\theta \int_{\R^k} d \eta \,
g_{t,y ; s,y}(\theta,\eta) D (b_i(u(\theta,\eta))) \bigg\Vert^p_{\mathcal{H}^d_T}\biggr].
\end{split}
\end{equation*}

Using Burkholder's inequality, \eqref{eqdom} and (\ref{equa3}), we see that for any $\gamma \in (0, 2-\beta)$,
\begin{equation}\label{5.1a} 
A_1 \leq c_p
\E \, \biggl[\bigg\vert\int_0^T \int_{\R^k} g_{t,x ; s,y}(\theta,\eta) \sum_{j=1}^d  \sigma_{ij}(u(\theta,\eta)) M^j(d\theta, d \eta) \bigg\vert^{p} \biggr] .
\end{equation}
In order to bound the right-hand side of \eqref{5.1a}, one proceeds as in \cite{Sanz:02}, where the so-called ``factorization method" is used. In fact, the calculation used in \cite{Sanz:02} in order to obtain \cite[(10)]{Sanz:02} and \cite[(19)]{Sanz:02} (see in particular the treatment of the terms $I_2(t,h,x)$, $I_3(t,h,x)$, and $J_2(t,x,z)$ in this reference) show that for any $\gamma \in (0, 2-\beta)$,
\begin{equation*}
  A_1 \leq c_p ( \vert t-s \vert^{\frac{\gamma}{2}}
+\Vert x-y\Vert^{\gamma})^{\frac{p}{2}}.
\end{equation*}
We do not expand on this further since we will be using this method several times below, with details

In order to bound the terms $A_{2,1}$ and $A_{2,2}$, we will also use the factorisation method used in \cite{Sanz:02}. That is, using
the semigroup property of $S$, the Beta function and a stochastic Fubini's theorem (whose assumptions can be seen to be satisfied, see e.g. \cite[Theorem 2.6]{Walsh:86}), we see that, for any $\alpha \in (0,\frac{2-\beta}{4})$,
\begin{equation} \label{fact} \begin{split}
&\int_0^t \int_{\R^k} S(t-\theta,x-\eta) \sum_{j=1}^d D (\sigma_{i,j}(u(\theta,\eta))) M^j(d\theta, d \eta) \\
&\qquad \qquad = \frac{\sin(\pi \alpha)}{\pi} \int_0^t dr \int_{\R^k} dz\, S(t-r,x-z) (t-r)^{\alpha-1} Y^i_{\alpha}(r,z),
\end{split}
\end{equation}
where $Y=(Y^i_{\alpha}(r,z), r \in [0,T], z \in \R^k)$ is the $\mathcal{H}^d_T$-valued process defined by
\begin{equation*}
Y^i_{\alpha}(r,z)=\int_0^r \int_{\R^k} S(r-\theta,z-\eta) (r-\theta)^{-\alpha}
\sum_{j=1}^d D (\sigma_{i,j}(u(\theta,\eta))) M^j(d\theta, d \eta).
\end{equation*}

Let us now bound the $L^p(\Omega; \mathcal{H}^d_T)$-norm of the process $Y$.
Using \cite[(3.13)]{Nualart:07} and the boundedness of the derivatives of the coefficients of $\sigma$, we see that for any $p\geq 2$,
\begin{equation*}
\E \biggl[ \Vert Y^i_{\alpha}(r,z) \Vert_{\mathcal{H}^d_T}^p \biggr] \leq c_p
\sum_{i=1}^d \sup_{ (t,x) \in [0,T] \times \R^k} \E \biggl[ \Vert D (u_i(t,x)) \Vert_{\mathcal{H}_T^d}^p \biggr] (\nu_{r,z})^{p/2},
\end{equation*}
where 
\begin{equation} \label{nu}
\nu_{r,z}:=\Vert S(r-\ast, z -\cdot) (r-\ast)^{-\alpha} \Vert_{\mathcal{H}^d_r}.
\end{equation}
We have that
\begin{equation} \label{eqau}
\begin{split}
\nu_{r,z}
&=\int_0^r ds \int_{\R^k} d \xi \, \Vert \xi \Vert^{\beta-k} (r-s)^{-2 \alpha} \exp(-2\pi^2 (r-s) \Vert \xi \Vert^2) \\
&=\int_0^r ds\, (r-s)^{-2 \alpha-\frac{\beta}{2}} \int_{\R^k} d \tilde \xi \, \Vert \tilde \xi \Vert^{\beta-k}  \exp(-2\pi^2 \Vert \tilde \xi \Vert^2) \\
&=r^{\frac{2-\beta}{2}-2\alpha}.
\end{split}
\end{equation}
Hence, we conclude from (\ref{supderivative}) that
\begin{equation} \label{lemaY}
\sup_{(r,z) \in [0,T] \times \R^k} \E[ \Vert Y^i_{\alpha}(r,z) \Vert_{\mathcal{H}^d_T}^p] <+\infty.
\end{equation}

Now, in order to bound $A_{2,1}$, first note that by (\ref{fact}) we can write
\begin{equation*}
A_{2,1}\leq \E \biggl[\bigg\Vert \int_0^t dr \int_{\R^k} dz\, (\psi_{\alpha}(t-r, x-z)-\psi_{\alpha}(t-r, y-z)) Y_{\alpha}^i(r,z) \bigg\Vert^p_{\mathcal{H}^d_T} \biggr],
\end{equation*}
where $\psi_{\alpha}(t, x)=S(t,x) t^{\alpha-1}$. Then, appealing to Minkowski's inequality, (\ref{lemaY}) and Lemma \ref{lemSS}(a) below,
we find that, for any $\gamma \in (0, 4 \alpha)$,
\begin{equation*} \begin{split}
A_{2,1} &\leq c_p \biggl( \int_0^t dr \int_{\R^k} dz \,\vert \psi_{\alpha}(t-r, x-z)-\psi_{\alpha}(t-r, y-z) \vert \biggr)^p \\
&\qquad \qquad \qquad \times
\sup_{(r,z) \in [0,T] \times \R^k} \E[ \Vert Y^i_{\alpha}(r,z) \Vert_{\mathcal{H}^d_T}^p] \\
&\leq c_p \Vert x-y \Vert^{\frac{\gamma}{2} p}.
\end{split}
\end{equation*}

We next treat $A_{2,2}$. Using (\ref{fact}), we have that $A_{2,2}\leq c_{p,\alpha} (A_{2,2,1}+A_{2,2,2})$, where
\begin{equation*}
\begin{split}
A_{2,2,1}&=\E \biggl[\bigg\Vert \int_0^s dr \int_{\R^k} dz \, (\psi_{\alpha}(t-r, x-z)-\psi_{\alpha}(s-r, x-z)) Y_{\alpha}^i(r,z)\bigg\Vert^p_{\mathcal{H}^d_T} \biggr],\\
A_{2,2,2}&=\E \biggl[\bigg\Vert \int_s^t dr \int_{\R^k} dz \, \psi_{\alpha}(t-r, x-z) Y_{\alpha}^i(r,z) \bigg\Vert^p_{\mathcal{H}^d_T} \biggr].
\end{split}
\end{equation*}
Now, by Minkowski's inequality, (\ref{lemaY}) and Lemma \ref{lemSS}(b) below, we find that, for any $\gamma \in (0, 4 \alpha)$,
\begin{equation*} \begin{split}
A_{2,2,1} &\leq c_p \biggl( \int_0^s dr \int_{\R^k} dz \, \vert \psi_{\alpha}(t-r, x-z)-\psi_{\alpha}(s-r, x-z) \vert \biggr)^p \\
&\qquad \qquad \qquad \times
\sup_{(r,z) \in [0,T] \times \R^k} \E[ \Vert Y^i_{\alpha}(r,z) \Vert_{\mathcal{H}^d_T}^p] \\
&\leq c_p \vert t-s \vert^{\frac{\gamma}{4} p}.
\end{split}
\end{equation*}
In the same way, using Minkowski's inequality, (\ref{lemaY}) and Lemma \ref{lemSS}(c) below, for any $\gamma \in (0, 4\alpha)$, we have that
\begin{equation*} \begin{split}
A_{2,2,2} &\leq c_p \biggl( \int_s^t dr \int_{\R^k} dz \, \psi_{\alpha}(t-r, x-z) \biggr)^p
\sup_{(r,z) \in [0,T] \times \R^k} \E[ \Vert Y^i_{\alpha}(r,z) \Vert_{\mathcal{H}^d_T}^p] \\
&\leq c_p \vert t-s \vert^{\frac{\gamma}{4} p}.
\end{split}
\end{equation*}

   Finally, we bound $A_{3,1}$ and $A_{3,2}$, which can be written
\begin{equation*} \begin{split}
A_{3,1}&=\E \, \biggl[ \bigg\Vert\int_0^t d\theta \int_{\R^k} d \eta \,
(S(t-\theta,x-\eta)-S(t-\theta, y-\eta)) D (b_i(u(\theta,\eta))) \bigg\Vert^p_{\mathcal{H}^d_T}\biggr], \\
A_{3,2}&=\E \, \biggl[ \bigg\Vert\int_0^t d\theta \int_{\R^k} d \eta \,
S(t-\theta,y-\eta) D (b_i(u(\theta,\eta))) \\
&\qquad \qquad \qquad - \int_0^s d\theta \int_{\R^k} d \eta \,
S(s-\theta,y-\eta) D (b_i(u(\theta,\eta))) \bigg\Vert^p_{\mathcal{H}^d_T}\biggr].
\end{split}
\end{equation*}
The factorisation method used above is also needed in this case, that is, using the semigroup property of $S$, the Beta function and Fubini's theorem, we see that for any $\alpha \in (0,1)$,
\begin{equation*} \begin{split}
&\int_0^t d\theta \int_{\R^k} d\eta \, S(t-\theta,x-\eta) D (b_i(u(\theta,\eta))) \\
&\qquad \qquad = \frac{\sin(\pi \alpha)}{\pi} \int_0^t dr \int_{\R^k} dz S(t-r,x-z) (t-r)^{\alpha-1} Z^i_{\alpha}(r,z),
\end{split}
\end{equation*}
where $Z=(Z^i_{\alpha}(r,z), r \in [0,T], z \in \R^k)$ is the $\mathcal{H}^d_T$-valued process defined as
\begin{equation*}
Z^i_{\alpha}(r,z)=\int_0^r d\theta \int_{\R^k} d\eta \, S(r-\theta,z-\eta) (r-\theta)^{-\alpha}D (b_i(u(\theta,\eta))).
\end{equation*}
Hence, we can write
\begin{equation*}
A_{3,1} \leq \E \biggl[\bigg\Vert \int_0^t dr \int_{\R^k} dz\, (\psi_{\alpha}(t-r, x-z)-\psi_{\alpha}(t-r, y-z)) Z_{\alpha}^i(r,z) \bigg\Vert^p_{\mathcal{H}^d_T} \biggr],
\end{equation*}
and $A_{3,2}\leq c_{p,\alpha} (A_{3,2,1}+A_{3,2,2})$, where
\begin{equation*}
\begin{split}
A_{3,2,1}&=\E \biggl[\bigg\Vert \int_0^s dr \int_{\R^k} dz \, (\psi_{\alpha}(t-r, y-z)-\psi_{\alpha}(s-r, y-z)) Z_{\alpha}^i(r,z)\bigg\Vert^p_{\mathcal{H}^d_T} \biggr],\\
A_{3,2,2}&=\E \biggl[\bigg\Vert \int_s^t dr \int_{\R^k} dz \, \psi_{\alpha}(t-r, y-z) Z_{\alpha}^i(r,z) \bigg\Vert^p_{\mathcal{H}^d_T} \biggr].
\end{split}
\end{equation*}

We next compute the $L^p(\Omega; \mathcal{H}^d_T)$-norm for the process $Z$.
Using Minkowski's inequality and the boundedness of the derivatives of the coefficients of $b$, we get that
\begin{equation*}
E[ \Vert Z^i_{\alpha}(r,z) \Vert_{\mathcal{H}^d_T}^p] \leq c_p
\sum_{i=1}^d \sup_{ (t,x) \in [0,T] \times \R^k} \E \biggl[ \Vert D (u_i(t,x)) \Vert_{\mathcal{H}_T^d}^p \biggr] (\gamma_{r,z})^{p/2},
\end{equation*}
where
\begin{equation*} 
\gamma_{r,z}=\int_0^r d\theta \int_{\R^k} d\eta \, S(r-\theta,z-\eta) (r-\theta)^{-\alpha}=r^{1-\alpha}.
\end{equation*}
Hence, using (\ref{supderivative}), we conclude that
\begin{equation} \label{lemZ}
\sup_{(r,z) \in [0,T] \times \R^k} \E[ \Vert Z^i_{\alpha}(r,z) \Vert_{\mathcal{H}^d_T}^p] <+\infty.
\end{equation}

Then, proceeding as above, using Minkowski's inequality, (\ref{lemZ}) and 
Lemma \ref{lemSS}, we conclude that
for any $\gamma \in (0, 4\alpha)$,
\begin{equation*} \begin{split}
A_{3,1} +A_{3,2} \leq c_p (\Vert x-y \Vert^{\frac{\gamma}{2} p} +\Vert t-s \Vert^{\frac{\gamma}{4} p}).
\end{split}
\end{equation*}
This concludes the proof of the proposition for $m=1$. 

   The case $m>1$ follows along the same lines by induction using the stochastic partial differential equation satisfied by the iterated derivatives (cf. \cite[Proposition 6.1]{Nualart:07}).
\end{proof}

   The following lemma was used in the proof of Proposition \ref{derivat}.

\begin{lem} For $\alpha >0$, set $\psi_\alpha(t,x) = S(t,x) t^{\alpha - 1}$, $(t,x) \in \R_+ \times \R^k$.

   \textnormal{(a)} For  $ \alpha \in (0,\frac{2-\beta}{4})$, $\gamma \in (0,4\alpha)$, there is $c>0$ such that for all $t \in [0,T]$, $x,y \in \R^k$, and $\e \in [0,t]$,
$$
\int_{t-\e}^t dr \int_{\R^k} dz \,\vert \psi_{\alpha}(t-r, x-z)-\psi_{\alpha}(t-r, y-z) \vert \leq c \e^{\alpha - \frac{\gamma}{2}}\, \Vert x - y \Vert^{\gamma/2} .
$$

   \textnormal{(b)} For $ \alpha \in (0,\frac{2-\beta}{4})$, $\gamma \in (0,4\alpha)$, there is $c>0$ such that for all $s \leq t \in [0,T]$, $x,y \in \R^k$, and $\e \in [0,s]$,
$$
\int_{s - \e}^s dr \int_{\R^k} dz \, \vert \psi_{\alpha}(t-r, x-z)-\psi_{\alpha}(s-r, x-z) \vert
   \leq c \e^{\alpha - \frac{\gamma}{4}}\, \vert t-s\vert^{\gamma/4}.
$$

   \textnormal{(c)} For $ \alpha \in (0,\frac{2-\beta}{4})$, $\gamma \in (0,4\alpha)$, there is $c>0$ such that for all $s \leq t \in [0,T]$, $x,y \in \R^k$,
$$
   \int_s^t dr \int_{\R^k} dz \, \psi_{\alpha}(t-r, x-z) \leq c\, \vert t-s\vert^{\gamma/4}.
$$
\label{lemSS}
\end{lem}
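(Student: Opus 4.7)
\smallskip
\noindent\textbf{Proof plan for Lemma \ref{lemSS}.}
The three bounds follow a common template: extract the $u^{\alpha-1}$ time singularity from $\psi_\alpha(u,x) = u^{\alpha-1} S(u,x)$, control the $L^1_z$-norm of the relevant heat-kernel factor, and finally integrate in the time variable.

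Part (c) is immediate from $\int_{\R^k} S(t-r,x-z)\,dz = 1$, which reduces the left-hand side to $\int_s^t (t-r)^{\alpha-1}\,dr = \alpha^{-1}(t-s)^\alpha$. Since $\gamma/4 < \alpha$ by hypothesis and $t-s \leq T$, the factor $(t-s)^{\alpha - \gamma/4}$ is absorbed into a constant.

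For part (a), I would substitute $u = t-r$ and estimate $\phi(u) := \int_{\R^k} |S(u,x-z) - S(u,y-z)|\,dz$. The scaling $\tilde z = (x-z)/\sqrt u$ shows $\phi(u) = \Phi(\|x-y\|/\sqrt u)$, where $\Phi(v) = \int_{\R^k}(2\pi)^{-k/2}|e^{-\|w\|^2/2} - e^{-\|w+v\|^2/2}|\,dw$. The triangle inequality gives $\Phi(v)\leq 2$, while the fundamental theorem of calculus together with $\int_{\R^k}\|w\|e^{-\|w\|^2/2}\,dw < \infty$ gives $\Phi(v) \leq c\|v\|$; interpolating, $\Phi(v) \leq c\|v\|^\theta$ for every $\theta \in [0,1]$. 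Picking $\theta = \gamma/2$, which lies in $(0,1]$ since $\gamma < 4\alpha < 2-\beta < 2$, the resulting time integral is
$$\int_0^\e u^{\alpha-1} \cdot c\bigl(\|x-y\|/\sqrt u\bigr)^{\gamma/2}\,du = c\,\|x-y\|^{\gamma/2} \int_0^\e u^{\alpha - 1 - \gamma/4}\,du = c'\,\e^{\alpha - \gamma/4}\,\|x-y\|^{\gamma/2},$$
finite exactly because $\gamma < 4\alpha$. Since $\e \leq T$, the factor $\e^{\gamma/4} \leq T^{\gamma/4}$ lets one replace $\e^{\alpha-\gamma/4}$ by $T^{\gamma/4}\,\e^{\alpha-\gamma/2}$, matching the exponent in the statement.

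For part (b), I would split
\begin{align*}
\psi_\alpha(t-r, x-z) - \psi_\alpha(s-r, x-z) &= [(t-r)^{\alpha-1} - (s-r)^{\alpha-1}]\,S(t-r, x-z) \\
&\quad + (s-r)^{\alpha-1}\,[S(t-r, x-z) - S(s-r, x-z)].
\end{align*}
The first piece is handled by interpolating $|(t-r)^{\alpha-1} - (s-r)^{\alpha-1}|$ between the trivial estimate $(s-r)^{\alpha-1}$ and the mean-value estimate $(1-\alpha)(s-r)^{\alpha-2}(t-s)$, yielding $c\,(s-r)^{\alpha-1-\theta}(t-s)^{\theta}$ for $\theta \in [0,1]$, combined with $\int_{\R^k} S(t-r, x-z)\,dz = 1$. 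For the second piece I would use the semigroup identity $S(t-r,\cdot) = S(t-s,\cdot) \ast S(s-r,\cdot)$ to rewrite
$$S(t-r, x-z) - S(s-r, x-z) = \int_{\R^k} S(t-s, w)\,[S(s-r, x-z-w) - S(s-r, x-z)]\,dw,$$
so that integrating in $z$ and applying the part (a) inner-integral bound produces $c\,(s-r)^{-\theta'/2}\,\E\|W_{t-s}\|^{\theta'} = c'\,(t-s)^{\theta'/2}(s-r)^{-\theta'/2}$, with $W_{t-s}$ a centered Gaussian of covariance $(t-s)I_k$. Choosing $\theta = \gamma/4$ and $\theta' = \gamma/2$ produces the common factor $(t-s)^{\gamma/4}$ and an integrand of order $(s-r)^{\alpha - 1 - \gamma/4}$ in $r$; its integral over $[s-\e,s]$ equals $c\,\e^{\alpha - \gamma/4}$, again finite by the hypothesis $\gamma < 4\alpha$. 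The only delicate point is the matching of the interpolation parameters $\theta,\theta'$ so that both pieces produce the same $(t-s)^{\gamma/4}$ factor while preserving time-integrability; this also explains why $\gamma \in (0, 4\alpha)$ is the natural range.
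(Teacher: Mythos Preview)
Your proof is correct and self-contained. The paper does not actually argue the lemma: it simply says that (a), (b), (c) are ``similar to'' or ``a consequence of'' estimates (21), (14), (15) in Sanz-Sol\'e and Sarr\`a \cite{Sanz:02}, so there is no detailed proof to compare against. What you have written is the standard route for such heat-kernel increment bounds---scaling plus interpolation between the trivial bound and the Lipschitz/mean-value bound---and is almost certainly what \cite{Sanz:02} does as well.

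One small observation worth making explicit: in part (a) your argument in fact yields the sharper exponent $\e^{\alpha-\gamma/4}$, and you then spend an extra line degrading it to the stated $\e^{\alpha-\gamma/2}$ via $\e^{\gamma/4}\le T^{\gamma/4}$. That step is fine for matching the statement, but the sharper bound $\e^{\alpha-\gamma/4}$ is the natural one (it is what makes the time integral converge, since $\alpha-\gamma/4>0$ while $\alpha-\gamma/2$ need not be positive), and it would serve equally well in every application of Lemma~\ref{lemSS}(a) in the paper. So the lemma as stated is slightly weaker than what your argument delivers, but this causes no harm.
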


\begin{proof} (a) This is similar to the proof of \cite[(21)]{Sanz:02}.

   (b) This is similar to the proof of \cite[(14)]{Sanz:02}.
   
   (c) This is a consequence of \cite[(15)]{Sanz:02}.
\end{proof}

\subsection{Study of the Malliavin matrix}

Let $T>0$ be fixed. For $s,t \in [0,T]$, $s \leq t$, and $x,y \in \R^k$
consider the $2d$-dimensional random vector
\begin{equation} \label{Z}
Z:=(u(s,y), u(t,x)-u(s,y)).
\end{equation}
Let $\gamma_Z$ be the Malliavin matrix of $Z$. Note that $\gamma_Z=((\gamma_Z)_{m,l})_{m,l=1,...,2d}$ is a symmetric $2d \times 2d$
random matrix with four $d \times d$ blocks of the form
\begin{equation*}
\gamma_Z =\left(
\begin{array}{ccc}
\gamma_Z^{(1)} & \vdots & \gamma_Z^{(2)}\\
\cdots & \vdots & \cdots \\
\gamma_Z^{(3)}& \vdots & \gamma_Z^{(4)}\\
\end{array}
\right),
\end{equation*}
where
\begin{align*}
\gamma_Z^{(1)} &= (\langle D(u_i(s,y)), D(u_j(s,y)) \rangle_{\mathcal{H}_T^d})_{i,j=1,...,d},\\
\nonumber \gamma_Z^{(2)} &= (\langle D(u_i(s,y)),
D(u_j(t,x)-u_j(s,y)) \rangle_{\mathcal{H}_T^d})_{i,j=1,...,d},\\
\gamma_Z^{(3)} &= (\langle D(u_i(t,x)-u_i(s,y)), D(u_j(s,y)) \rangle_{\mathcal{H}_T^d})_{i,j=1,...,d},\\
\gamma_Z^{(4)} &= (\langle D(u_i(t,x)-u_i(s,y)),
D(u_j(t,x)-u_j(s,y)) \rangle_{\mathcal{H}_T^d})_{i,j=1,...,d}.
\end{align*}
We let {\bf (1)} denote the set of couples $\{1,...,d\} \times \{1,...,d\}$, {\bf (2)} the set $\{1,...,d\} \times \{d+1,...,2d\}$,
{\bf (3)} the set $\{d+1,...,2d\} \times \{1,...,d\}$ and {\bf (4)} the set $\{d+1,...,2d\} \times \{d+1,...,2d\}$.

\vskip 12pt

The following two results follow exactly along the same lines as
\cite[Propositions 6.5 and 6.7]{Dalang2:05} using
(\ref{supderivative}) and Proposition \ref{derivat}, so their proofs
are omitted.

\begin{prop} \label{3p2}
Fix $T>0$ and let $I \times J\subset (0,T] \times \R^k$ be a closed non-trivial rectangle. Let $A_Z$ denotes the cofactor matrix of $\gamma_Z$. Assuming {\bf P1}, for any $p>1$ and $\gamma \in (0, 2-\beta)$, there is a constant $c_{\gamma,p,T}$ such that for any $(s,y), (t,x) \in I \times J$ with $(s,y) \neq (t,x)$, 
\begin{equation*}
\E \, [|(A_Z)_{m,l}|^p]^{1/p}\leq
\begin{cases}
c_{\gamma,p,T} ( \vert t-s\vert^{\gamma/2}
+\Vert x-y\Vert^{\gamma})^{d}&
\text{if \; $(m,l) \in {\bf (1)} $}, \\
c_{\gamma,p,T} ( \vert t-s\vert^{\gamma/2}
+\Vert x-y\Vert^{\gamma})^{d-\frac{1}{2}}&
\text{if \; $(m,l) \in {\bf (2)}$ or ${\bf (3)} $}, \\
c_{\gamma,p,T} ( \vert t-s\vert^{\gamma/2}
+\Vert x-y\Vert^{\gamma})^{d-1}& \text{if \; $(m,l) \in {\bf (4)}$}.
\end{cases}
\end{equation*}
\end{prop}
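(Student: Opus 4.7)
The plan is to express each cofactor $(A_Z)_{m,l}$, up to a sign, as the determinant of the $(2d-1)\times(2d-1)$ minor $M^{(l,m)}$ obtained from $\gamma_Z$ by deleting row $l$ and column $m$, and then to bound that determinant by expanding it via the Leibniz formula and applying the generalized H\"older inequality term by term.

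The first step is to estimate in $L^p(\Omega)$ the four types of entries of $\gamma_Z$. Set $\Delta := |t-s|^{\gamma/2}+\Vert x-y\Vert^\gamma$. By Cauchy--Schwarz in $\mathcal{H}_T^d$ together with (\ref{supderivative}),
$$
\bigl\Vert\langle Du_i(s,y),\,Du_j(s,y)\rangle_{\mathcal{H}_T^d}\bigr\Vert_{L^p(\Omega)} \le c_{p,T},
$$
so block $\mathbf{(1)}$ entries contribute the power $\Delta^0$. The same inequality combined with Proposition \ref{derivat} and (\ref{supderivative}) gives, for blocks $\mathbf{(2)}$ and $\mathbf{(3)}$,
$$
\bigl\Vert\langle Du_i(s,y),\,D(u_j(t,x)-u_j(s,y))\rangle_{\mathcal{H}_T^d}\bigr\Vert_{L^p(\Omega)} \le c_{\gamma,p,T}\,\Delta^{1/2};
$$
and a double application of Proposition \ref{derivat} yields, for block $\mathbf{(4)}$,
$$
\bigl\Vert\langle D(u_i(t,x)-u_i(s,y)),\,D(u_j(t,x)-u_j(s,y))\rangle_{\mathcal{H}_T^d}\bigr\Vert_{L^p(\Omega)} \le c_{\gamma,p,T}\,\Delta.
$$
Thus each entry of $\gamma_Z$ contributes a power $0$, $\tfrac12$ or $1$ of $\Delta$, according to whether it lies in block $\mathbf{(1)}$, $\mathbf{(2)}\cup\mathbf{(3)}$ or $\mathbf{(4)}$.

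The second step is combinatorial. Expand $\det M^{(l,m)} = \sum_{\sigma} \mathrm{sgn}(\sigma) \prod_{j} (M^{(l,m)})_{j,\sigma(j)}$ and apply the generalized H\"older inequality to each term. For a fixed permutation $\sigma$, let $a$, $b$, $c$, $d'$ denote the number of matched pairs $(j,\sigma(j))$ whose entry lies in the top-left, top-right, bottom-left, bottom-right block of $M^{(l,m)}$, respectively. The row and column counts of $M^{(l,m)}$ give four linear relations on $(a,b,c,d')$ (depending on whether $(m,l)$ belongs to $\mathbf{(1)}$, $\mathbf{(2)}\cup\mathbf{(3)}$ or $\mathbf{(4)}$); a short calculation shows that the exponent $\tfrac12(b+c)+d'$ of $\Delta$ in the resulting H\"older bound equals, independently of $a$, $d$, $d-\tfrac12$, or $d-1$ in the three respective cases. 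Summing over the $(2d-1)!$ permutations then yields the stated $L^p$ estimates.

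The key technical point is this invariance of the exponent of $\Delta$ across all terms of the Leibniz expansion: without it some term could produce a smaller power of $\Delta$ and spoil the bound. Once this combinatorial claim is in hand, the remainder of the argument is routine and, as the paper's text indicates, follows exactly along the lines of \cite[Proposition 6.5]{Dalang2:05}.
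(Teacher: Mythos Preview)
Your proposal is correct and follows exactly the approach of \cite[Proposition 6.5]{Dalang2:05}, to which the paper simply refers: bound the entries of $\gamma_Z$ blockwise using (\ref{supderivative}) and Proposition \ref{derivat}, expand the $(2d-1)\times(2d-1)$ minor via Leibniz, apply the generalized H\"older inequality, and check that the resulting power of $\Delta$ is the same for every permutation. Your verification of this last combinatorial invariance (via the row/column constraints on $a,b,c,d'$) is precisely the point of the argument, and it is carried out correctly.
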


\begin{prop} \label{dgamma}
Fix $T>0$ and let $I \times J\subset (0,T] \times \R^k$ be a closed non-trivial rectangle. Assuming {\bf P1}, for any $p>1$, $k \geq 1$, and $\gamma \in (0, 2-\beta)$, there is a constant $c_{\gamma,k,p,T}$ such that for any $(s,y), (t,x) \in I \times J$ with $(s,y) \neq (t,x)$,
\begin{equation*}
\E \, \big[\Vert D^k(\gamma_Z)_{m,l}
\Vert_{(\mathcal{H}_T^d)^{\otimes k}} ^{p}\big]^{1/p}\leq
\begin{cases}
c_{\gamma,k,p,T} &
\text{if \; $(m,l) \in {\bf (1)} $}, \\
c_{\gamma,k,p,T} ( \vert t-s\vert^{\gamma/2}
+\Vert x-y\Vert^{\gamma})^{1/2}&
\text{if \; $(m,l) \in {\bf (2)}$ or ${\bf (3)} $}, \\
c_{\gamma,k,p,T} ( \vert t-s\vert^{\gamma/2}
+\Vert x-y \Vert^{\gamma})& \text{if \; $(m,l) \in {\bf (4)}$}.
\end{cases}
\end{equation*}
\end{prop}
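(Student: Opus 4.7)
\textbf{Proof proposal for Proposition \ref{dgamma}.}

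The plan is to expand $D^k(\gamma_Z)_{m,l}$ via the Leibniz rule for Malliavin derivatives of inner products, and then apply Cauchy--Schwarz to reduce everything to $L^p$-bounds on iterated derivatives of $u(s,y)$ and of the increment $u(t,x)-u(s,y)$, which are provided by \eqref{supderivative} and Proposition \ref{derivat} respectively.

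More precisely, every entry $(\gamma_Z)_{m,l}$ is an inner product of the form $\langle D F^{(m)}, D F^{(l)} \rangle_{\mathcal{H}_T^d}$, where each $F^{(\cdot)}$ is either $u_i(s,y)$ (if the index lies in $\{1,\dots,d\}$) or the increment $u_i(t,x)-u_i(s,y)$ (if the index lies in $\{d+1,\dots,2d\}$). The Malliavin derivative operator $D$ satisfies a Leibniz rule with respect to the $\mathcal{H}_T^d$-inner product: iterating this rule $k$ times yields
\begin{equation*}
D^k \langle DF^{(m)}, DF^{(l)} \rangle_{\mathcal{H}_T^d}
= \sum_{j=0}^{k} \binom{k}{j} \bigl\langle D^{j+1} F^{(m)}, D^{k-j+1} F^{(l)} \bigr\rangle_{\mathcal{H}_T^d},
\end{equation*}
where the inner product pairs the $\mathcal{H}_T^d$-factor coming from the first $D$ applied to each $F$, leaving the remaining $k+1$ tensor copies of $\mathcal{H}_T^d$ free. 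First I would justify this expansion by the standard argument (true for smooth cylindrical variables, then passed to the closure using the norms $\|\cdot\|_{k,p}$).

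Next, applying the triangle inequality in $(\mathcal{H}_T^d)^{\otimes k}$, the norm of each summand is at most $\|D^{j+1} F^{(m)}\|_{(\mathcal{H}_T^d)^{\otimes (j+1)}} \cdot \|D^{k-j+1} F^{(l)}\|_{(\mathcal{H}_T^d)^{\otimes (k-j+1)}}$ by Cauchy--Schwarz in the paired $\mathcal{H}_T^d$ factor. Taking $L^{2p}$-norms of each factor and applying Cauchy--Schwarz in $\omega$, I arrive at
\begin{equation*}
\E\bigl[\|D^k (\gamma_Z)_{m,l}\|_{(\mathcal{H}_T^d)^{\otimes k}}^p\bigr]^{1/p}
\leq C_k \sum_{j=0}^k \|D^{j+1} F^{(m)}\|_{L^{2p}(\Omega;(\mathcal{H}_T^d)^{\otimes (j+1)})} \cdot \|D^{k-j+1} F^{(l)}\|_{L^{2p}(\Omega;(\mathcal{H}_T^d)^{\otimes (k-j+1)})}.
\end{equation*}
At this stage the trichotomy in the statement is transparent: for each factor that equals $u_i(s,y)$, the estimate \eqref{supderivative} provides a uniform bound by a constant, while for each factor that equals $u_i(t,x)-u_i(s,y)$, Proposition~\ref{derivat} provides a bound by $c\,(|t-s|^{\gamma/2}+\|x-y\|^\gamma)^{1/2}$. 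Block \textbf{(1)} has no increment factor, blocks \textbf{(2)} and \textbf{(3)} have one, and block \textbf{(4)} has two, giving the respective exponents $0$, $\tfrac12$, and $1$ of $(|t-s|^{\gamma/2}+\|x-y\|^\gamma)$.

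The only mildly technical point, and the one I would be most careful with, is to verify cleanly the Leibniz expansion above in the tensor-product setting and to check that Proposition~\ref{derivat} is stated for all orders $m\geq 1$ (which it is). Everything else is bookkeeping on tensor indices and H\"older/Cauchy--Schwarz, and since $(s,y),(t,x)\in I\times J$ are bounded away from the boundary, all constants are uniform in the allowed range. This completes the sketch.
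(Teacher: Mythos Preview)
Your proposal is correct and follows essentially the same approach as the paper: the authors omit the proof and simply state that it follows along the same lines as \cite[Propositions 6.5 and 6.7]{Dalang2:05}, using \eqref{supderivative} and Proposition~\ref{derivat}, which is precisely the Leibniz-plus-Cauchy--Schwarz argument you describe. Your identification of the one delicate point (justifying the tensor Leibniz expansion for the $\mathcal{H}_T^d$-inner product) is apt, and otherwise the bookkeeping you outline matches the intended argument.
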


The main technical effort in this section is the proof of the following proposition.

\begin{prop} \label{3p3}
Fix $\eta,T>0$. Assume {\bf P1} and {\bf P2}. Let $I \times J
\subset (0,T] \times \R^k$ be a closed non-trivial rectangle.
There exists $C$ depending on $T$ and $\eta$ such that for any
$(s,y), (t,x) \in I \times J$, $(s,y) \neq (t,x)$, and $p>1$,
\begin{equation}\label{eq5.629}
\E \, \big[\big(\textnormal{det} \, \gamma_Z\big)^{-p}\big]^{1/p}
\leq C (|t-s|^{\frac{2-\beta}{2}}+ \Vert x-y
\Vert^{2-\beta})^{-d(1+\eta)}.
\end{equation}
\end{prop}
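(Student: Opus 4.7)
The target bound $\alpha^{-d(1+\eta)}$ with $\alpha:=|t-s|^{(2-\beta)/2}+\Vert x-y\Vert^{2-\beta}$ reflects the Gaussian heuristic $\det \gamma_Z\sim \textnormal{Var}(u(s,y))^d\cdot\textnormal{Var}(u(t,x)-u(s,y)\mid u(s,y))^d\sim 1\cdot\alpha^d$. The plan is to realize this heuristic via the Schur-complement factorization
$$
  \det \gamma_Z \;=\; \det \gamma_Z^{(1)}\cdot \det\Sigma, \qquad \Sigma := \gamma_Z^{(4)}-\gamma_Z^{(3)}\bigl(\gamma_Z^{(1)}\bigr)^{-1}\gamma_Z^{(2)}.
$$
By Proposition \ref{det}, the negative moments of $\det \gamma_Z^{(1)}$ are uniformly bounded over $(s,y)\in I\times J$, so by H\"older's inequality it suffices to show $\E[(\det \Sigma)^{-p}]^{1/p}\leq C\,\alpha^{-d(1+\eta)}$ for every $p\geq 1$. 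Since $\Sigma$ is positive semi-definite of size $d\times d$, $\det \Sigma\geq \lambda_{\min}(\Sigma)^d$, and the problem reduces to showing
$$
  \E\bigl[\lambda_{\min}(\Sigma)^{-dp}\bigr]^{1/(dp)}\leq C\,\alpha^{-(1+\eta)}.
$$

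Using the variational formula $\lambda_{\min}(\Sigma)=\inf_{\Vert\xi^{(2)}\Vert=1}\inf_{\xi^{(1)}\in\R^d} Q(\xi^{(1)},\xi^{(2)})$ with
$$
  Q(\xi^{(1)},\xi^{(2)})=\sum_{l=1}^d\int_0^T \Bigl\Vert \sum_{i=1}^d\xi^{(1)}_i D_r^{(l)}u_i(s,y)+\sum_{i=1}^d\xi^{(2)}_i D_r^{(l)}\bigl(u_i(t,x)-u_i(s,y)\bigr)\Bigr\Vert_{\cH}^2\,dr,
$$
it suffices to exhibit a lower bound $Q(\xi^{(1)},\xi^{(2)})\geq c\,\alpha$ uniform in $\xi^{(1)}\in\R^d$ and in the unit vector $\xi^{(2)}$, with $L^p$-tails of the random constant $c$ controlled to the appropriate power. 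Assuming $s\leq t$, I would split into two subcases and shrink the $dr$-integration to a window of length $\epsilon$ tuned to the dominant scale.

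\emph{Case 1 (time dominates, $|t-s|^{(2-\beta)/2}\geq \Vert x-y\Vert^{2-\beta}$).} Take $\epsilon=\kappa(t-s)$ with $\kappa$ small enough that $t-\epsilon>s$. On $[t-\epsilon,t]$ one has $D_r^{(l)}u_i(s,y)=0$, so the $\xi^{(1)}$-contribution vanishes identically and
$$
  Q(\xi^{(1)},\xi^{(2)})\geq \sum_{l=1}^d\int_{t-\epsilon}^{t}\Bigl\Vert \sum_{i=1}^d\xi^{(2)}_i D_r^{(l)}u_i(t,x)\Bigr\Vert_{\cH}^2\,dr.
$$
This is exactly the quantity treated in the proof of Proposition \ref{det} (localization of $\sigma$ about $u(t,x)$, strong ellipticity {\bf P2}, and Lemma \ref{scalar}), giving a deterministic lower bound of $c\,\epsilon^{(2-\beta)/2}\Vert\xi^{(2)}\Vert^2$ modulo fluctuation terms analogous to $A_2,A_3$ of \eqref{add}.

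\emph{Case 2 (space dominates).} Take $\epsilon=\kappa\Vert x-y\Vert^2$ and restrict to $[s-\epsilon,s]$; here both derivatives are active. Adding and subtracting $\sigma(u(r,y))$ to localize the diffusion coefficient, and writing $\zeta=\xi^{(1)}-\xi^{(2)}$, the leading contribution to $G_r^{(l)}:=\sum_i\xi^{(1)}_iD_r^{(l)}u_i(s,y)+\sum_i\xi^{(2)}_iD_r^{(l)}(u_i(t,x)-u_i(s,y))$ is
$$
  \sum_i \sigma_{i,l}(u(r,y))\bigl[\zeta_i\,S(s-r,y-\cdot)+\xi^{(2)}_i\,S(t-r,x-\cdot)\bigr].
$$
By hypothesis {\bf P2} and Plancherel (with \eqref{ft}), summing over $l$, integrating in $r$, and rescaling $\tilde\xi=\Vert x-y\Vert\xi$ gives a leading expression of the form
$$
  \rho^2\,\Vert x-y\Vert^{2-\beta}\Bigl[C_0\,\kappa^{(2-\beta)/2}\bigl(\Vert\zeta\Vert^2+\Vert\xi^{(2)}\Vert^2\bigr)+2\,C'(\kappa)\,\langle\zeta,\xi^{(2)}\rangle\Bigr],
$$
where $C_0>0$ is a fixed constant and $C'(\kappa)=O(\kappa)$ as $\kappa\to 0$. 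Since $(2-\beta)/2<1$, choosing $\kappa$ small forces $|C'(\kappa)|<\kappa^{(2-\beta)/2}C_0$; the quadratic form in $\zeta$ is then bounded below by a strictly positive constant times $\Vert x-y\Vert^{2-\beta}$, uniformly in $\xi^{(1)}$ and in unit $\xi^{(2)}$.

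The remainder terms (contributions of $a_i(l,r,t,x)$ from \eqref{eq4.1} and the localization errors) are controlled via Proposition \ref{derivat} exactly as $A_2,A_3$ are controlled in the proof of Proposition \ref{det}. Putting the deterministic lower bound together with the fluctuation estimates, the bound on $\lambda_{\min}(\Sigma)^{-1}$ follows by applying \cite[Proposition 3.5]{Dalang2:05} at the scale $\epsilon\asymp\min((t-s),\Vert x-y\Vert^2)$. The arbitrary $\eta>0$ arises because \eqref{equa3} is available only for $\gamma<2-\beta$ strictly, forcing an arbitrarily small loss in the exponent.

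\emph{Main obstacle.} Case 2 is the delicate one: both the $\xi^{(1)}$ and $\xi^{(2)}$ contributions to $G_r^{(l)}$ are nonzero, and $\xi^{(1)}$ could conspire to cancel the $\xi^{(2)}$ part. What rescues the lower bound is that $S(s-r,y-\cdot)$ and $S(t-r,x-\cdot)$ are \emph{spatially separated}: in Fourier space the cross-correlation carries the oscillatory factor $\cos(2\pi(x-y)\!\cdot\!\xi)$, which after rescaling by $\Vert x-y\Vert$ is of strictly lower order than the diagonal piece. Making this quantitative uniformly in $(\xi^{(1)},\xi^{(2)})$ while simultaneously absorbing the stochastic fluctuations uniformly over the unit sphere in $\xi^{(2)}$ is the principal technical difficulty.
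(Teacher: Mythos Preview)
Your Schur-complement route is genuinely different from the paper's argument and, in one respect, cleaner. The paper writes $\det\gamma_Z=\prod_i(\xi^i)^{\sf T}\gamma_Z\xi^i$ over an eigenbasis, invokes \cite[Lemma~6.8]{Dalang2:05} to find $d$ eigenvectors with large $E_1$-projection, and then splits the product via Cauchy--Schwarz into a factor controlled by Proposition~\ref{large} (the hard part, with three Cases and a factorisation-method estimate) and a factor controlled by Proposition~\ref{smalle}. Your factorisation $\det\gamma_Z=\det\gamma_Z^{(1)}\cdot\det\Sigma$ replaces the role of Proposition~\ref{large} by the much simpler Proposition~\ref{det}, which is a real gain. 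Moreover, your Case~1 is sharper than the paper's: restricting to $[t-\epsilon,t]$ with $\epsilon<t-s$ kills the $\xi^{(1)}$-contribution outright and reproduces exactly the one-point analysis of Proposition~\ref{det}, so no $\epsilon^{\eta}$-splitting of the $\mu$-variable is needed there.

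There is, however, a gap in your Case~2. You write that the remainder terms ``are controlled via Proposition~\ref{derivat} exactly as $A_2,A_3$ are controlled in the proof of Proposition~\ref{det}''. In Proposition~\ref{det} the direction $\xi$ ranges over the unit sphere, so $\sup_{\Vert\xi\Vert=1}$ of the fluctuations has finite moments. In your variational formula, $\xi^{(1)}$ (equivalently $\zeta$) is \emph{unconstrained}, and the localisation errors and $a_i$-terms scale like $\Vert\zeta\Vert^2$. You cannot bound $\inf_\zeta[M(\zeta)-F(\zeta)]$ by $\inf_\zeta M(\zeta)-\sup_\zeta F(\zeta)$, because the latter supremum is $+\infty$. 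One must instead absorb $F$ as a quadratic form into $M$ before optimising, i.e.\ write $Q(\zeta)\geq(A-\tilde Y_\epsilon)\Vert\zeta\Vert^2-2B\Vert\zeta\Vert+(C-\tilde Y_\epsilon)$ and then take the infimum in $\zeta$; this yields a quantity nonlinear in $\tilde Y_\epsilon$, so \cite[Proposition~3.5]{Dalang2:05} does not apply directly and an additional tail argument is needed. (A second, minor issue: on $[s-\epsilon,s]$ alone the $\Vert\xi^{(2)}\Vert^2$-diagonal $\int_{s-\epsilon}^s\Vert S(t-r,x-\cdot)\Vert_\cH^2\,dr$ is only of order $\kappa$, not $\kappa^{(2-\beta)/2}$, near the boundary $t-s=\Vert x-y\Vert^2$; you must also include $[s,t]$ as the paper does in its Sub-Case~A via the term $B_1$.)

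The simplest repair bypasses this entirely: since $\Vert(\xi^{(1)},\xi^{(2)})\Vert\geq\Vert\xi^{(2)}\Vert=1$, one has $\lambda_{\min}(\Sigma)=\inf_{\Vert\xi^{(2)}\Vert=1}\inf_{\xi^{(1)}}Q(\xi^{(1)},\xi^{(2)})\geq\lambda_{\min}(\gamma_Z)$, and Proposition~\ref{smalle} then gives $\E[\lambda_{\min}(\Sigma)^{-dp}]^{1/p}\leq C\alpha^{-d(1+\eta)}$ immediately. Combined with Proposition~\ref{det} and H\"older, your Schur decomposition yields \eqref{eq5.629} without ever invoking Proposition~\ref{large}.
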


\begin{proof}
The proof has the same general structure as that of \cite[Proposition 6.6]{Dalang2:05}. We write
\begin{equation} \label{edf}
\text{det} \, \gamma_Z = \prod_{i=1}^{2d} (\xi^i)^{\sf T} \gamma_Z
\xi^i,
\end{equation}
where $\xi=\{\xi^1,...,\xi^{2d}\}$ is an orthonormal basis of
${\R}^{2d}$ consisting of eigenvectors of $\gamma_Z$.

We now carry out the perturbation argument of \cite[Proposition
6.6]{Dalang2:05}. Let ${\bf 0} \in {\R}^d$ and consider the spaces $E_1=\{ (\lambda, {\bf 0}) : \lambda \in {\R}^d\}$ and $E_2=\{ ({\bf 0}, \mu) : \mu \in {\R}^d\}$. Each
$\xi^i$ can be written
\begin{equation} \label{xi}
\xi^i=(\lambda^i, \mu^i)= \alpha_i ({\tilde{\lambda}}^i,{\bf 0}) +
\sqrt{1-\alpha_i^2} \, ({\bf 0}, {\tilde{\mu}}^i),
\end{equation}
where $\lambda^i, \mu^i \in \mathbb{R}^d$,
$({\tilde{\lambda}}^i,{\bf 0}) \in E_1$, $({\bf 0}, {\tilde{\mu}}^i)
\in E_2$, with $\Vert {\tilde{\lambda}}^i \Vert=\Vert
{\tilde{\mu}}^i \Vert=1$ and $0\leq \alpha_i \leq 1$. In particular,
$\Vert \xi^i \Vert^2=\Vert \lambda^i \Vert^2 + \Vert \mu^i
\Vert^2=1$.

   The result of \cite[Lemma 6.8]{Dalang2:05} give us at least $d$ eigenvectors
$\xi^1,...,\xi^d$ that have a ``large projection on $E_1$", and we will show that these will contribute a factor of order $1$ to the product in
(\ref{edf}). Recall that for a fixed small $\alpha_0 > 0$, $\xi^i$ has a ``large projection on $E_1$" if $\alpha_i \geq \alpha_0$.  The at most $d$ other eigenvectors with a ``small projection on $E_1$" will each contribute
a factor of order $(|t-s|^{\frac{2-\beta}{2}}+ \Vert x-y
\Vert^{2-\beta})^{-1-\eta}$, as we will make precise below.

Hence, by \cite[Lemma 6.8]{Dalang2:05} and Cauchy-Schwarz inequality,
one can write
\begin{equation} \begin{split} \label{ak}
\E \big[\big(\textnormal{det} \, \gamma_Z\big)^{-p}\big]^{1/p} &\leq
\sum_{K \subset \{1,...,2d\},\, \vert K \vert = d} \biggl( \E
\biggl[ \1_{A_K} \biggl( \prod_{i \in K} (\xi^i)^{\sf T}
\gamma_Z \xi^i \biggr)^{-2p} \biggr] \biggr)^{1/(2p)} \\
& \qquad \qquad \times \biggl( \E \left[
\left(\inf_{\substack{\xi =(\lambda, \mu) \in \R^{2d} :\\ \| \lambda
\| ^2 + \| \mu \| ^2=1}} \,\xi^{\sf T} \gamma_Z \xi \right)^{-2dp}
\right] \biggr)^{1/(2p)},
\end{split}\end{equation}
where $A_K= \cap_{i \in K} \{\alpha_i \geq \alpha_0\}$.

With this, Propositions \ref{smalle} and \ref{large} below will
conclude the proof of Proposition \ref{3p3}.
\end{proof}

\begin{prop} \label{smalle}
Fix $\eta,T>0$. Assume {\bf P1} and {\bf P2}. There exists $C$
depending on $\eta$ and $T$ such that for all $s,t \in I$, $0 \leq
t-s <1$, $x,y \in J$, $(s,y) \neq (t,x)$, and $p>1$,
\begin{equation}\label{eq5.1029}
\E\left[ \left( \inf_{\substack{\xi =(\lambda, \mu) \in \R^{2d}
:\\ \| \lambda \| ^2 + \| \mu \| ^2=1}} \,\xi^{\sf T} \gamma_Z \xi
\right)^{-2dp} \right] \leq C (|t-s|^{\frac{2-\beta}{2}}+ \Vert x-y
\Vert^{2-\beta})^{-2dp(1+\eta)}.
\end{equation}
\end{prop}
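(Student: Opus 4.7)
The plan is to adapt the perturbation-and-localization argument used for Proposition \ref{det} to the bivariate vector $Z$. Fix a unit vector $\xi=(\lambda,\mu)\in\R^{2d}$ with $\|\lambda\|^2+\|\mu\|^2=1$ and set $\tilde\lambda:=\lambda-\mu$, so that
\begin{equation*}
\xi^{\sf T}\gamma_Z\xi=\sum_{l=1}^d\int_0^T dr\,\Big\|\sum_{i=1}^d\tilde\lambda_i D^{(l)}_r u_i(s,y)+\sum_{i=1}^d\mu_i D^{(l)}_r u_i(t,x)\Big\|_{\cH}^2.
\end{equation*}
Write $\Delta^2:=|t-s|^{(2-\beta)/2}+\|x-y\|^{2-\beta}$ for the parabolic distance squared from $(s,y)$ to $(t,x)$ and fix a localization scale $\epsilon:=c_0\Delta^{4(1+\eta)/(2-\beta)}$, so $\epsilon^{(2-\beta)/2}=c\,\Delta^{2(1+\eta)}$. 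Since $\|\lambda\|^2+\|\mu\|^2=1$, at least one of $\|\mu\|\ge 1/2$ or $\|\tilde\lambda\|\ge(\sqrt 3-1)/2$ holds, which splits the analysis into two cases.

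In the first case I localize the $r$-integral to $(t-\epsilon',t)$ with $\epsilon':=\epsilon\wedge(t-s)$; when $\epsilon\le t-s$ the derivative $Du(s,y)$ vanishes on this interval and the argument reduces verbatim to the one in Proposition \ref{det} applied at $(t,x)$: after adding and subtracting $\sigma(u(r,x))$ and invoking hypothesis \textbf{P2} together with Lemma \ref{scalar}, the leading term is bounded below by $c\rho^2\|\mu\|^2\epsilon^{(2-\beta)/2}$, while the Lipschitz and stochastic-integral remainders have $L^q$-norms of order $\epsilon^{(2-\beta)/2+\gamma/2}$ and $\epsilon^{2-\beta}$ respectively, by the same computations that proved Proposition \ref{derivat}.

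In the second case I localize to $r\in(s-\epsilon,s)$, where both derivatives contribute. After adding and subtracting $\sigma(u(s,y))$ in the term carrying $\tilde\lambda$ and $\sigma(u(s,x))$ in the term carrying $\mu$, the leading part is
\begin{equation*}
\int_{s-\epsilon}^s dr\,\sum_{l=1}^d\Big\|a_l\,S(s-r,y-\cdot)+b_l\,S(t-r,x-\cdot)\Big\|_{\cH}^2,\qquad a_l:=[\tilde\lambda^{\sf T}\sigma(u(s,y))]_l,\quad b_l:=[\mu^{\sf T}\sigma(u(s,x))]_l,
\end{equation*}
and hypothesis \textbf{P2} yields $\|a\|^2\ge\rho^2\|\tilde\lambda\|^2\ge c>0$. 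By the Fourier computation \eqref{eqau} the diagonal $a$-contribution equals $c\|a\|^2\epsilon^{(2-\beta)/2}$, so it suffices to show the cross term is of smaller order. Using $|\cos|\le 1$,
\begin{equation*}
\Big|\int_{s-\epsilon}^s dr\,\langle S(s-r,y-\cdot),S(t-r,x-\cdot)\rangle_{\cH}\Big|\le\int_{s-\epsilon}^s dr\int_{\R^k}d\zeta\,\|\zeta\|^{\beta-k}e^{-2\pi^2(t+s-2r)\|\zeta\|^2}=c\int_0^\epsilon(t-s+2u)^{-\beta/2}du.
\end{equation*}
When $|t-s|^{(2-\beta)/2}$ dominates in $\Delta^2$ one has $\epsilon\ll t-s$, so this is bounded by $c\epsilon(t-s)^{-\beta/2}=\epsilon^{(2-\beta)/2}\cdot(\epsilon/(t-s))^{\beta/2}=o(\epsilon^{(2-\beta)/2})$; when instead $\|x-y\|^{2-\beta}$ dominates (so $\epsilon\ll\|x-y\|^2$), the cosine factor dropped above must be kept, and after the scaling $\zeta=(s-r)^{-1/2}\tilde\zeta$ it becomes an oscillatory factor of frequency at least $\|x-y\|/\epsilon^{1/2}\to\infty$, which again produces an $o(\epsilon^{(2-\beta)/2})$ bound by standard Fourier decay. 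The cross term is then absorbed into the diagonal $\|a\|^2$-term via $|2a_lb_l|\le\tfrac12 a_l^2+2b_l^2$ (noting that the $b$-diagonal contribution is of the same small order as the cross term), leaving a leading bound $\ge c'\rho^2\epsilon^{(2-\beta)/2}$.

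Combining the two cases gives, uniformly in unit $\xi$,
\begin{equation*}
\xi^{\sf T}\gamma_Z\xi\ge c\,\epsilon^{(2-\beta)/2}-Y_\epsilon,\qquad\|Y_\epsilon\|_{L^q(\Omega)}\le C\epsilon^{(2-\beta)/2+\gamma/2},
\end{equation*}
for some $\gamma\in(0,2-\beta)$ and all $\epsilon\in(0,\epsilon_0]$ with $\epsilon_0=c_0\Delta^{4(1+\eta)/(2-\beta)}$. Taking the infimum over $\xi$ and invoking \cite[Proposition 3.5]{Dalang2:05} with $\alpha_1=\alpha_2=(2-\beta)/2$, $\beta_1=\beta_2=(2-\beta+\gamma)/2$ and this $\epsilon_0$ yields
\begin{equation*}
\E\Big[\big(\inf_{\|\xi\|=1}\xi^{\sf T}\gamma_Z\xi\big)^{-2dp}\Big]^{1/(2p)}\le C\epsilon_0^{-d(2-\beta)/2}=C\Delta^{-2d(1+\eta)},
\end{equation*}
which is \eqref{eq5.1029}. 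The main technical obstacle is the cross-term bound in Case 2, and it is this step that forces the exponent $1+\eta$ rather than $1$: the strict positivity of $\eta$ provides the margin needed to push $\epsilon$ strictly below both $t-s$ and $\|x-y\|^2$ in every regime, so that either the heat-kernel scaling or the Fourier oscillation can render the cross term negligible.
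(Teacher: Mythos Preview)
There is a genuine gap in your Case 2 analysis. You claim that the ``$b$-diagonal'' contribution
\[
\|b\|^2\int_{s-\epsilon}^s\|S(t-r,x-\cdot)\|_{\cH}^2\,dr
\]
is ``of the same small order as the cross term'' and can therefore be discarded. But by Lemma \ref{scalar} this integral equals $C\big[(t-s+\epsilon)^{(2-\beta)/2}-(t-s)^{(2-\beta)/2}\big]$, which is of order $\epsilon^{(2-\beta)/2}$ whenever $t-s\le\epsilon$---the \emph{same} order as the $a$-diagonal, not smaller. So your absorption step fails in that regime.

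This matters because your case split does not cover all unit vectors. Take $\lambda=\mu$ (so $\tilde\lambda=0$) and $t=s$, $x\neq y$. Then $\|a\|=0$, so your Case~2 lower bound is zero; and your Case~1 localization interval $(t-\epsilon',t)$ with $\epsilon'=\epsilon\wedge(t-s)$ is empty. You are left with no lower bound at all for this direction. The paper's proof avoids this by \emph{keeping} the $b$-diagonal as a positive contribution and using the elementary fact that $\|\lambda-\mu\|^2+\|\mu\|^2\ge c>0$ on the unit sphere (see the treatment of $\tilde A_1+\tilde A_2+B_1$ in Sub-Case~A of Case~2). The cross term is then shown to be $\le\Phi(\alpha)\epsilon^{(2-\beta)/2}$ with $\Phi(\alpha)\to0$ as $\alpha\sim\|x-y\|/\sqrt{\epsilon}\to\infty$, via a concrete near/far split of the convolution $S(t+s-2r,y-x+\cdot)$, not by an appeal to ``standard Fourier decay''. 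Your oscillatory-integral sketch for the cross term would need to be made precise along those lines, but the more fundamental issue is that you have thrown away the very term that provides the lower bound when $\tilde\lambda$ is small.
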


\begin{prop} \label{large}
Assume ${\bf P1}$ and ${\bf P2}$. Fix $T>0$ and $p>1$. Then there
exists $C=C(p,T)$ such that for all $s,t \in I$ with $0 \leq t-s <
\frac{1}{2}$, $x,y \in J$, $(s,y) \neq (t,x)$,
\begin{equation} \label{A}
\E \left[ \1_{A_K} \left( \prod_{i \in K} (\xi^i)^{\sf T} \gamma_Z
\xi^i \right)^{-p} \right] \leq C,
\end{equation}
where $A_K$ is defined just below \textnormal{(\ref{ak})}.
\end{prop}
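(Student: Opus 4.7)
The plan is to obtain, for each $i\in K$, a pointwise lower bound on $(\xi^i)^{\sf T}\gamma_Z\xi^i$ with uniformly bounded negative moments, from which the claim follows by H\"older's inequality applied to the $d$-fold product over $K$. Using the decomposition \eqref{xi}, write
\[
\xi^i\cdot Z \;=\; \alpha_i\,\tilde\lambda^i\cdot u(s,y) \;+\; \sqrt{1-\alpha_i^2}\,\tilde\mu^i\cdot\big(u(t,x)-u(s,y)\big),
\]
so that $(\xi^i)^{\sf T}\gamma_Z\xi^i = \|D(\xi^i\cdot Z)\|_{\cH_T^d}^2$. I will mimic the localization argument in the proof of Proposition~\ref{det}: restrict the $dr$-integration to the short interval $[s-\epsilon,s]$ for a small $\epsilon>0$ to be chosen, use \eqref{derivative} to decompose $D_r(\xi^i\cdot Z)$ into a leading heat-kernel contribution plus a stochastic-integral and drift remainder, and apply the inequality $\|a+b\|_\cH^2\ge \tfrac{2}{3}\|a\|_\cH^2 - 2\|b\|_\cH^2$ repeatedly to peel off the remainder and to isolate the $\alpha_i$ part from the $\sqrt{1-\alpha_i^2}$ part.

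After localizing $\sigma(u(r,\cdot))$ at $\sigma(u(r,y))$ and invoking the ellipticity hypothesis {\bf P2}, one expects to arrive at an estimate of the form
\[
(\xi^i)^{\sf T}\gamma_Z\xi^i \;\ge\; c\,\alpha_0^2\,\rho^2\,\epsilon^{(2-\beta)/2} \;-\; C\,(B_1+B_2+B_3),
\]
where $B_2$ (the $\sigma$-localization error) and $B_3$ (the stochastic remainder from \eqref{eq4.1}) admit moment bounds $\E[B_2^q]\le C\epsilon^{q[(2-\beta)/2+\gamma/2]}$ and $\E[B_3^q]\le C\epsilon^{q(2-\beta)}$ exactly as in Proposition~\ref{det} (the latter via Lemma~\ref{lem:7}), and the new term is
\[
B_1 \;\le\; C\,(1-\alpha_i^2)\int_{s-\epsilon}^{s}\|S(t-r,x-\cdot)-S(s-r,y-\cdot)\|_\cH^2\,dr,
\]
capturing the contribution from the increment direction. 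Once the three error terms are all of $L^q$-order strictly higher in $\epsilon$ than the leading $\epsilon^{(2-\beta)/2}$, the Kohatsu-Higa-type negative-moment criterion of \cite[Proposition~3.5]{Dalang2:05} will close the argument.

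The hard part is handling $B_1$. A Fourier computation combined with the upper bound invoked in the proof of Theorem~\ref{t1'} gives only $B_1\le C\,\delta^*$ deterministically, with $\delta^*:=|t-s|^{(2-\beta)/2}+\|x-y\|^{2-\beta}$, and this is \emph{not} automatically small in $\epsilon$ since $\delta^*$ is not tied to $\epsilon$. Following the technique of \cite{Dalang:11}, the resolution is a case analysis depending on how $\delta^*$ compares to $\epsilon^{(2-\beta)/2}$: in the regime where $\delta^*$ is small, one chooses $\epsilon$ slightly larger than $\delta^{*\,2/(2-\beta)}$ so that $\epsilon^{(2-\beta)/2}\gg \delta^*$ and $B_1$ becomes of the required higher $\epsilon$-order; in the complementary regime where $\delta^*$ is bounded below, the Malliavin matrix of the increment $u(t,x)-u(s,y)$ is itself non-degenerate, and one can localize instead on a sub-interval of $(s,t]$ on which $D_r u(s,y)\equiv 0$, supplying the missing lower bound via Proposition~\ref{det} applied to $\gamma_{u(t,x)}$. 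Careful patching of these two regimes completes the proof.
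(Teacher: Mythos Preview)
Your outline captures the overall localization strategy, but the resolution of the ``hard part'' has genuine gaps.

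First, the bound $B_1\le C(1-\alpha_i^2)\,\delta^*$ is not of higher $\epsilon$-order than the main term $c\,\epsilon^{(2-\beta)/2}$: the factor $\delta^*$ does not depend on $\epsilon$, and the alternative bound $B_1\le C\epsilon^{(2-\beta)/2}$ (from Lemma~\ref{scalar}) is of \emph{exactly} the same order. The criterion of \cite[Proposition~3.5]{Dalang2:05} therefore cannot be invoked as stated; you need the error's $L^q$-norm to be $O(\epsilon^{\beta q})$ with $\beta>\tfrac{2-\beta}{2}$ for all $\epsilon\in(0,\epsilon_0)$ with a \emph{fixed} $\epsilon_0$. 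Letting $\epsilon_0$ depend on $\delta^*$, as your phrase ``choose $\epsilon$ slightly larger than $\delta^{*2/(2-\beta)}$'' suggests, would make the resulting negative-moment bound blow up as $\delta^*\to 0$, destroying the uniformity required in \eqref{A}.

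Second, your two-regime split on $\delta^*$ omits the case that drives the paper's argument: $t-s$ small but $\|x-y\|$ large. Here $\delta^*$ is bounded below, yet your fallback to a sub-interval of $(s,t]$ fails because that interval may have arbitrarily small length, and in any case the contribution from $(s,t]$ carries the factor $(1-\alpha_i^2)$, which is not bounded below on $A_K$. The paper handles this (its Case~3) by keeping \emph{both} heat-kernel contributions $S(s-r,y-\cdot)$ and $S(t-r,x-\cdot)$ inside the main term and showing that the cross term $\langle S(s-r,y-\cdot),S(t-r,x-\cdot)\rangle_\cH$ is small when $\|x-y\|^2\gtrsim\epsilon$ (the $\tilde B_4$ estimate). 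When instead both $t-s\le\epsilon$ and $\|x-y\|^2\lesssim\epsilon$ (the paper's Case~2), the paper replaces $\epsilon$ by $\epsilon^\theta$ with $\theta<\tfrac12$ and uses the factorization method to obtain $\E[|\bar G_{3,3,\epsilon^\theta}|^q]\le C\epsilon^{\theta\frac{2-\beta}{2}q+\frac{\gamma}{2}(1-2\theta)q}$, which \emph{is} of higher order than $\epsilon^{\theta(2-\beta)/2}$. Finally, when $t-s>\epsilon$ (Case~1), the paper makes the further random split on whether $1-\alpha_i^2\le\epsilon^\eta$: if so, the $(1-\alpha_i^2)$ factor in $B_1$ supplies the missing $\epsilon^\eta$; if not, one localizes on $[t-\epsilon,t]\subset(s,t]$ and the factor $1-\alpha_i^2\ge\epsilon^\eta$ now \emph{helps} the lower bound. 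None of these three mechanisms appears in your sketch.
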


\begin{proof}[Proof of Proposition \ref{smalle}]
Fix $\gamma \in (0, 2-\beta)$. It suffices to prove this for $\eta$
sufficiently small, in particular, we take $\eta < \gamma /2$. The
proof of this lemma follows lines similar to those of \cite[Proposition 6.9]{Dalang2:05}, with significantly different estimates needed to handle the spatially homogeneous noise.

For $\epsilon \in (0, t-s)$,
\begin{equation*}
\xi^{\sf T} \gamma_Z \xi \geq J_1+J_2,
\end{equation*}
where
\begin{equation} \begin{split} \label{J1}
J_1 &:= \int_{s-\e}^s dr \sum_{l=1}^d \bigg\Vert \sum_{i=1}^d (\lambda_i-\mu_i)
\left(S(s-r,y-\cdot) \sigma_{i,l}(u(r,\cdot))+a_i(l,r,s,y)\right) + W \bigg\Vert_{\mathcal{H}}^2,\\
J_2 &:= \int_{t-\e}^t dr \sum_{l=1}^d \,
\Vert W \Vert_{\mathcal{H}}^2,
\end{split}
\end{equation}
where
\begin{equation} \label{w}
W:=\sum_{i=1}^d \mu_i S(t-r,x-\cdot) \sigma_{i,l}(u(r,\cdot))+\mu_i
a_i(l,r,t,x),
\end{equation}
and $a_i(l,r,t,x)$ is defined in \eqref{eq4.1}. \vskip 12pt We now
consider two different cases. \vskip 12pt
\noindent\emph{Case 1.} Assume $t-s>0$ and
$\Vert x-y \Vert^2\le t-s$. Fix $\e\in\left(0,(t-s) \wedge (\frac14)^{2/\eta}\right)$.
We write
\begin{equation} \label{minimum}
\inf_{\|\xi\|=1} \xi^{\sf T}\gamma_Z\xi \ge
\min\left( \inf_{ \|\xi\|=1\,,\|\mu\|\ge
\e^{\eta/2}} J_2\,, \inf_{\|\xi\|=1\,,\|\mu\|\le\e^{\eta/2}}
J_1\right).
\end{equation}
We will now bound the two terms in the above minimum. We start by
bounding the term containing $J_2$. Using (\ref{sumaqua}) and adding
and subtracting a ``local" term as in (\ref{add}), we find that $J_2
\ge \frac23 J_2^{(1)} -4(J_2^{(2)}+ J_2^{(3)})$, where
\begin{align*} \nonumber
J_2^{(1)}&=\sum_{l=1}^d \int_{t-\e}^t dr \, \int_{\R^k} dv \,
\int_{\R^k} dz \, \Vert v-z \Vert^{-\beta} S(t-r,x-v) S(t-r,x-z)
(\mu^{\sf T} \cdot \sigma(u(r,x)) )_l^2, \\ \nonumber
J_2^{(2)}&=\sum_{l=1}^d \int_{t-\e}^t dr \, \int_{\R^k} dv \,
\int_{\R^k} dz \, \Vert v-z \Vert^{-\beta} S(t-r,x-v) S(t-r,x-z) \\
\nonumber &\qquad \qquad \qquad\times \left(\mu^{\sf T} \cdot
[\sigma(u(r,v)) - \sigma(u(r,x))]\right)_l \left(\mu^{\sf T} \cdot [\sigma(u(r,z))-
\cdot \sigma(u(r,x))]\right)_l,
\\ \nonumber J_2^{(3)}&= \int_{t-\e}^t dr \, \sum_{l=1}^d
\bigg\Vert \sum_{i=1}^d a_i(l,r,t,x) \,\mu_i
\bigg\Vert_{\mathcal{H}}^2,
\end{align*}
Now, hypothesis {\bf P2} and Lemma \ref{scalar} together imply that
$J_2^{(1)} \geq c \, \|\mu\|^2\e^{\frac{2-\beta}{2}}$. Therefore,
\begin{equation} \label{min1}
\inf_{\Vert \xi \Vert=1, \|\mu\|\ge\e^{\eta/2}} J_2
\ge c\e^{\frac{2-\beta}{2}+\eta}-\sup_{\Vert \xi \Vert=1, \|\mu\|\ge\e^{\eta/2}} 2 (\vert J_2^{(2)}\vert + J_2^{(3)}).
\end{equation}
Moreover, (\ref{equa45}) and Lemma \ref{lem:7} imply that for any $q\geq 1$,
\begin{equation} \label{min2}
\E \, \biggl[ \sup_{\Vert \xi \Vert=1, \|\mu\|\ge\e^{\eta/2}} (\vert
J_2^{(2)} \vert +J_2^{(3)})^q \biggr]
\leq c \e^{\frac{2-\beta}{2}q+\frac{\gamma}{2}q}.
\end{equation}
This bounds the first term in (\ref{minimum}) and gives an analogue
of the first inequality in \cite[(6.12)]{Dalang2:05}.

In order to bound the second infimum in (\ref{minimum}), we use
again (\ref{sumaqua}) and we add and subtract a ``local" term as in
(\ref{add}) to see that
\begin{equation*}
J_1 \ge \frac23\, J_1^{(1)} -8(J_1^{(2)} + J_1^{(3)}+J_1^{(4)}+J_1^{(5)}),
\end{equation*}
where
\begin{equation*} \begin{split}
J_1^{(1)}&=\sum_{l=1}^d \int_{s-\e}^s dr \, ((\lambda-\mu)^{\sf T}
\cdot \sigma(u(r,y)))_l^2 \int_{\R^k} d\xi \,
\, \Vert \xi \Vert^{\beta-k} \vert \mathcal{F} S(s-r,y-\cdot) (\xi) \vert^2, \\
J_1^{(2)}&=\sum_{l=1}^d \int_{s-\e}^s dr \, \int_{\R^k} dv \,
\int_{\R^k} dz \, \Vert v-z \Vert^{-\beta} S(s-r,y-v) S(s-r,y-z) \\
& \qquad\times  \left((\lambda-\mu)^{\sf T}
\cdot [\sigma(u(r,v)) - \sigma(u(r,y))]\right)_l \left((\lambda-\mu)^{\sf T} \cdot
[\sigma(u(r,z)) - \sigma(u(r,y))]\right)_l, \\
J_1^{(3)} &:= \int_{s-\e}^s
dr \sum_{l=1}^d \bigg\Vert \sum_{i=1}^d
\mu_i S(t-r,x-\cdot) \sigma_{i,l}(u(r,\cdot)) \bigg\Vert_{\mathcal{H}}^2,\\
J_1^{(4)}&:= \int_{s-\e}^s
dr \sum_{l=1}^d \bigg\Vert \sum_{i=1}^d
(\lambda_i-\mu_i) a_i(l,r,s,y) \bigg\Vert_{\mathcal{H}}^2,\\
J_1^{(5)} &:= \int_{s-\e}^s
dr \sum_{l=1}^d \bigg\Vert \sum_{i=1}^d
\mu_i a_i(l,r,t,x) \bigg\Vert_{\mathcal{H}}^2.
\end{split}
\end{equation*}
Hypothesis {\bf P2} and Lemma \ref{scalar} together imply that $J_1^{(1)} \geq c \, \| \lambda-\mu\|^2\e^{\frac{2-\beta}{2}}$.
Therefore,
\begin{equation} \label{min3}
\inf_{\Vert \xi \Vert=1, \|\mu\|\le\e^{\eta/2}} J_1
\ge \tilde{c} \e^{\frac{2-\beta}{2}}-\sup_{\Vert \xi \Vert=1, \|\mu\|\le\e^{\eta/2}} 8\left(\vert J_1^{(2)}\vert +J_1^{(3)}+J_1^{(4)}+J_1^{(5)}\right).
\end{equation}
Now, (\ref{equa45}) implies that for any $q\geq 1$,
\begin{equation*}
\E \, \biggl[ \sup_{\Vert \xi \Vert=1, \|\mu\|\le\e^{\eta/2}} \vert J_1^{(2)} \vert^q \biggr]
\leq c \e^{\frac{2-\beta}{2}q+\frac{\gamma}{2}q}.
\end{equation*}
Moreover, hypothesis {\bf P1} (in particular, the fact that $\sigma$ is bounded), the Cauchy-Schwarz inequality and
Lemma \ref{scalar} imply that for any $q\geq 1$,
\begin{equation*}
\E \, \biggl[ \sup_{\Vert \xi \Vert=1, \|\mu\|\le\e^{\eta/2}} \vert J_1^{(3)} \vert^q \biggr]
\leq c \e^{\frac{2-\beta}{2}q+\eta q}.
\end{equation*}
Applying Lemma \ref{lem:7} with $t=s$, we get that for any $q\geq 1$,
\begin{equation*}
\E \, \biggl[ \sup_{\Vert \xi \Vert=1, \|\mu\|\le\e^{\eta/2}} \vert J_1^{(4)} \vert^q \biggr]
\leq c \e^{\frac{2-\beta}{2}q+\frac{2-\beta}{2}q}.
\end{equation*}
Again Lemma \ref{lem:7} gives, for any $q\geq 1$,
\begin{equation*}
\E \, \biggl[ \sup_{\Vert \xi \Vert=1, \|\mu\|\le\e^{\eta/2}} \vert J_1^{(5)} \vert^q \biggr]
\leq c \e^{\frac{2-\beta}{2}q+\eta q}.
\end{equation*}
Since we have assumed that $\eta < \frac{\gamma}{4}$, the above
bounds in conjunction prove that for any $q\geq 1$,
\begin{equation} \label{min4}
\E \, \left[ \sup_{\Vert \xi \Vert=1, \|\mu\|\le\e^{\eta/2}} \left(\vert
J_1^{(2)} \vert+J_1^{(3)}+J_1^{(4)}+J_1^{(5)} \right)^q \right]
\leq c \e^{\frac{2-\beta}{2}q+\eta q}.
\end{equation}

We finally use 
(\ref{minimum})--(\ref{min4}) together with \cite[Proposition 3.5]{Dalang2:05} with $\alpha_1=\frac{2-\beta}{2}+\eta$, $\beta_1=\frac{2-\beta}{2}+\frac{\gamma}{4}$, $\alpha_2=\frac{2-\beta}{2}$ and $\beta_2= \frac{2-\beta}{2}+\eta$ to conclude that
\begin{equation*} \begin{split}
\E \left[ \left(\inf_{\|\xi\|=1} \xi^{\sf T}\gamma_Z\xi \right)^{-2pd}\right]
& \le c
\left[ (t-s) \wedge \left(\frac14 \right)^{2/\eta} \right]^{-2pd(\frac{2-\beta}{2}+\eta)}\leq c' (t-s)^{-2pd(\frac{2-\beta}{2}+\eta)}\\
&\le \tilde{c} \left[ (t-s)^{\frac{2-\beta}{2}} + \Vert x-y \Vert^{2-\beta}\right]^{-2pd
(1+\eta^{\prime})},
\end{split}
\end{equation*}
(for the second inequality, we have used the fact that $t-s < 1$, and for the third, that $\Vert x-y\Vert^2 \leq t-s$), whence follows the proposition in the case that $\Vert x-y \Vert^2\le t-s$.

\vskip 12pt

\noindent\emph{Case 2.} Assume that $\Vert x-y \Vert>0$ and $\Vert x-y \Vert^2\ge t-s\ge 0$. Then
\begin{equation*}
\xi^{\sf T} \gamma_Z \xi \ge J_1 + \tilde{J}_2,
\end{equation*}
where $J_1$ is defined in (\ref{J1}),
\begin{equation*}
\tilde{J}_2:=\int_{(t-\e) \vee s}^t dr \sum_{l=1}^d \,\Vert W
\Vert_{\mathcal{H}}^2,
\end{equation*}
and $W$ is defined in (\ref{w}).
Let $\e>0$ be such that
$(1+\alpha)\e^{1/2}<\frac{1}{2} \Vert x-y \Vert$, where
$\alpha>0$ is large but fixed; its specific value will be decided
on later. From here on, Case 2 is divided into two further sub-cases.

\vskip 12pt

\noindent\emph{Sub-Case A.} Suppose that $\e\ge t-s$. Apply
inequality \eqref{sumaqua} and add and subtract a ``local" term as in
(\ref{add}), to find that
\begin{equation*}\begin{split}
J_1 & \geq \frac23\, A_1- 8(A_2+A_3 + A_4+A_5), \\
\tilde J_2 & \geq \frac23\, B_1-4(B_2 + B_3),
\end{split} \end{equation*}
where
\begin{align*}
A_1 &:= \sum_{l=1}^d \int_{s-\e}^s
dr \, \Big\Vert
S(s-r,y-\cdot) \left((\lambda-\mu)^{\sf T}
\cdot \sigma(u(r,y))\right)_l\\
& \qquad \qquad \qquad \qquad + S(t-r,x-\cdot) \left(\mu^{\sf T}
\cdot \sigma(u(r,x))\right)_l \Big\Vert_{\mathcal{H}}^2,\\
A_2 &:= \sum_{l=1}^d \int_{s-\e}^s dr\, \left\Vert S(s-r,y-\cdot)
  \left((\lambda-\mu)^{\sf T} \cdot [\sigma(u(r,\cdot))- \sigma(u(r,y))]\right)_l\right\Vert^2_{\mathcal{H}}\\
A_3 &:= \sum_{l=1}^d \int_{s-\e}^s dr \,  \left\Vert S(t-r,x-\cdot) \left(\mu^{\sf T} \cdot [\sigma(u(r,\cdot))-\sigma(u(r,x))]\right)_l \right\Vert^2_{\mathcal{H}}\\
A_4 &:=\sum_{l=1}^d \int_{s-\e}^s
dr \, \bigg\Vert \sum_{i=1}^d
(\lambda_i-\mu_i) a_i(l,r,s,y) \bigg\Vert_{\mathcal{H}}^2,\\
A_5 &:= \sum_{l=1}^d \int_{s-\e}^s
dr \, \bigg\Vert \sum_{i=1}^d
\mu_i a_i(l,r,t,x) \bigg\Vert_{\mathcal{H}}^2, \\
B_1 &:= \sum_{l=1}^d\int_{s}^t
dr \, \bigg\Vert S(t-r,x-\cdot) (\mu^{\sf T}
\cdot \sigma(u(r,x)))_l\bigg\Vert_{\mathcal{H}}^2, \\
B_2 &:= \sum_{l=1}^d \int_{s}^t dr \, \left\Vert S(t-r,x-\cdot) \left(\mu^{\sf T} \cdot [ \sigma(u(r,\cdot))- \sigma(u(r,x))] \right)_l \right\Vert_{\mathcal{H}}^2, \\
B_3 &:= \sum_{l=1}^d\int_{s}^t
dr \, \bigg\Vert \sum_{i=1}^d
\mu_i a_i(l,r,t,x) \bigg\Vert_{\mathcal{H}}^2.
\end{align*}

Using the inequality $(a+b)^2\geq a^2+b^2-2\vert ab\vert$, we see
that $A_1 \geq \tilde{A_1}+\tilde{A_2}-2\tilde{B_4}$, where
\begin{equation*} \begin{split}
\tilde{A_1}&=\sum_{l=1}^d \int_{s-\e}^s
dr \, \bigg\Vert
S(s-r,y-\cdot) ((\lambda-\mu)^{\sf T}
\cdot \sigma(u(r,y)))_l\bigg\Vert_{\mathcal{H}}^2, \\
\tilde{A_2}&=\sum_{l=1}^d \int_{s-\e}^s
dr \, \bigg\Vert
S(t-r,x-\cdot) (\mu^{\sf T}
\cdot \sigma(u(r,x)))_l\bigg\Vert_{\mathcal{H}}^2, \\
\tilde{B_4}&=\sum_{l=1}^d \int_{s-\e}^s
dr \, \langle S(s-r,y-\cdot) ((\lambda-\mu)^{\sf T}
\cdot \sigma(u(r,y)))_l, S(t-r,x-\cdot) (\mu^{\sf T} \cdot
\sigma(u(r,x)))_l\rangle_{\mathcal{H}}.
\end{split}
\end{equation*}
By hypothesis {\bf P2} and Lemma \ref{scalar}, we see that
\begin{equation*}
\begin{split}
\tilde{A_2}+B_1&=\sum_{l=1}^d \int_{s-\e}^t
dr \, \bigg\Vert
S(t-r,x-\cdot) (\mu^{\sf T}
\cdot \sigma(u(r,x)))_l\bigg\Vert_{\mathcal{H}}^2 \\
&\geq \sum_{l=1}^d \int_{t-\e}^t
dr \, \bigg\Vert
S(t-r,x-\cdot) (\mu^{\sf T}
\cdot \sigma(u(r,x)))_l\bigg\Vert_{\mathcal{H}}^2 \\
&\geq \Vert \mu \Vert^2 \epsilon^{\frac{2-\beta}{2}}.
\end{split}
\end{equation*}
Similarly, $\tilde{A_1}\geq \Vert\lambda- \mu \Vert^2
\epsilon^{\frac{2-\beta}{2}}$, and so
\begin{equation} \label{case21}
\tilde{A_1}+\tilde{A_2}+B_1 \geq (\Vert\lambda- \mu \Vert^2+\Vert
\mu \Vert^2) \epsilon^{\frac{2-\beta}{2}}\geq c
\epsilon^{\frac{2-\beta}{2}}.
\end{equation}

Turning to the terms that are to be bounded above, we see as in \eqref{equa45} that
$$\E [ \vert A_2
\vert^q ]\leq c \epsilon^{\frac{2-\beta}{2}q+\frac{\gamma}{2}q}, \, \text{ and } \, \E [ \vert B_2 \vert^q] \leq c
\epsilon^{\frac{2-\beta}{2}q+\frac{\gamma}{2}q}.$$
Using Lemma
\ref{lem:7} and the fact that $t-s \leq \epsilon$, we see that
$$
\E[\vert B_3 \vert^q] \leq c \epsilon^{(2-\beta)q}, \quad \E[\vert A_4
\vert^q] \leq c \epsilon^{(2-\beta)q}, \; \text{ and } \, \E[\vert A_5 \vert^q]
\leq c \epsilon^{(2-\beta)q}.
$$

In order to bound the $q$-th moment of $A_3$, we proceed as we did
for the random variable $A_2$ in (\ref{add}). It suffices to bound the
$q$-th moment of
\begin{equation*} \begin{split}
&\sum_{l=1}^d \int_{s-\e}^s dr \int_{\mathbb{R}^k} dv
\int_{\mathbb{R}^k} dz \Vert v-z \Vert^{-\beta} S(t-r,x-v) S(t-r,
x-z)\\
&\qquad \qquad \times \left(\mu^{\sf T} \cdot [\sigma(u(r,v))-\sigma(u(r,x))]\right)_l
  \left(\mu^{\sf T} \cdot [\sigma(u(r,z))-\sigma(u(r,x))]\right)_l.
\end{split}
\end{equation*}
Using H\"older's inequality, the Lipschitz property of $\sigma$ and
(\ref{equa3}), this $q$-th moment is bounded by
\begin{equation*} \begin{split}
&\biggl(\int_{s-\e}^s dr \int_{\mathbb{R}^k} dv \int_{\mathbb{R}^k}
dz \Vert v-z \Vert^{-\beta} S(t-r,x-v) S(t-r,
x-z)\biggr)^{q-1}\\
& \qquad \times \int_{s-\e}^s dr \int_{\mathbb{R}^k} dv
\int_{\mathbb{R}^k} dz \Vert v-z \Vert^{-\beta} S(t-r,x-v) S(t-r,
x-z) \Vert v-x \Vert^{\frac{\gamma q}{2}}\, \Vert z-x \Vert^{\frac{\gamma q}{2}} \\
&\quad=:a_1\times a_2.
\end{split}
\end{equation*}
By Lemma \ref{scalar}, $a_1 \leq \epsilon^{\frac{2-\beta}{2}(q-1)}$.
For $a_2$, we use the change of variables
$\tilde{v}=\frac{x-v}{\sqrt{t-r}}$,
$\tilde{z}=\frac{x-z}{\sqrt{t-r}}$, to see that
\begin{equation*} \begin{split}
a_2&=\int_{s-\e}^s dr \int_{\mathbb{R}^k} d\tilde{v}
\int_{\mathbb{R}^k} d\tilde{z}\, \Vert \tilde{v}-\tilde{z}
\Vert^{-\beta} (t-r)^{-\beta/2} S(1,\tilde{v}) S(1, \tilde{z})\, \Vert
\tilde{v} \Vert^{\frac{\gamma q}{2}} \Vert
\tilde{z} \Vert^{\frac{\gamma q}{2}} (t-r)^{\frac{\gamma q}{2}} \\
&=\int_{s-\e}^s dr\, (t-r)^{\frac{\gamma
q}{2}-\frac{\beta}{2}}\int_{\mathbb{R}^k} d\tilde{v}
\int_{\mathbb{R}^k} d\tilde{z} \, S(1,\tilde{v}) S(1, \tilde{z})
\Vert \tilde{v}-\tilde{z} \Vert^{-\beta} \Vert \tilde{v}
\Vert^{\frac{\gamma q}{2}} \Vert \tilde{z}
\Vert^{\frac{\gamma q}{2}} \\
&=c \biggl( (t-s+\epsilon)^{\frac{2-\beta}{2}+\frac{\gamma
q}{2}}-(t-s)^{\frac{2-\beta}{2}+\frac{\gamma q}{2}} \biggr) \\
&\leq c \, \epsilon^{\frac{2-\beta}{2}+\frac{\gamma q}{2}},
\end{split}
\end{equation*}
since $t-s<\epsilon$. Putting together this bounds for $a_1$ and $a_2$
yields $\E[\vert A_3 \vert^q] \leq c
\epsilon^{\frac{2-\beta}{2}+\frac{\gamma q}{2}}$.

We now study the term $\tilde{B_4}$, with the objective of showing
that $\tilde{B_4} \leq \Phi(\alpha) \epsilon^{\frac{2-\beta}{2}}$,
with $\lim_{\alpha \rightarrow +\infty} \Phi(\alpha)=0$. We note
that by hypothesis {\bf P1},
\begin{equation*} \begin{split}
\tilde{B_4} &\leq c \int_{s-\e}^s dr \int_{\mathbb{R}^k} dv
\int_{\mathbb{R}^k} dz \, \Vert v-z \Vert^{-\beta} S(s-r,y-v) S(t-r,
x-z) \\
&= c \int_{s-\e}^s dr \int_{\mathbb{R}^k} dv \,\Vert v \Vert^{-\beta}
(S(s-r,y-\cdot) \ast S(t-r, \cdot-x))(v) \\
&= c \int_{s-\e}^s dr \int_{\mathbb{R}^k} dv \,\Vert v \Vert^{-\beta}
S(t+s-2r,y-x+v),
\end{split}
\end{equation*}
where we have used the semigroup property of $S(t,v)$. Using the change of variables $\bar r = s-r$, it follows
that
\begin{equation*} \begin{split}
\tilde{B_4} &\leq 
  c \int_{0}^{\e} d\bar r \int_{\mathbb{R}^k} dv \, \Vert v \Vert^{-\beta}
(t-s+2\bar r)^{-k/2} \exp \biggl(-\frac{\Vert y-x+v\Vert^2}{2(t-s+2\bar r)}
\biggr) \\
&=:c(I_1+I_2),
\end{split}
\end{equation*}
where
\begin{equation*} \begin{split}
I_1 &=\int_{0}^{\e} dr \int_{\Vert v \Vert < \sqrt{r}(1+\alpha)} dv \,
\Vert v \Vert^{-\beta} (t-s+2r)^{-k/2} \exp \biggl(-\frac{\Vert
y-x+v\Vert^2}{2(t-s+2r)} \biggr), \\
I_2&=\int_{0}^{\e} dr \int_{\Vert v \Vert \geq \sqrt{r}(1+\alpha)}
dv \, \Vert v \Vert^{-\beta} (t-s+2r)^{-k/2} \exp \biggl(-\frac{\Vert
y-x+v\Vert^2}{2(t-s+2r)} \biggr).
\end{split}
\end{equation*}
Concerning $I_1$, observe that when $\Vert v \Vert <
\sqrt{r}(1+\alpha)$, then
\begin{equation*} \begin{split}
\Vert y-x+v \Vert \geq \Vert y-x \Vert-\Vert v \Vert \geq \Vert y-x
\Vert- \sqrt{\e} (1+\alpha) \geq \frac12 \Vert y-x \Vert \geq \alpha
\sqrt{\e},
\end{split}
\end{equation*}
since we have assumed that $(1+\alpha) \sqrt{\e} <\frac12 \Vert
y-x \Vert$. Therefore,
\begin{equation*} \begin{split}
I_1 &\leq \int_{0}^{\e} dr \, (t-s+2r)^{-k/2} \exp
\biggl(-\frac{\alpha^2 \e}{2(t-s+2r)} \biggr) \int_{\Vert v \Vert <
\sqrt{r}(1+\alpha)} dv\, \Vert v \Vert^{-\beta},
\end{split}
\end{equation*}
and the $dv$-integral is equal to $(1+\alpha)^{k-\beta}
r^{\frac{k-\beta}{2}}$, so
\begin{equation*} \begin{split}
I_1 &\leq (1+\alpha)^{k-\beta} \int_{0}^{\e} dr \, (t-s+2r)^{-k/2}
r^{\frac{k-\beta}{2}} \exp \biggl(-\frac{\alpha^2 \e}{2(t-s+2r)}
\biggr) \\
&\leq (1+\alpha)^{k-\beta} \int_{0}^{\e} dr \, (t-s+2r)^{-\beta/2}
\exp \biggl(-\frac{\alpha^2 \e}{2(t-s+2r)}
\biggr),
\end{split}
\end{equation*}
where the second inequality uses the fact that $k-\beta>0$.
Use the change of variables $\rho=\frac{t-s+2r}{\alpha^2 \e}$ and the inequality $t-s \leq \e$ to see that
\begin{equation*} \begin{split}
I_1 &\leq (1+\alpha)^{k-\beta} \int_{\frac{t-s}{\alpha^2
\e}}^{\frac{t-s+2\e}{\alpha^2 \e}} d\rho \, \alpha^2 \e \,(\alpha^2
\e \rho)^{-\beta/2} \exp \biggl(-\frac{1}{2\rho}
\biggr) \\
&\leq \e^{\frac{2-\beta}{2}} (1+\alpha)^{k-\beta} \alpha^{2-\beta}
\int_0^{3/\alpha^2} d\rho \, \rho^{-\beta/2} \exp
\biggl(-\frac{1}{2\rho} \biggr)=:\e^{\frac{2-\beta}{2}}
\Phi_1(\alpha).
\end{split}
\end{equation*}
We note that $\lim_{\alpha \rightarrow +\infty} \Phi_1(\alpha)=0$.

Concerning $I_2$, note that
\begin{equation*} \begin{split}
I_2& \leq \int_{0}^{\e} dr \int_{\Vert v \Vert > \sqrt{r}(1+\alpha)}
dv\, r^{-\beta/2} (1+\alpha)^{-\beta} (t-s+2r)^{-k/2} \exp
\biggl(-\frac{\Vert y-x+v\Vert^2}{2(t-s+2r)} \biggr) \\
&\leq (1+\alpha)^{-\beta} \int_{0}^{\e} dr \, r^{-\beta/2}
\int_{\mathbb{R}^k} dv \, (t-s+2r)^{-k/2} \exp
\biggl(-\frac{\Vert y-x+v\Vert^2}{2(t-s+2r)} \biggr) \\
&= c (1+\alpha)^{-\beta} \e^{\frac{2-\beta}{2}}.
\end{split}
\end{equation*}
We note that $\lim_{\alpha \rightarrow +\infty}
(1+\alpha)^{-\beta}=0$, and so we have shown that $\tilde{B_4} \leq
\Phi(\alpha) \e^{\frac{2-\beta}{2}}$, with $\lim_{\alpha \rightarrow
+\infty} \Phi(\alpha)=0$.

Using (\ref{case21}), we have shown that
\begin{equation*} \begin{split}
\inf_{\Vert \xi \Vert=1} \xi^{\sf T} \gamma_Z \xi &\geq \frac23\, A_1 - 8(A_2+A_3 + A_4+A_5)+\frac23\, B_1 - 4(B_2 + B_3) \\
&\geq \frac23 (\tilde{A_1}+\tilde{A_2} + B_1)-\frac43\, \tilde{B_4} -8( 
A_2 + A_3  + A_4 + A_5) -4( B_2+ B_3) \\
&\geq \frac23\, c\, \e^{\frac{2-\beta}{2}}-4 \Phi(\alpha)
\e^{\frac{2-\beta}{2}}-Z_{1, \e},
\end{split}
\end{equation*}
where $\E[\vert Z_{1, \e} \vert^q] \leq
\e^{\frac{2-\beta}{2}q+\frac{\gamma}{2}q}$. We choose $\alpha$ large
enough so that $\Phi(\alpha) < \frac{1}{12} c$, to get
\begin{equation*}
\inf_{\Vert \xi \Vert=1} \xi^{\sf T} \gamma_Z \xi \geq \frac13 c
\e^{\frac{2-\beta}{2}}-Z_{1,\e}.
\end{equation*}

\vskip 12pt \noindent\emph{Sub-Case B.}
Suppose that $\e\le t-s\le |x-y|^2.$ As in (\ref{minimum}), we have
\begin{equation*}
\inf_{\|\xi\|=1} \xi^{\sf T}\gamma_Z\xi \ge
\min\left( c \e^{\frac{2-\beta}{2}+\eta}-Y_{1, \e}, c \e^{\frac{2-\beta}{2}} -Y_{2, \e}
\right),
\end{equation*}
where $\E[ \vert Y_{1, \e} \vert^q] \leq c \,
\e^{\frac{2-\beta}{2}q+\frac{\gamma}{2}q}$ and $\E[ \vert Y_{2, \e}
\vert^q] \leq c \, \e^{\frac{2-\beta}{2}q+\eta q}$. This suffices
for {\it Sub-Case B}. 
\vskip 12pt 

   Now, we combine {\it Sub-Cases A} and {\it B} to see that for $0< \e < \frac14
(1+\alpha)^{-2} \Vert x-y \Vert^2$,
\begin{equation*}
\inf_{\|\xi\|=1} \xi^{\sf T}\gamma_Z\xi \ge
\min\left( c \e^{\frac{2-\beta}{2}+\eta}-Y_{1, \e}, c \e^{\frac{2-\beta}{2}} -Y_{2, \e} 1_{\{\e \leq t-s\}} -Z_{1, \e} 1_{\{ t-s<\e\}}\right).
\end{equation*}
By \cite[Proposition 3.5]{Dalang2:05}, we see that
\begin{equation*} \begin{split}
\E \biggl[ \biggl(\inf_{\Vert \xi \Vert=1} \xi^{\sf T} \gamma_Z \xi
\biggr)^{-2dp} \biggr] &\leq c \Vert x-y
\Vert^{2(-2dp)(\frac{2-\beta}{2}+\eta)} \\
&\leq c (\vert t-s \vert +\Vert x-y
\Vert^2)^{-2dp(\frac{2-\beta}{2}+\eta)}\\
&\leq c (\vert t-s \vert^{\frac{2-\beta}{2}} +\Vert x-y
\Vert^{2-\beta})^{-2dp(1+\tilde{\eta})}
\end{split}
\end{equation*}
(in the second inequality, we have used the fact that $\Vert x-y\Vert^2 \geq t-s$). This concludes the proof of Proposition \ref{smalle}. 
\end{proof}

\begin{proof}[Proof of Proposition \ref{large}]

Let $0< \e < s\leq t$. Fix $i_0 \in \{1,\dots,2d\}$ and write $\tilde{\lambda}^{i_0}=(\tilde{\lambda}_1^{i_0},...,\tilde{\lambda}_d^{i_0})$ and
$\tilde{\mu}^{i_0}=(\tilde{\mu}_1^{i_0},...,\tilde{\mu}_d^{i_0})$. We look at $(\xi^{i_0})^{\sf T} \gamma_Z \xi^{i_0}$ on the event $\{\alpha_{i_0} \geq \alpha_0\}$. As in the proof of Proposition \ref{smalle} and using the notation from (\ref{xi}), this is bounded below by
\begin{equation}\label{large1}
\begin{split}
& \int_{s-\epsilon}^s dr \sum_{l=1}^d \bigg\Vert \sum_{i=1}^d
\biggl[ \bigg(\alpha_{i_0}
\tilde{\lambda}^{i_0}_i S(s-r, y-\cdot)\\
&\qquad +\tilde{\mu}_i^{i_0} \sqrt{1-\alpha_{i_0}^2}
\left( S(t-r, x-\cdot)-S(s-r, y-\cdot) \right)\bigg)
\sigma_{i, l}(u(r,\cdot))\\
&\qquad + \alpha_{i_0} \tilde{\lambda}_i^{i_0}
a_i(l,r,s,y)\\
&\qquad +\tilde{\mu}_i^{i_0} \sqrt{1-\alpha_{i_0}^2}
\left( a_i(l,r,t,x)-a_i(l,r,s,y) \right) \biggr] \bigg\Vert_{\mathcal{H}}^2 \\
&\qquad + \int_{s \vee (t-\epsilon)}^t
dr \sum_{l=1}^d \bigg\Vert \sum_{i=1}^d \biggl[ \tilde{\mu}_i^{i_0}
\sqrt{1-\alpha_{i_0}^2} S(t-r, x-\cdot) \sigma_{i,l}(u(r,\cdot)) \\
& \qquad + \tilde{\mu}_i^{i_0} \sqrt{1-\alpha_{i_0}^2}\
a_i(l,r,t,x) \biggr] \bigg\Vert_{\mathcal{H}}^2.
\end{split}\end{equation}
We seek lower bounds for this expression for $0 < \varepsilon < \varepsilon_0$, where $\varepsilon_0 \in (0,\frac{1}{2})$. In the remainder of this proof, we will use the generic notation $\alpha, \tilde \lambda$ and $\tilde \mu$ for the realizations $\alpha_{i_0}(\omega)$, $\tilde{\lambda}^{i_0}(\omega)$, and $\tilde{\mu}^{i_0}(\omega)$. Our proof follows the structure of \cite[Theorem 3.4]{Dalang:11}, rather than \cite[Proposition 6.13]{Dalang2:05}.
\vskip 12pt
\noindent{\bf Case 1.} $t-s > \e$. Fix $\gamma \in (0, 2-\beta)$ and let $\eta$ be
such that $\eta < \gamma /2$. We note that
\begin{equation*}
\inf_{1 \geq \alpha \geq \alpha_0}
\left(\xi^{i_0} \right)^{\sf T}
\gamma_Z \xi^{i_0} :=\min(E_{1,\e},E_{2,\e}),
\end{equation*}
where
\begin{equation*}
E_{1,\e} := \inf_{\alpha_0 \leq \alpha \leq \sqrt{1-\e^\eta}}
\left(\xi^{i_0} \right)^{\sf T} \gamma_Z \xi^{i_0},\qquad
E_{2,\e} := \inf_{\sqrt{1-\e^\eta} \leq \alpha \leq 1}
\left(\xi^{i_0} \right)^{\sf T} \gamma_Z \xi^{i_0}.
\end{equation*}

Using (\ref{sumaqua}) and (\ref{large1}), we see that
\begin{equation*}
E_{1,\e} \geq \inf_{\alpha_0 \leq \alpha \leq \sqrt{1-\e^\eta}} \left(\frac23 G_{1,\epsilon}-2 \bar G_{1,\epsilon} \right),
\end{equation*}
where
\begin{equation*} \begin{split}
G_{1,\epsilon} & := (1-\alpha^2) \int_{s \vee (t-\epsilon)}^t dr
\, \sum_{l=1}^d \Vert (\tilde{\mu}^{\sf T} \cdot \sigma(u(r,\cdot)))_l\, S(t-r,x -\cdot)
\Vert_\cH^2, \\
\bar G_{1,\e} &:= \int_{t-\e}^t dr \sum_{l=1}^d \bigg\Vert \sum_{i=1}^d
\tilde\mu_i \sqrt{1 - \alpha^2}\ a_i(l,r,t,x) \bigg\Vert_{\mathcal{H}}^2.
\end{split}
\end{equation*}

Using the same ``localisation argument'' as in the proof
of Proposition \ref{det} (see \eqref{equa45}), we have that there exists a random variable $W_{\epsilon}$ such that
\begin{equation} \label{f2}
G_{1,\epsilon} \geq \rho^2 c (1-\alpha^2) ( (t-s) \wedge \epsilon)^{\frac{2-\beta}{2}}-2 W_{\epsilon},
\end{equation}
where, for any $q \geq 1$,
\begin{equation*}
\E [\vert W_{\epsilon} \vert^q]\leq c_q \epsilon^{\frac{2-\beta}{2}q+\frac{\gamma}{2}q}.
\end{equation*}
Hence, using the fact that $1-\alpha^2 \geq \e^{\eta}$ and $t-s >\e$, we deduce that
\begin{equation*}
E_{1,\e} \geq c \e^{\frac{2-\beta}{2}+\eta}-2 W_{\epsilon} - 2 \bar G_{1,\e},
\end{equation*}
where, from Lemma \ref{lem:7}, $\E \, [| \bar G_{1,\e}|^q] \leq c_q \e^{(2-\beta) q}$, for any $q\geq 1$,

We now estimate $E_{2,\e}$. Using (\ref{sumaqua}) and (\ref{large1}), we see that
\begin{equation*}
E_{2,\e} \geq \frac{2}{3} G_{2,\e} - 8 (\bar G_{2,1,\e} + \bar G_{2,2,\e} +
\bar G_{2,3,\e} + \bar G_{2,4,\e}),
\end{equation*}
where
\begin{equation*}\begin{split}
G_{2,\e} &:= \alpha^2 \int_{s-\epsilon}^s dr
\sum_{l=1}^d \Vert (\tilde{\lambda}^{\sf T} \cdot \sigma(u(r,\cdot)))_l\, S(s-r,y -\cdot)
\Vert_\cH^2,\\
\bar G_{2,1,\e} &:= (1-\alpha^2) \int_{s-\epsilon}^s dr
\sum_{l=1}^d \Vert (\tilde{\mu}^{\sf T} \cdot \sigma(u(r,\cdot)))_l\, S(t-r,x -\cdot)
\Vert_\cH^2,\\
\bar G_{2,2,\e} &:= (1-\alpha^2) \int_{s-\epsilon}^s dr
\sum_{l=1}^d \Vert (\tilde{\mu}^{\sf T} \cdot \sigma(u(r,\cdot)))_l\, S(s-r,y -\cdot)
\Vert_\cH^2, \\
\bar G_{2,3,\e} &:= \int_{s-\e}^s dr \sum_{l=1}^d \bigg\Vert \sum_{i=1}^d
\left(\alpha \tilde\lambda_i - \tilde\mu_i \sqrt{1 - \alpha^2}
\right) a_i(l,r,s,y)
\bigg\Vert_{\mathcal{H}}^2, \\
\bar G_{2,4,\e} &:= (1-\alpha^2) \int_{s-\e}^s dr \sum_{l=1}^d \bigg\Vert \sum_{i=1}^d
\tilde\mu_i a_i(l,r,t,x) \bigg\Vert_{\mathcal{H}}^2.
\end{split}\end{equation*}

As for the term $G_{1,\e}$ in (\ref{f2}) and using the fact that $\alpha^2 \geq 1 - \e^\eta$, we get that
$$G_{2,\e} \geq c \e^{\frac{2-\beta}{2}}-2 W_{\epsilon},$$
where, for any $q \geq 1$,
$
\E [\vert W_{\epsilon} \vert^q]\leq c_q \epsilon^{\frac{2-\beta}{2}q+\frac{\gamma}{2}q}.
$
On the other hand, since $1-\alpha^2 \leq \e^\eta$, we can use hypothesis {\bf P1} and Lemma \ref{scalar} to see that
$$\E \, [| \bar G_{2,1,\e}|^q] \leq
c_q \e^{(\frac{2-\beta}{2}+\eta)q},$$
and similarly, using Lemma \ref{scalar},
$$\E \, [| \bar G_{2,2,\e}|^q] \leq
c_q \e^{(\frac{2-\beta}{2}+\eta)q}.$$
Finally, using Lemma \ref{lem:7}, we have that
$$
\E \, [|\bar G_{2,3,\e}|^q] \leq
c_q \e^{(2-\beta) q}, \text{ and } \E \, [| \bar G_{2,4,\e}|^q] \leq
c_q \e^{\eta q} (t-s+\e)^{\frac{2-\beta}{2}q} \e^{\frac{2-\beta}{2}q} \leq c_q \e^{(\frac{2-\beta}{2}+\eta)q}.
$$
We conclude that $E_{2,\e} \geq c \e^\frac{2-\beta}{2} - J_\e$,
where $\E[|J_\e |^q] \leq c_q \e^{(\frac{2-\beta}{2}+\eta)q}$. Therefore,
when $t-s > \e$,
\begin{equation*}
1_{\{\alpha_{i_0} \geq \alpha_0 \}}\
\left(\xi^{i_0}\right)^{\sf T}
\gamma_Z \xi^{i_0}
\geq 1_{\{\alpha_{i_0} \geq \alpha_0 \}}\
\min\left(c \e^{\frac{2-\beta}{2}+\eta} - V_{\e} ~,~ c \e^\frac{2-\beta}{2}
- J_\e\right),
\end{equation*}
where $\E [\vert V_{\epsilon} \vert^q]\leq c_q \epsilon^{\frac{2-\beta}{2}q+\frac{\gamma}{2}q}$.
\vskip 12pt

\noindent{\bf Case 2.} $t-s \leq \e$, $\frac{\vert x-y \vert^2}{\delta_0}\leq \epsilon$. The constant $\delta_0$ will be chosen sufficiently large (see \eqref{eqdelta0}). Fix $\theta \in (0,\frac{1}{2})$ and $\gamma \in (0, 2-\beta)$. From (\ref{sumaqua}) and (\ref{large1}), we have that
\begin{equation*}
1_{\{\alpha_{i_0} \geq \alpha_0 \}}\ \left(\xi^{i_0}\right)^{\sf T} \gamma_Z \xi^{i_0}
\geq \frac23 G_{3, \e^{\theta}}-8 (\bar G_{3,1,\e^{\theta}} - \bar G_{3,2,\e^{\theta}} -
\bar G_{3,3,\e^{\theta}} -\bar G_{3,4,\e^{\theta}}),
\end{equation*}
where
\begin{equation*}\begin{split}
G_{3,\e^{\theta}} &:= \alpha^2 \int_{s-\e^{\theta}}^s dr
\sum_{l=1}^d \Vert (\tilde{\lambda}^{\sf T} \cdot \sigma(u(r,y)))_l\, S(s-r,y -\cdot)
\Vert_\cH^2,\\
\bar G_{3,1,\e^{\theta}} &:= \alpha^2 \int_{s-\e^{\theta}}^s dr
\sum_{l=1}^d \Vert (\tilde{\lambda}^{\sf T} \cdot (\sigma(u(r,\cdot))-\sigma(u(r,y))) )_l\, S(s-r,y -\cdot)
\Vert_\cH^2,\\
\bar G_{3,2,\e^{\theta}}&:= \int_{s-\e^{\theta}}^s dr \sum_{l=1}^d \bigg\Vert \sum_{i=1}^d
\left(\alpha \tilde\lambda_i - \tilde\mu_i \sqrt{1 - \alpha^2}
\right) a_i(l,r,s,y)
\bigg\Vert_{\mathcal{H}}^2, \\
\bar G_{3,3,\e^{\theta}} &:=(1 - \alpha^2) \int_{s-\e^{\theta}}^s dr \sum_{l=1}^d \bigg\Vert
(\tilde{\mu}^{\sf T} \cdot \sigma(u(r,\cdot)))_l\, (S(t-r,x -\cdot)-S(s-r,y -\cdot))
\bigg\Vert_{\mathcal{H}}^2, \\
\bar G_{3,4,\e^{\theta}} &:=(1 - \alpha^2) \int_{s-\e^{\theta}}^s dr \sum_{l=1}^d \bigg\Vert \sum_{i=1}^d
\tilde\mu_i \ a_i(l,r,t,x) \bigg\Vert_{\mathcal{H}}^2.
\end{split}\end{equation*}
By hypothesis {\bf P2} and Lemma \ref{scalar}, since $t-s \leq \e$ and $\alpha \geq \alpha_0$, we have that
\begin{equation*}
G_{3,\e^{\theta}} \geq \alpha^2_0 c \e^{\theta \frac{2-\beta}{2}}.
\end{equation*}

   As in the proof of Proposition \ref{det} (see in particular \eqref{add} to \eqref{equa45}), 
we get that for any $q \geq 1$,
\begin{equation*}
\E \, [\vert \bar G_{3,1,\e^{\theta}} \vert^q ]\leq C \e^{\theta (\frac{2-\beta}{2}+\frac{\gamma}{2})q}.
\end{equation*}

Appealing to Lemma \ref{lem:7} and using the fact that $t-s \leq \e$, we see that
$$
  \E \, [|\bar G_{3,2,\e^{\theta}}|^q] \leq c_q \e^{\theta (2-\beta) q}
$$
and
$$
    \E \, [| \bar G_{3,4,\e^{\theta}}|^q] \leq c_q (t-s+\e^{\theta})^{\frac{2-\beta}{2}q} \e^{\theta \frac{2-\beta}{2}q} \leq c_q \e^{\theta (2-\beta)q}.
$$

It remains to find an upper bound for $\bar G_{3,3,\e^{\theta}}$. From Burkholder's inequality, for any $q \geq 1$,
\begin{equation} \label{wo}
\E \, [\vert \bar G_{3,3,\e^{\theta}} \vert^q ]\leq c_q (W_{1,\e^{\theta}}+W_{2, \e^{\theta}}),
\end{equation}
where
\begin{equation*}
\begin{split}
W_{1,\e^{\theta}}&= \E \left[ \bigg\vert \int_{s-\e^{\theta}}^s \int_{\R^k} (S(t-r, x-z)-S(s-r,x-z)) \sum_{l=1}^d 
(\tilde{\mu}^{\sf T} \cdot \sigma(u(r,z)))_l M^l(dr, dz) \bigg\vert^{2q} \right], \\
W_{2,\e^{\theta}}&= \E \left[ \bigg\vert \int_{s-\e^{\theta}}^s \int_{\R^k} (S(s-r, x-z)-S(s-r,y-z)) \sum_{l=1}^d (\tilde{\mu}^{\sf T} \cdot \sigma(u(r,z)))_l M^l(dr, dz) \bigg\vert^{2q} \right].
\end{split}
\end{equation*}

   As in the proof of Proposition \ref{derivat}, using the semigroup property of $S$, the Beta function and a stochastic Fubini's theorem (whose assumptions can be seen to be satisfied, see e.g. \cite[Theorem 2.6]{Walsh:86}), we see that for any $\alpha \in (0,\frac{2-\beta}{4})$,
\begin{equation} \label{fub2}
\begin{split}
&\int_{s-\e^{\theta}}^s \int_{\R^k} S(s-v,y-\eta)\sum_{l=1}^d 
(\tilde{\mu}^{\sf T} \cdot \sigma(u(v,\eta)))_l M^l (dv, d \eta) \\
&\qquad \qquad = \frac{\sin(\pi \alpha)}{\pi} \int_{s-\e^{\theta}}^s dr \int_{\R^k} dz\, S(s-r,y-z) (s-r)^{\alpha-1} Y_{\alpha}(r,z)
\end{split}
\end{equation}
where $Y=(Y_{\alpha}(r,z), r \in [0,T], z \in \R^k)$ is the real valued process defined as
\begin{equation*}
Y_{\alpha}(r,z)=\int_{s-\e^\theta}^r \int_{\R^k} S(r-v,z-\eta) (r-v)^{-\alpha} \sum_{l=1}^d 
(\tilde{\mu}^{\sf T} \cdot \sigma(u(v,\eta)))_l M^l(dv, d \eta).
\end{equation*}

We next estimate the $L^p(\Omega)$-norm of the process $Y$.
Using Burkholder's inequality, the boundedness of the coefficients of $\sigma$, and the change variables $\tilde \xi=\sqrt{r-v}\, \xi$, we see that
\begin{equation*}
\begin{split}
\E[ \vert Y_{\alpha}(r,z) \vert^p] &\leq c_p \left(\int_{s-\e^{\theta}}^r dv \int_{\R^k} d\xi\,
\Vert \xi \Vert^{\beta-k} \vert \mathcal{F} S(r-v,z-\cdot) (r-v)^{-\alpha} (\xi) \vert^2 \right)^{\frac{p}{2}} \\
&=c_p \left(\int_{s-\e^{\theta}}^r dv \int_{\R^k} d\xi\,
\Vert \xi \Vert^{\beta-k} (r-v)^{-2\alpha} e^{-4 \pi^2 (r-v) \Vert \xi \Vert^2} \right)^{\frac{p}{2}} \\
&=c_p \left(\int_{s-\e^{\theta}}^r dv\, (r-v)^{-2\alpha-\frac{\beta}{2}} \int_{\R^k} d\tilde \xi\,
\Vert \tilde \xi \Vert^{\beta-k} e^{-4 \pi^2 \Vert \tilde \xi \Vert^2} \right)^{\frac{p}{2}} \\
&\leq c_p (r-s+\e^{\theta})^{(\frac{2-\beta}{4}-\alpha)p}.
\end{split}
\end{equation*}
Hence,  we conclude that
\begin{equation} \label{lemaY2}
\sup_{(r,z) \in [s-\e^{\theta},s] \times \R^k} \E[ \vert Y_{\alpha}(r,z) \vert^p] \leq c_p \e^{\theta (\frac{2-\beta}{4}-\alpha)p}.
\end{equation}

Let us now bound $W_{1,\e^{\theta}}$. Using (\ref{fub2}) and Minskowski's inequality, we have that
\begin{equation*}
\begin{split}
W_{1,\e^{\theta}}&\leq \left( \int_{s-\e^{\theta}}^s dr \int_{\R^k} dz (\psi_{\alpha}(t-r, x-z)-\psi_{\alpha}(s-r,x-z)) \right)^{2q} \\
&\qquad \qquad \qquad \times \sup_{(r,z) \in [s-\e^{\theta},s] \times \R^k} \E[ \vert Y_{\alpha}(r,z) \vert^{2q}],
\end{split}
\end{equation*}
where $\psi_{\alpha}(t,x)=S(t,x) t^{-\alpha}$.
Then by (\ref{lemaY2}) and Lemma \ref{lemSS}(b), we obtain that for any $\gamma <4\alpha$,
\begin{equation*}
W_{1,\e^{\theta}}\leq c_q  \e^{\theta q (2\alpha - \frac{\gamma}{2})}\, \vert t-s \vert^{\frac{\gamma}{2} q} \,
 \e^{\theta (\frac{2-\beta}{2}-2\alpha)q}
= c_q \e^{\theta (\frac{2-\beta}{2}-\frac{\gamma}{2}) q} \vert t-s \vert^{\frac{\gamma}{2} q}.
\end{equation*}
Thus, using the fact that $t-s \leq \e$, we conclude that
\begin{equation} \label{w1}
W_{1,\e^{\theta}} \leq c_q  \e^{\theta (\frac{2-\beta}{2}-\frac{\gamma}{2}) q} \e^{\frac{\gamma}{2}q} =c_q
\e^{\theta \frac{2-\beta}{2} q} \e^{\frac{\gamma}{2}(1-\theta) q}.
\end{equation}

We finally treat $W_{2,\e^{\theta}}$. Using (\ref{fub2}) and Minskowski's inequality, we have that
\begin{equation*}
\begin{split}
W_{2,\e^{\theta}}&\leq \left( \int_{s-\e^{\theta}}^s dr \int_{\R^k} dz (\psi_{\alpha}(s-r, x-z)-\psi_{\alpha}(s-r,y-z)) \right)^{2q} \\
&\qquad \qquad \qquad \times \sup_{(r,z) \in [s-\e^{\theta},s] \times \R^k} \E[ \vert Y_{\alpha}(r,z) \vert^{2q}].
\end{split}
\end{equation*}
Then by (\ref{lemaY2}) and Lemma \ref{lemSS}(a), we obtain that for any $\gamma <4\alpha$,
\begin{equation*}
W_{2,\e^{\theta}}\leq c_q \, \e^{\theta q (2 \alpha - \gamma)}\, \vert x-y \vert^{\gamma q}\, \e^{\theta (\frac{2-\beta}{2}-2\alpha)q} 
= c_q \e^{\theta (\frac{2-\beta}{2}-\gamma )q} \vert x-y \vert^{\gamma q}.
\end{equation*}
Thus, using the fact that $\vert x-y \vert \leq \sqrt{\delta_0 \e}$, we conclude that
\begin{equation} \label{w2}
W_{2,\e^{\theta}} \leq c_q  \e^{\theta (\frac{2-\beta}{2}-\gamma) q} \delta_0^{\frac{\gamma}{2}q} \e^{\frac{\gamma}{2} q} =c_q \delta_0^{\frac{\gamma}{2}q}
\e^{\theta \frac{2-\beta}{2} q} \e^{\frac{\gamma}{2} (1-2\theta) q}.
\end{equation}

Finally, substituting (\ref{w1}) and (\ref{w2}) into (\ref{wo}) we conclude that
for any $q \geq 1$,
\begin{equation*}
\E \, [\vert \bar G_{3,3,\e^{\theta}} \vert^q ]\leq c_q \e^{\theta \frac{2-\beta}{2} q} \e^{\frac{\gamma}{2}(1-2\theta) q}.
\end{equation*}
Therefore, we have proved that in the Case 2,
\begin{equation*}
1_{\{\alpha_{i_0} \geq \alpha_0 \}}\
\left(\xi^{i_0}\right)^{\sf T}
\gamma_Z \xi^{i_0}
\geq 1_{\{\alpha_{i_0} \geq \alpha_0 \}} (c \e^{\theta \frac{2-\beta}{2}} - W_{\e}),
\end{equation*}
where $\E [\vert W_{\epsilon} \vert^q]\leq c_q \epsilon^{\theta \frac{2-\beta}{2}q+\frac{\gamma}{2}q \min(\theta,1-2\theta)}$.

\vskip 12pt
\noindent{\bf Case 3.} $t-s \leq \e$, $0<\e<\frac{\vert x-y \vert^2}{\delta_0}$. From (\ref{sumaqua}) and (\ref{large1}), we have that
\begin{equation*}
1_{\{\alpha_{i_0} \geq \alpha_0 \}}\ \left(\xi^{i_0}\right)^{\sf T} \gamma_Z \xi^{i_0}
\geq \frac23 G_{4, \e}-8 (\bar G_{4,1,\e} - \bar G_{4,2,\e} -
\bar G_{4,3,\e} -\bar G_{4,4,\e}),
\end{equation*}
where
\begin{equation*}\begin{split}
G_{4,\e} &:=\int_{s-\epsilon}^s dr
\sum_{l=1}^d \bigg\Vert \sum_{i=1}^d \bigg\{ \left( \alpha \tilde\lambda_i - \tilde\mu_i \sqrt{1 - \alpha^2}
\right) \sigma_{i,l}(u(r,y)) \, S(s-r,y -\cdot) \\
& \qquad \qquad \qquad \qquad +\sqrt{1 - \alpha^2} \tilde\mu_i \sigma_{i,l}(u(r,x)) S(t-r,x -\cdot)\bigg\} \bigg\Vert_\cH^2,\\
\bar G_{4,1,\e} &:=\int_{s-\epsilon}^s dr
\sum_{l=1}^d \left\Vert \sum_{i=1}^d \left(\alpha \tilde\lambda_i - \tilde\mu_i \sqrt{1 - \alpha^2}
\right) [\sigma_{i, l}(u(r,\cdot))-\sigma_{i,l}(u(r,y))] \, S(s-r,y -\cdot)
\right\Vert_\cH^2,\\
\bar G_{4,2,\e}&:= (1 - \alpha^2) \int_{s-\epsilon}^s dr
\sum_{l=1}^d \left\Vert \left(\tilde{\mu}^{\sf T} \cdot [\sigma(u(r,\cdot))-\sigma(u(r,x))] \right)_l\, S(t-r,x -\cdot)
\right\Vert_\cH^2, \\
\bar G_{4,3,\e} &:= \int_{s-\e}^s dr \sum_{l=1}^d \bigg\Vert \sum_{i=1}^d
(\alpha \tilde\lambda_i - \tilde\mu_i \sqrt{1 - \alpha^2}) \ a_i(l,r,s,y) \bigg\Vert_{\mathcal{H}}^2, \\
\bar G_{4,4,\e} &:=(1 - \alpha^2) \int_{s-\e}^s dr \sum_{l=1}^d \bigg\Vert \sum_{i=1}^d
\tilde\mu_i \ a_i(l,r,t,x) \bigg\Vert_{\mathcal{H}}^2.
\end{split}\end{equation*}
We start with a lower bound for $G_{4,\e}$. Observe that this term is similiar to the term $A_1$ in the Sub-Case A of the proof
of Proposition \ref{smalle}. Using the inequality $(a+b)^2\geq a^2+b^2-2\vert ab\vert$, we see
that $G_{4,\e} \geq G_{4,1,\e}+G_{4,2,\e}-2G_{4,3,\e}$, where
\begin{equation*} \begin{split}
G_{4,1,\e}&=\sum_{l=1}^d \int_{s-\e}^s
dr \, \bigg\Vert
S(s-r,y-\cdot) ((\alpha \lambda-\sqrt{1-\alpha^2}\mu)^{\sf T}
\cdot \sigma(u(r,y)))_l\bigg\Vert_{\mathcal{H}}^2, \\
G_{4,2,\e}&=\sum_{l=1}^d \int_{s-\e}^s
dr \, \bigg\Vert
S(t-r,x-\cdot) (\sqrt{1-\alpha^2}\mu^{\sf T}
\cdot \sigma(u(r,x)))_l\bigg\Vert_{\mathcal{H}}^2, \\
G_{4,3,\e}&=\sum_{l=1}^d \int_{s-\e}^s
dr \, \big\langle S(s-r,y-\cdot) ((\alpha\lambda-\sqrt{1-\alpha^2} \mu)^{\sf T}
\cdot \sigma(u(r,y)))_l, \\
&\qquad \qquad \qquad \qquad S(t-r,x-\cdot) (\alpha \mu^{\sf T} \cdot
\sigma(u(r,x)))_l\big\rangle_{\mathcal{H}}.
\end{split}
\end{equation*}
Hypothesis {\bf P2}, Lemma \ref{scalar}, and the fact that $t-s \leq \e$ imply that
\begin{equation*}
G_{4,1,\e}+G_{4,2,\e} \geq c (\Vert \alpha \lambda-\sqrt{1-\alpha^2}\mu \Vert^2 +\Vert \sqrt{1-\alpha^2}\, \mu \Vert^2) \e^{\frac{2-\beta}{2}}
\geq c_0 \e^{\frac{2-\beta}{2}}.
\end{equation*}
On the other hand, using the same computation as the one done for the term $\tilde{B_4}$ in the Sub-Case A of the proof
of Proposition \ref{smalle}, we conclude that $G_{4,3,\e} \leq
\Phi(\frac12\,\sqrt{\delta_0}-1) \e^{\frac{2-\beta}{2}}$, with $\lim_{\alpha \rightarrow
+\infty} \Phi(\alpha)=0$. Choose $\delta_0$ sufficiently large so that
\begin{equation}\label{eqdelta0}
\Phi(\frac12\,\sqrt{\delta_0}-1) \leq \frac{c_0}{2},
\end{equation}
so that $G_{4,\epsilon} \geq \frac{c_0}{2} \e^{\frac{2-\beta}{2}}$.

We next treat the terms $\bar G_{4,i,\e}$, $i=1,...,4$. Using the same argument as for the term $\bar G_{3,1,\e^\theta}$, we see that for any $q \geq 1$,
\begin{equation*}
\E \, [\vert \bar G_{4,1,\e} \vert^q ]\leq C \e^{(\frac{2-\beta}{2}+\frac{\gamma}{2})q}.
\end{equation*}
Appealing to Lemma \ref{lem:7} and using the fact that $t-s \leq \e$, we find that
$$
\E \, [|\bar G_{4,3,\e}|^q] \leq c_q \e^{(2-\beta) q}, \text{ and } \E \, [| \bar G_{4,4,\e}|^q] \leq c_q \e^{\theta (2-\beta)q}.
$$
Finally, we treat $\bar G_{4,2,\e}$. As in the proof of Proposition \ref{det},
using H\"older's inequality, the Lipschitz property of $\sigma$, Lemma \ref{scalar} and (\ref{equa3}),
we get that for any $q \geq 1$
\begin{equation*}
\E \, [\vert \bar G_{4,2,\e} \vert^q ]\leq C \e^{(\frac{2-\beta}{2})(q-1)} \times \Psi,
\end{equation*}
where
\begin{equation*}
\Psi=\int_{s-\epsilon}^{s} dr \, \int_{\R^k} dv \,
\int_{\R^k} dz \, \Vert z-v \Vert^{-\beta} S(t-r,x-v) S(t-r,x-z) \Vert x-v \Vert^{\frac{\gamma}{2} q}
\Vert x-z \Vert^{\frac{\gamma}{2} q}.
\end{equation*}
Changing variables $[\tilde{v}=\frac{x-v}{\sqrt{t-r}}, \tilde{z}=\frac{x-z}{\sqrt{t-r}}]$, this becomes
\begin{equation*}
\begin{split}
\Psi&=\int_{s-\e}^{s} dr \, (t-r)^{-\frac{\beta}{2}+\frac{\gamma q}{2}}
\int_{\R^k} d\tilde{v} \,
\int_{\R^k} d \tilde{z} \, S(1,\tilde{v}) S(1,\tilde{z })\Vert \tilde{v}-\tilde{z} \Vert^{-\beta} \Vert \tilde{z}\Vert^{\gamma q/2} \Vert\tilde{v}\Vert^{\gamma q/2} \\
&= C ((t-s+\e)^{\frac{2-\beta}{2}+\frac{\gamma q}{2}}-(t-s)^{\frac{2-\beta}{2}+\frac{\gamma q}{2}}) \\
&\leq C \e^{\frac{2-\beta}{2}+\frac{\gamma q}{2}}.
\end{split}
\end{equation*}
Hence, we obtain that for any $q \geq 1$,
\begin{equation*}
\E \, [\vert \bar G_{4,2,\e} \vert^q ]\leq C \e^{(\frac{2-\beta}{2}+\frac{\gamma}{2})q}.
\end{equation*}

Therefore, we have proved that in the Case 3,
\begin{equation*}
1_{\{\alpha_{i_0} \geq \alpha_0 \}}\
\left(\xi^{i_0}\right)^{\sf T}
\gamma_Z \xi^{i_0}
\geq 1_{\{\alpha_{i_0} \geq \alpha_0 \}} (c \e^{\frac{2-\beta}{2}} - G_{\e}),
\end{equation*}
where $\E [\vert G_{\epsilon} \vert^q]\leq c_q \epsilon^{(\frac{2-\beta}{2}+\frac{\gamma}{2})q}$. This completes Case 3.
\vskip 12pt

Putting together the results of the Cases 1, 2 and 3, we see that for $0 < \e \leq \e_0$,
\begin{equation*}
1_{\{\alpha_{i_0} \geq \alpha_0 \}}\
\left(\xi^{i_0}\right)^{\sf T} \gamma_Z \xi^{i_0}
\geq 1_{\{\alpha_{i_0} \geq \alpha_0 \}}\ Z,
\end{equation*}
where
\begin{equation*}
\begin{split}
&Z = \min\left(c \e^{\frac{2-\beta}{2}+\eta} - V_{\e} ,~c\e^\frac{2-\beta}{2} - J_\e \right)\1_{\{t-s>\e\}} +(c \e^{\theta \frac{2-\beta}{2}} - W_{\e})\1_{\{t-s \leq \e,\, \e\geq \frac{\vert x-y \vert^2}{\delta_0}\}} \\
& \qquad \qquad + 
(c\e^\frac{2-\beta}{2} - G_\e)\1_{\{t-s \leq \e <\frac{\vert x-y \vert^2}{\delta_0}\}},
\end{split}
\end{equation*}
where for any $q \geq 1$,
\begin{equation*}
\begin{split}
&\E \, [\vert V_{\e} \vert^q ]\leq C \epsilon^{(\frac{2-\beta}{2}+\frac{\gamma}{2})q}, \qquad 
\E \, [\vert J_{\e} \vert^q ]\leq C \epsilon^{(\frac{2-\beta}{2}+\eta)q},\\
&\E \, [\vert W_{\e} \vert^q ]\leq C \epsilon^{\theta \frac{2-\beta}{2}q+\frac{\gamma}{2}q \min(\theta,1-2\theta)}, \qquad
\E \, [\vert G_{\e} \vert^q ]\leq C \epsilon^{(\frac{2-\beta}{2}+\frac{\gamma}{2})q}.
\end{split}
\end{equation*}
Therefore,
\begin{equation*}
\begin{split}
Z & \geq  \min \Big( c \e^{\frac{2-\beta}{2}+\eta} - V_{\e}, \, 
c\e^{\frac{2-\beta}{2}} - J_\e \1_{\{t-s>\e\}} - G_\e \1_{\{t-s \leq \e <\frac{\vert x-y \vert^2}{\delta_0}\}}, \\
&\qquad \qquad \qquad \qquad 
\, c \e^{\theta\frac{2-\beta}{2}} 
- W_{\e} \1_{\{t-s \leq \e,\, \e\geq \frac{\vert x-y \vert^2}{\delta_0}\}}
\Big). \\
\end{split}
\end{equation*}
Note that all the constants are independent of $i_0$.
Then using \cite[Proposition 3.5]{Dalang2:05} (extended to the minimum of three terms instead of two), we deduce that for all $p \geq 1$, there is $C>0$ such that
$$
E\left[\left(1_{\{\alpha_{i_0} \geq \alpha_0 \}}\ \left(\xi^{i_0}\right)^{\sf T}
\gamma_Z \xi^{i_0} \right)^{-p}\right]
\leq E\left[1_{\{\alpha_{i_0} \geq \alpha_0 \}}\ Z^{-p}\right]
\leq E\left[Z^{-p}\right] \leq C.
$$
Since this applies to any $p \geq 1$, we can use H\"older's inequality to deduce \eqref{A}. This proves Proposition \ref{large}.
\end{proof}

   The following result is analogous to \cite[Theorem 6.3]{Dalang2:05}.
   
\begin{thm} \label{ga}
Fix $\eta, T>0$. Assume {\bf P1} and {\bf P2}. Let $I \times J \subset (0,T] \times \R^k$ be a closed non-trivial rectangle. For any $(s,y), (t,x) \in I \times J$, $s \leq t$, $(s,y) \neq (t,x)$, $k \geq 0$, and $p>1$, 
\begin{equation*}
\Vert (\gamma_Z^{-1})_{m,l} \Vert_{k,p}\leq
\begin{cases}
c_{k,p,\eta,T}(|t-s|^{\frac{2-\beta}{2}}+ \Vert x-y \Vert^{2-\beta})^{- \eta}&
\text{if \; $(m,l) \in {\bf (1)}$}, \\
c_{k,p,\eta,T} (|t-s|^{\frac{2-\beta}{2}}+\Vert x-y \Vert^{2-\beta})^{-1/2- \eta} &
\text{if \; $(m,l) \in {\bf (2)}$ or ${\bf (3)}$}, \\
c_{k,p,\eta,T} (|t-s|^{\frac{2-\beta}{2}}+\Vert x-y \Vert^{2-\beta})^{-1- \eta}& \text{if \; $(m,l) \in {\bf (4)}$}.
\end{cases}
\end{equation*}
\end{thm}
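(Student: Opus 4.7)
The plan follows the strategy of \cite[Theorem 6.3]{Dalang2:05}: write $(\gamma_Z^{-1})_{m,l} = (\det \gamma_Z)^{-1} (A_Z)_{m,l}$, where $A_Z$ is the cofactor matrix of $\gamma_Z$, and set $\Delta := |t-s|^{(2-\beta)/2} + \|x-y\|^{2-\beta}$. The three available ingredients are Proposition \ref{3p3} (negative moments of $\det \gamma_Z$), Proposition \ref{3p2} (moments of the cofactor entries), and Proposition \ref{dgamma} (Sobolev moments of the entries of $\gamma_Z$ itself). Since the Sobolev norm is $\Vert G \Vert_{k,p}^p = \E[|G|^p] + \sum_{j=1}^{k}\E[\Vert D^j G\Vert^p_{(\mathcal{H}^d_T)^{\otimes j}}]$, I will bound each term separately.

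I first treat the plain $L^p$-term ($j=0$). By Cauchy--Schwarz, $\Vert (\gamma_Z^{-1})_{m,l}\Vert_{0,p} \leq \Vert (A_Z)_{m,l}\Vert_{0,2p}\,\Vert (\det\gamma_Z)^{-1}\Vert_{0,2p}$. Proposition \ref{3p3} bounds the second factor by $C\,\Delta^{-d(1+\eta_1)}$ for any $\eta_1 > 0$. For the first, the elementary inequality $a^\alpha + b^\alpha \leq 2(a+b)^\alpha$, valid for $\alpha \in (0,1]$, applied with $\alpha = \gamma/(2-\beta)$, yields $|t-s|^{\gamma/2} + \Vert x-y\Vert^\gamma \leq 2\,\Delta^{\gamma/(2-\beta)}$. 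Substituting this into the exponents $d$, $d-1/2$, $d-1$ provided by Proposition \ref{3p2} in the three block cases and choosing $\gamma$ close enough to $2-\beta$ and $\eta_1$ close enough to $0$ (concretely $\gamma=(2-\beta)(1-\eta/(2d))$ and $\eta_1 = \eta/(2d)$ for block {\bf (1)}, and analogously for the other blocks) delivers the claimed bounds $\Delta^{-\eta}$, $\Delta^{-1/2-\eta}$, $\Delta^{-1-\eta}$.

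For $j \geq 1$, iterate the matrix-inverse differentiation identity
\begin{equation*}
D\bigl((\gamma_Z^{-1})_{m,l}\bigr) = -\sum_{i,i'=1}^{2d} (\gamma_Z^{-1})_{m,i}\, D(\gamma_Z)_{i,i'}\, (\gamma_Z^{-1})_{i',l}
\end{equation*}
by applying $D$ a total of $j$ times. A standard induction on $j$ shows that $D^j((\gamma_Z^{-1})_{m,l})$ is a finite sum of terms of the form
\begin{equation*}
(\gamma_Z^{-1})_{m,i_1}\,D^{k_1}(\gamma_Z)_{i_1,i_2}\,(\gamma_Z^{-1})_{i_2,i_3}\cdots D^{k_n}(\gamma_Z)_{i_{2n-1},i_{2n}}\,(\gamma_Z^{-1})_{i_{2n},l},
\end{equation*}
with $1\le n \le j$, $k_1+\cdots+k_n = j$, $k_r\ge 1$, and $i_1,\dots,i_{2n}\in\{1,\dots,2d\}$; crucially, no factor involves a derivative of $\gamma_Z^{-1}$ itself. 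Apply H\"older's inequality in $L^p(\Omega; (\mathcal{H}^d_T)^{\otimes j})$ to split the norm of such a product; bound each $(\gamma_Z^{-1})_{\cdot,\cdot}$ factor by the $j=0$ estimate just proved, and each $D^{k_r}(\gamma_Z)_{\cdot,\cdot}$ factor by Proposition \ref{dgamma}.

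The main bookkeeping task, and the principal technical obstacle, is to check that the resulting exponents of $\Delta$ telescope correctly. Grade each index by $\nu(i)\in\{0,1\}$, with $\nu(i)=0$ for $i\leq d$ and $\nu(i)=1$ otherwise; a pair $(a,b)$ lies in block {\bf (1)}, {\bf (2)/(3)}, or {\bf (4)} according as $\nu(a)+\nu(b)=0,1,2$. The target bound rewrites uniformly as $\Vert (\gamma_Z^{-1})_{a,b}\Vert_{0,q} \le C\,\Delta^{-(\nu(a)+\nu(b))/2-\eta_2}$, while the same $a^\alpha+b^\alpha$ trick turns Proposition \ref{dgamma} into $\Vert D^{k_r}(\gamma_Z)_{a,b}\Vert_{0,q} \le C\,\Delta^{(\gamma/(2-\beta))(\nu(a)+\nu(b))/2}$. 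Along each chain, every internal index $i_r$ ($r=1,\dots,2n$) appears in exactly one inverse factor and one derivative factor, so the exponents combine to
\begin{equation*}
-\frac{\nu(m)+\nu(l)}{2} - \frac{1}{2}\!\left(1-\frac{\gamma}{2-\beta}\right)\sum_{r=1}^{2n}\nu(i_r) - (n+1)\eta_2.
\end{equation*}
Since $n\leq j\leq k$ is bounded and $\sum_r\nu(i_r)\leq 2n\leq 2k$, choosing $\gamma$ sufficiently close to $2-\beta$ and $\eta_2$ sufficiently small absorbs the last two terms into any prescribed $\eta$. Summing over the (finitely many) chains and over $j\in\{0,\dots,k\}$ yields the announced Sobolev bound on $\Vert (\gamma_Z^{-1})_{m,l}\Vert_{k,p}$.
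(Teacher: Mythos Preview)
Your proof is correct and follows essentially the same strategy as the paper: write $(\gamma_Z^{-1})_{m,l}=(\det\gamma_Z)^{-1}(A_Z)_{m,l}$ and combine Propositions \ref{3p2} and \ref{3p3} for the $k=0$ case, then iterate the differentiation identity for $\gamma_Z^{-1}$ and invoke Proposition \ref{dgamma} for $k\ge 1$. Your grading-by-$\nu$ bookkeeping makes explicit the telescoping that the paper leaves to the reference \cite[Theorem 6.3]{Dalang2:05}; the only cosmetic difference is that the paper sets $\Delta=|t-s|^{1/2}+\Vert x-y\Vert$ rather than your $\Delta=|t-s|^{(2-\beta)/2}+\Vert x-y\Vert^{2-\beta}$, which is immaterial since $\Delta_{\text{paper}}^{\,2-\beta}\asymp\Delta_{\text{yours}}$.
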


\noindent{\em Proof.} As in the proof of  \cite[Theorem 6.3]{Dalang2:05}, we shall use Propositions \ref{3p2}--\ref{3p3}. Set $\Delta = \vert t-s \vert ^{1/2} + \Vert x-y\Vert$.

   Suppose first that $k=0$. Since the inverse of a matrix is the inverse of its determinant multiplied by its cofactor matrix, we use Proposition \ref{3p3} with $\eta$ replaced by $\tilde \eta = \frac{\eta}{2d(2-\beta)}$ and Proposition \ref{3p2} with $\gamma \in (0,2-\beta)$ such that $2 - \beta - \gamma = \frac{\eta}{2(d-\frac12)}$ to see that for $(m,l) \in{\bf (2)}$ or ${\bf (3)}$, 
\begin{align*}
   \Vert (\gamma_Z^{-1})_{m,l} \Vert_{0,p} &\leq c_{p,\eta,T}\, \Delta^{-d(2-\beta)(1+\tilde \eta)}\, \Delta^{\gamma(d-\frac12)}\\
   &= c_{p,\eta,T}\, \Delta^{-\frac{2-\beta}{2}}\,  \Delta^{(2-\beta - \gamma)\frac12}\, \Delta^{-d(2-\beta - \gamma) - \tilde \eta d (2-\beta) }\\
   &= c_{p,\eta,T}\, \Delta^{-\frac{2-\beta}{2}}\,  \Delta^{-(d-\frac12)(2-\beta - \gamma) - \tilde \eta d (2-\beta)} \\
   &= c_{p,\eta,T}\, \Delta^{-\frac{2-\beta}{2}}\,  \Delta^{-\eta}.
\end{align*}
This proves the statement for  $(m,l) \in{\bf (2)}$ or ${\bf (3)}$. The other two cases are handled in a similar way.

   For $k \geq 1$, we proceed recursively as in the proof of \cite[Theorem 6.3]{Dalang2:05}, using Proposition \ref{dgamma} instead of \ref{3p2}.
\hfill $\Box$
\vskip 16pt

\begin{remark} In \cite[Theorem 6.3]{Dalang2:05}, in the case where $d=1$ and $s=t$, a slightly stronger result, without the exponent $\eta$, is obtained. Here, when $s=t$, the right-hand sides of \eqref{eq5.629} and \eqref{eq5.1029} can be improved respectively to $C \Vert x-y \Vert^{-(2-\beta)d}$ and $C \Vert x-y \Vert^{-(2-\beta)2dp}$. Indeed, when $s=t$, Case 1 in the proof of Proposition \ref{smalle} does not arise, and this yields the improvement of \eqref{eq5.1029}, and, in turn, the improvement of \eqref{eq5.629}. However, this does not lead to an improvement of the result of Theorem \ref{ga} when $s=t$, because the exponent $\eta$ there is also due to the fact that $\gamma < 2-\beta$ in Proposition \ref{3p2}.
\end{remark}

In the next subsection, we will establish the estimate of Theorem \ref{t2}(b). For this,
we will use the following expression for the density of a
nondegenerate random vector that is a consequence of the integration by parts formula of Malliavin calculus.

\begin{cor}\label{no} \textnormal{~\cite[Corollary 3.2.1]{Nualart:98}}
Let $F=(F^1,...,F^d) \in (\mathbb{D}^{\infty})^d$ be a nondegenerate
random vector and let $p_F(z)$ denote the density of $F$ (see Theorem \ref{3t1}). Then for every subset
$\sigma$ of the set of indices $\{1,...,d\}$,
\begin{equation*}
p_F(z)=(-1)^{d-|\sigma|} \E [1_{\{F^i>z^i, i \in \sigma, \,
F^i < z^i, i \not\in \sigma \}} H_{(1,...,d)}(F,1)],
\end{equation*}
where $|\sigma|$ is the cardinality of $\sigma$, and 
\begin{equation*}
H_{(1,...,d)}(F,1)=\delta((\gamma_{F}^{-1} DF)^d
\delta((\gamma_{F}^{-1} DF)^{d-1} \delta( \cdots
\delta((\gamma_{F}^{-1} DF)^{1})\cdots))).
\end{equation*}
\end{cor}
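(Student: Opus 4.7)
The plan is to derive the formula as an iterated application of the Malliavin integration by parts formula, then pass from smooth test functions to indicators of orthants. The starting point is the one-step duality: for any $F \in (\mathbb{D}^\infty)^d$ that is nondegenerate and any $G \in \mathbb{D}^\infty$, one has, for smooth $\varphi$ with bounded derivatives,
\begin{equation*}
\E[\partial_i \varphi(F)\, G] = \E[\varphi(F)\, H_i(F,G)],
\qquad
H_i(F,G) := \delta\!\left((\gamma_F^{-1} DF)^i\, G\right),
\end{equation*}
which follows from the chain rule $D(\varphi(F)) = \sum_j \partial_j\varphi(F)\, DF^j$, the identity $\sum_j \partial_j\varphi(F)\,\langle DF^j, (\gamma_F^{-1}DF)^i\rangle_{\cH_T^d} = \partial_i\varphi(F)$, and the duality between $D$ and $\delta$. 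Because $\gamma_F^{-1}\in L^p$ for every $p$ by nondegeneracy and $F\in(\mathbb{D}^\infty)^d$, the argument of $\delta$ lies in the domain of $\delta$ and $H_i(F,G)\in\bigcap_p L^p$.

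Iterating this identity $d$ times yields, for smooth $\varphi$ with bounded derivatives up to order $d$,
\begin{equation*}
\E\!\left[\partial_1\cdots\partial_d \varphi(F)\right] = \E\!\left[\varphi(F)\, H_{(1,\ldots,d)}(F,1)\right],
\end{equation*}
with $H_{(1,\ldots,d)}(F,1)$ defined as the nested Skorohod integral in the statement; the fact that each intermediate weight lies in $\bigcap_p L^p$ justifies the iteration.

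Given a subset $\sigma\subseteq\{1,\ldots,d\}$ and a point $z=(z^1,\ldots,z^d)$, I would take
\begin{equation*}
\varphi_\sigma(x) = \prod_{i\in\sigma} \mathbf{1}_{\{x^i > z^i\}} \prod_{i\notin\sigma}\mathbf{1}_{\{x^i < z^i\}},
\end{equation*}
and approximate $\varphi_\sigma$ by smooth functions $\varphi_\sigma^{(n)}$ such that $\partial_1\cdots\partial_d\varphi_\sigma^{(n)}$ is a smooth approximation of $\prod_{i\in\sigma}\delta_{z^i}\cdot\prod_{i\notin\sigma}(-\delta_{z^i}) = (-1)^{d-|\sigma|}\delta_z$. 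Applying the iterated integration by parts formula to each $\varphi_\sigma^{(n)}$ gives
\begin{equation*}
\E\!\left[(\partial_1\cdots\partial_d\varphi_\sigma^{(n)})(F)\right]
= \E\!\left[\varphi_\sigma^{(n)}(F)\, H_{(1,\ldots,d)}(F,1)\right].
\end{equation*}
The left-hand side converges to $(-1)^{d-|\sigma|} p_F(z)$ by continuity of $p_F$ (Theorem \ref{3t1}), while the right-hand side converges to $\E[\mathbf{1}_{\{F^i>z^i,\, i\in\sigma;\ F^i<z^i,\, i\notin\sigma\}}\,H_{(1,\ldots,d)}(F,1)]$ by dominated convergence, using $\varphi_\sigma^{(n)}\to\varphi_\sigma$ pointwise a.e.\ (the boundary $\{F^i=z^i\}$ has probability zero since $F$ has a density) together with $H_{(1,\ldots,d)}(F,1)\in L^1(\Omega)$. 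Rearranging yields the stated identity.

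The main obstacle is not the iteration itself but the limiting argument: one must produce smooth approximants whose derivatives converge in the distributional sense to $\pm\delta_z$ and then transfer that convergence through the expectation. This is handled cleanly by choosing $\varphi_\sigma^{(n)}$ as convolutions of $\varphi_\sigma$ with a smooth mollifier at scale $1/n$, noting that $\partial_1\cdots\partial_d\varphi_\sigma^{(n)}$ is then $(-1)^{d-|\sigma|}$ times a mollified Dirac mass at $z$, and invoking the continuity (indeed $C^\infty$-smoothness) of $p_F$ on the left side, while the right side is controlled uniformly in $n$ by $\E[|H_{(1,\ldots,d)}(F,1)|]$ and $\varphi_\sigma^{(n)}$ is uniformly bounded by $1$.
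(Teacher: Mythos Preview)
The paper does not prove this corollary; it is quoted with attribution to \cite[Corollary 3.2.1]{Nualart:98} and used as a black box. Your argument is correct and is precisely the standard derivation: iterate the one-step Malliavin integration-by-parts identity $\E[\partial_i\varphi(F)\,G]=\E[\varphi(F)\,\delta((\gamma_F^{-1}DF)^i G)]$ to obtain $\E[\partial_1\cdots\partial_d\varphi(F)]=\E[\varphi(F)\,H_{(1,\dots,d)}(F,1)]$, then approximate the orthant indicator $\varphi_\sigma$ by mollification so that $\partial_1\cdots\partial_d\varphi_\sigma^{(n)}=(-1)^{d-|\sigma|}\rho_n(\cdot-z)$ and pass to the limit using the continuity of $p_F$ on the left and dominated convergence (with $|\varphi_\sigma^{(n)}|\le 1$ and $H_{(1,\dots,d)}(F,1)\in L^1$) on the right. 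The only point worth making explicit is that for each fixed $n$ the function $\varphi_\sigma^{(n)}$ belongs to $C_b^\infty(\R^d)$, so the iterated integration-by-parts formula applies at every step before taking the limit; you have this implicitly, and it causes no difficulty.
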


The following result is similar to \cite[(6.3)]{Dalang2:05}. 
\begin{prop} \label{H}
Fix $\eta, T>0$. Assume {\bf P1} and {\bf P2}. Let $I \times J \subset (0,T] \times \R^k$ be a closed non-trivial rectangle. For any $(s,y), (t,x) \in I \times J$, $s \leq t$, $(s,y) \neq (t,x)$, and $k \geq 0$,
\begin{equation*}
\Vert H_{(1,...,2d)}(Z,1) \Vert_{0,2} \leq C_T
(|t-s|^{\frac{2-\beta}{2}}+\Vert x-y \Vert^{2- \beta})^{-(d+
\eta)/2},
\end{equation*}
where $Z$ is the random vector defined in \textnormal{(\ref{Z})}.
\end{prop}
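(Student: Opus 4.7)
The plan is to unwind the iterated Skorohod integral defining $H_{(1,\ldots,2d)}(Z,1)$ (via Corollary \ref{no}) and control each layer using the $L^p$-continuity of $\delta$ together with the Leibniz rule in Malliavin--Sobolev norms. Set $G_1 := (\gamma_Z^{-1}DZ)^1$ and recursively $G_k := (\gamma_Z^{-1}DZ)^k \, \delta(G_{k-1})$ for $k=2,\ldots,2d$, so that $H_{(1,\ldots,2d)}(Z,1) = \delta(G_{2d})$. Iterating the standard estimates $\|\delta(G)\|_{m,p} \leq c_{m,p}\|G\|_{m+1,p,\mathcal{H}^d_T}$ and the Hölder-type product inequality $\|FG\|_{m,p} \leq c\|F\|_{m,p_1}\|G\|_{m,p_2}$ (with $p^{-1}=p_1^{-1}+p_2^{-1}$) reduces the problem to bounding $\|(\gamma_Z^{-1}DZ)^k\|_{m_k,p_k,\mathcal{H}^d_T}$ for a finite collection of orders $m_k$ and exponents $p_k$.

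To bound each such factor, expand $(\gamma_Z^{-1}DZ)^k = \sum_{j=1}^{2d}(\gamma_Z^{-1})_{k,j}(DZ)^j$ and apply the Leibniz rule, where $(DZ)^j = D(u_j(s,y))$ for $j\leq d$ (with $\|\cdot\|_{m,p,\mathcal{H}^d_T}$-norm bounded uniformly by \eqref{supderivative}) and $(DZ)^j = D(u_{j-d}(t,x)-u_{j-d}(s,y))$ for $j>d$ (controlled by Proposition \ref{derivat}). Write $\Delta := |t-s|^{\frac{2-\beta}{2}} + \Vert x-y\Vert^{2-\beta}$. Theorem \ref{ga}, with a small parameter $\eta'>0$, gives $\|(\gamma_Z^{-1})_{k,j}\|_{m,p} \leq c\,\Delta^{-\eta'}$ on block $\mathbf{(1)}$, $\leq c\,\Delta^{-1/2-\eta'}$ on blocks $\mathbf{(2)}$ and $\mathbf{(3)}$, and $\leq c\,\Delta^{-1-\eta'}$ on block $\mathbf{(4)}$. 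Rewriting Proposition \ref{derivat} in terms of $\Delta$ and choosing $\gamma\in(0,2-\beta)$ sufficiently close to $2-\beta$ yields $\|D(u_{j-d}(t,x)-u_{j-d}(s,y))\|_{m,p,\mathcal{H}^d_T} \leq c\,\Delta^{1/2-\eta'}$. Crucially, the block $\mathbf{(2)}$ and $\mathbf{(4)}$ contributions to $(\gamma_Z^{-1}DZ)^k$ carry the same order of decay as the block $\mathbf{(1)}$ and $\mathbf{(3)}$ contributions, since $\Delta^{-1/2-\eta'}\cdot\Delta^{1/2-\eta'}=\Delta^{-2\eta'}$ matches $\Delta^{-\eta'}\cdot 1$, and $\Delta^{-1-\eta'}\cdot\Delta^{1/2-\eta'}=\Delta^{-1/2-2\eta'}$ matches $\Delta^{-1/2-\eta'}\cdot 1$. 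This produces
\begin{equation*}
\|(\gamma_Z^{-1}DZ)^k\|_{m_k,p_k,\mathcal{H}^d_T} \leq
\begin{cases}
c\,\Delta^{-2\eta'} & \text{if } 1\leq k\leq d,\\
c\,\Delta^{-1/2-2\eta'} & \text{if } d<k\leq 2d.
\end{cases}
\end{equation*}

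Multiplying these $2d$ bounds and choosing $\eta' = \eta/(8d)$ yields
\begin{equation*}
\|H_{(1,\ldots,2d)}(Z,1)\|_{0,2} \leq c\,\Delta^{-d/2 - 4d\eta'} = c\,\bigl(|t-s|^{\frac{2-\beta}{2}} + \Vert x-y\Vert^{2-\beta}\bigr)^{-(d+\eta)/2},
\end{equation*}
which is the required bound.

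The main technical obstacle is the bookkeeping: one must verify that the Sobolev orders $m_k$ and exponents $p_k$ demanded by the $2d$-fold iteration remain in the regime where Theorem \ref{ga}, Proposition \ref{dgamma}, Proposition \ref{derivat}, and \eqref{supderivative} all apply. This is guaranteed because each of these four estimates holds for every $k\geq 0$ and every $p>1$ (with constants depending on $k,p$), and the loss $\eta'$ in Theorem \ref{ga} is independent of $k$ and $p$. A secondary subtlety is coordinating the two small parameters $\eta'$ (from Theorem \ref{ga}) and $2-\beta-\gamma$ (from Proposition \ref{derivat}); these are taken equal and sufficiently small relative to the target $\eta$, so that the combined loss is absorbed into the factor $(d+\eta)/2$ in the final exponent.
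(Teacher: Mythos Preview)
Your proposal is correct and follows essentially the same route as the paper's proof, which simply refers to \cite[(6.3)]{Dalang2:05}: iterate the continuity estimate for $\delta$, apply H\"older's inequality for Malliavin--Sobolev norms to split each factor $(\gamma_Z^{-1}DZ)^k$, and feed in Theorem~\ref{ga}, \eqref{supderivative}, and Proposition~\ref{derivat} with $\gamma$ chosen close enough to $2-\beta$ so that the accumulated losses are absorbed into the exponent $\eta$. Your block-by-block power counting and the choice $\eta'=\eta/(8d)$ make explicit what the paper leaves implicit.
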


\noindent{\sc Proof.} The proof is similar to that of \cite[(6.3)]{Dalang2:05} using the continuity of the Skorohod integral $\delta$ 
(see \cite[Proposition 3.2.1]{Nualart:95} and \cite[(1.11) and p.131]{Nualart:98}) and H\"older's inequality for Malliavin norms 
(see \cite[Proposition 1.10, p.50]{Watanabe:84}); the only change is that $\gamma$ in Proposition \ref{derivat} must be chosen sufficiently close to $2-\beta$. 
\hfill $\Box$
\vskip 16pt

\subsection{Proof of Theorem \ref{t2}(b)}\label{sec53}

Fix $T>0$ and let $I \times J \subset (0,T] \times \R^k$ be a closed non-trivial rectangle. Let $(s,y),(t,x) \in I \times J$, $s \leq t$, $(s,y) \neq (t,x)$, and $z_1,z_2 \in \R^d$. Let $p_Z$ be the density of the random vector $Z$ defined in (\ref{Z}). Then
\begin{equation*}
p_{s,y; \, t,x}(z_1,z_2)=p_Z(z_1, z_2-z_1).
\end{equation*}
Apply Corollary \ref{no} with
$\sigma=\{ i \in \{1,...,d\} : z_2^i-z_1^i \geq 0 \}$ and H\"older's
inequality to see that
\begin{equation}\label{rd**1}
p_Z(z_1, z_1-z_2) \leq \prod_{i=1}^d \biggl( \P
\biggl\{|u_i(t,x)-u_i(s,y) | > |z_1^i -z_2^i | \biggr\}
\biggr)^{\frac{1}{2d}} \times \Vert H_{(1,...,2d)}(Z,1) \Vert_{0,2}.
\end{equation}
  
   When $\Vert z_1 - z_2 \Vert = 0$,
$$
   \frac{|t-s|^{\gamma/2}+\Vert x-y \Vert^{\gamma}}{\Vert z_1 - z_2 \Vert} \wedge 1 = 1,
$$
since the numerator is positive because $(s,y) \neq (t,x)$. Therefore, \eqref{eq1.9} follows from Proposition \ref{H} in this case. 

   Assume now that $\Vert z_1 - z_2 \Vert \neq 0$. Then there is $i \in \{1,\dots,d\}$, and we may as well assume that $i=1$, such that $0 < \vert z^1_1 - z^1_2\vert = \max_{i=1,\dots,d} \vert z^i_1 - z^i_2\vert$. Then
$$
   \prod_{i=1}^d \left( \P
\left\{|u_i(t,x)-u_i(s,y)| > |z_1^i-z_2^i|
\right\}\right)^{\frac{1}{2d}} \leq \left(\P
\left\{|u_1(t,x)-u_1(s,y)| > |z_1^1-z_2^1|
\right\} \right)^{\frac{1}{2d}}.
$$
Using Chebyshev's inequality and (\ref{equa3}), we see that this is bounded above by
\begin{equation}\label{rd**2}
  c\left[\frac{|t-s|^{\gamma/2}+\Vert x-y \Vert^{\gamma}}{\vert z_1^1-z_2^1
\vert^2}\, \wedge 1 \right]^{\frac{p}{2d}}  \leq \tilde c\left[\frac{|t-s|^{\gamma/2}+\Vert x-y \Vert^{\gamma}}{\Vert z_1-z_2
\Vert^2}\, \wedge 1 \right]^{\frac{p}{2d}}.
\end{equation}
The two inequalities \eqref{rd**1} and \eqref{rd**2}, together with Proposition \ref{H}, prove Theorem \ref{t2}(b).
\hfill $\Box$
\vskip 12pt

As mentioned in Remark \ref{remexp}, in the case where $b \equiv 0$, one can establish the following exponential upper bound.

\begin{lem} \label{exp} 
Let $\tilde{u}$ the solution of \textnormal{(\ref{equa1})} with $b \equiv 0$.
Fix $T>0$ and $\gamma \in (0, 2-\beta)$. Assume {\bf P1}. Let $I \times J \subset (0,T] \times \R^k$ be a closed non-trivial rectangle. 
Then there exist constants $c, c_T>0$ such that for any $(s,y), (t,x) \in I \times J$, $s \leq t$, $(s,y) \neq (t,x)$, $z_1,z_2 \in \R^d$,
\begin{equation*}
\prod_{i=1}^d \biggl( \P
\biggl\{|\tilde{u}_i(t,x)-\tilde{u}_i(s,y)| > |z_1^i-z_2^i|
\biggr\}\biggr)^{\frac{1}{2d}} \leq c\exp \biggl( -\frac{\Vert
z_1-z_2 \Vert^2}{c_T (|t-s|^{\gamma/2}+\Vert x-y \Vert^{\gamma})}\biggr).
\end{equation*}
\end{lem}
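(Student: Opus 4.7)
The plan is to exploit the absence of drift ($b\equiv 0$) to represent the scalar increment $V_i := \tilde u_i(t,x)-\tilde u_i(s,y)$ as a single Walsh stochastic integral and then derive a sub-Gaussian tail bound via the exponential supermartingale inequality. From \eqref{equa2} with $b\equiv 0$,
$$V_i \;=\; \sum_{j=1}^d \int_0^t\!\int_{\R^k} g(r,z)\,\sigma_{ij}(\tilde u(r,z))\,M^j(dr,dz),$$
where $g(r,z):=S(t-r,x-z)\mathbf{1}_{r\leq t}-S(s-r,y-z)\mathbf{1}_{r\leq s}$. Viewed as a process in $r\in[0,t]$, this is a continuous $L^2$-martingale $N_r$ with $N_t=V_i$ and quadratic variation $\langle N\rangle_t=\sum_j\int_0^t\|g(v,\cdot)\,\sigma_{ij}(\tilde u(v,\cdot))\|_{\cH}^2\,dv$.

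The next step is to show that $\langle N\rangle_t$ admits an almost-sure deterministic bound
$$\langle N\rangle_t\;\leq\;C_T\bigl(|t-s|^{(2-\beta)/2}+\|x-y\|^{2-\beta}\bigr)\;=:\;C_T\,A_{2-\beta},$$
leveraging boundedness of the $\sigma_{ij}$ from hypothesis \textbf{P1}. Granting this, the standard exponential-supermartingale argument (i.e. $\E[\exp(\lambda N_r-\tfrac{\lambda^2}{2}\langle N\rangle_r)]\leq 1$ applied to $\pm N_t$, Chebyshev, and optimization over $\lambda>0$) yields the Gaussian tail
$$\P\{|V_i|>a\}\;\leq\;2\exp\!\Bigl(-\frac{a^2}{2\,C_T\,A_{2-\beta}}\Bigr).$$
Because $|t-s|\leq|I|$ and $\|x-y\|\leq|J|$ on $I\times J$, the exponents $(2-\beta)/2$ and $2-\beta$ can be weakened to $\gamma/2$ and $\gamma$ respectively for any $\gamma\in(0,2-\beta)$ at the price of enlarging the constant, producing the denominator $A_\gamma := |t-s|^{\gamma/2}+\|x-y\|^\gamma$ in the exponent.

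To conclude, I raise each tail to the $1/(2d)$-th power and multiply; using $\sum_{i=1}^d|z_1^i-z_2^i|^2=\|z_1-z_2\|^2$, the individual exponentials combine into a single factor $\exp\bigl(-\|z_1-z_2\|^2/(4dC_T A_\gamma)\bigr)$, which (with $c=\sqrt 2$ and $c_T=4dC_T$) is the claimed bound.

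The hard part will be the deterministic bound on $\langle N\rangle_t$. In the spatial-dimension-one white-noise framework of \cite{Dalang:05}, where $\cH=L^2(\R)$, it is immediate: since $|\sigma_{ij}|\leq K$ pointwise, one has $\|g\,\sigma_{ij}(\tilde u)\|_\cH^2\leq K^2\|g\|_\cH^2$ a.s. In our setting with spatially homogeneous Riesz noise the inner product of $\cH$ is non-local, so the naive a.s.\ majorisation $\|g\,\sigma(\tilde u)\|_\cH^2\leq K^2\||g|\|_\cH^2$ destroys the cancellation between $S(t-r,x-\cdot)$ and $S(s-r,y-\cdot)$ in $g$ and produces a $t^{(2-\beta)/2}$ scaling rather than the required $|t-s|^{(2-\beta)/2}+\|x-y\|^{2-\beta}$ scaling. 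The refined argument I have in mind combines the Fourier representation of $\|\cdot\|_\cH$ with the explicit bound on $\int_0^t\|g(v,\cdot)\|_\cH^2\,dv$ already established in the proof of Proposition \ref{holderoptimal}, pushed pointwise in $\omega$ through the boundedness of $\sigma$, so as to preserve the correct scaling of $A_{2-\beta}$.
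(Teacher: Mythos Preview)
Your overall strategy---represent $V_i=\tilde u_i(t,x)-\tilde u_i(s,y)$ as a continuous martingale, bound its bracket deterministically, and apply the exponential martingale inequality---is exactly the paper's approach, and your combination of the $d$ individual tail bounds into a single Gaussian factor is correct.

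The gap is precisely where you flag it: the a.s.\ bound on $\langle N\rangle_t$. Your proposed resolution is not a proof. You correctly observe that the naive physical-space bound $\|g\,\sigma\|_\cH^2\leq K^2\,\||g|\|_\cH^2$ destroys the cancellation in $g$ and yields the wrong scaling. But the alternative you sketch---using the Fourier bound on $\int_0^t\|g(v,\cdot)\|_\cH^2\,dv$ ``pushed pointwise through the boundedness of $\sigma$''---is not a valid step: the pointwise bound $|\sigma|\leq K$ does \emph{not} imply $\|g\,\sigma\|_\cH^2\leq K^2\|g\|_\cH^2$ for the non-local Riesz inner product, since multiplication by a bounded (but merely H\"older) function is not a contraction on the Sobolev-type space $\cH$. (Incidentally, Proposition~\ref{holderoptimal} gives a \emph{lower} bound in the Gaussian case; the upper bound on $\int_0^t\|g\|_\cH^2\,dv$ comes from \cite{Sanz:00,Sanz:02}.)

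The paper closes this gap via the \emph{factorisation method}: one writes
\[
S(t-r,x-z)=c_\alpha\int_r^t d\theta\int_{\R^k} d\eta\,\psi_\alpha(t-\theta,x-\eta)\,S(\theta-r,\eta-z)\,(\theta-r)^{-\alpha},
\]
with $\psi_\alpha(t,x)=S(t,x)\,t^{\alpha-1}$. This transfers the $(s,y;t,x)$-increment into the kernel $\psi_\alpha$, which lives in the $(\theta,\eta)$ variables, while the $z$-dependent factor $S(\theta-r,\eta-z)$ is nonnegative on its own. Now the crude bound $|\sigma_{ij}(\tilde u(r,z))|\leq K$ \emph{is} harmless, because the $\cH$-norm acts only in $z$ and there is no longer any cancellation in $z$ to preserve. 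One then applies H\"older/Minkowski to separate an $L^1$-integral of $|\psi_\alpha(t-\theta,x-\eta)-\psi_\alpha(s-\theta,y-\eta)|$---bounded via Lemma~\ref{lemSS}, yielding the correct $\Vert x-y\Vert^{\gamma}$ and $|t-s|^{\gamma/2}$ scalings for any $\gamma<4\alpha<2-\beta$---from the uniformly bounded factor $\nu_{r,z}$ of \eqref{nu}--\eqref{eqau}. This gives $\langle M^i\rangle_t\leq c_T(|t-s|^{\gamma/2}+\Vert x-y\Vert^{\gamma})$ directly; the sharp exponent $2-\beta$ you claim as an intermediate step is not obtained (and not needed).
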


\begin{proof} 
Consider the continuous one-parameter martingale $(M_a=(M_a^1,...,M_a^d), \, 0 \leq a \leq t)$ defined by
\begin{equation*}
M^i_a=
\begin{cases}
&\int_0^a \int_{\R^k} (S(t-r,x-v)-S(s-r,y-v)) \sum_{j=1}^d \sigma_{ij}(\tilde{u}(r,v)) \, M^j(dr, dv) \\
& \qquad \qquad \qquad \qquad \qquad \qquad \qquad \qquad\qquad \qquad\qquad \qquad\text{if $0 \leq a \leq s $}, \\
& \\
&\int_0^s \int_{\R^k}
(S(t-r,x-v)-S(s-r,y-v)) \sum_{j=1}^d \sigma_{ij}(\tilde{u}(r,v)) \, M^j(dr, dv) \\
&\qquad \qquad + \int_s^a \int_{\R^k} S(t-r,x-v) \sum_{j=1}^d
\sigma_{ij}(\tilde{u}(r,v)) \, M^j(dr, dv) \\
& \qquad \qquad \qquad \qquad\qquad \qquad \qquad \qquad\qquad
\qquad\qquad \qquad\text{if $s \leq a \leq t$},
\end{cases}
\end{equation*}
for all $i=1,...,d$, with respect to the filtration
($\mathcal{F}_a$, $0 \leq a \leq t$). Notice that
\begin{equation*}
M^i_0=0, \; \; M^i_t=\tilde{u}^i(t,x)-\tilde{u}^i(s,y).
\end{equation*}
Moreover, because the $M^i$ are independent and white in time, $\langle M^i \rangle_t =\mathcal{M}^i_1+\mathcal{M}^i_2$, where
\begin{equation*} \begin{split}
\mathcal{M}^i_1&=\sum_{j=1}^d \int_0^s dr \, \bigg\Vert (S(t-r,x-\cdot)-S(s-r,y-\cdot)) \sigma_{ij}(\tilde{u}(r,\cdot)) \bigg\Vert^2_{\mathcal{H}}, \\
\mathcal{M}^i_2&= \sum_{j=1}^d \int_s^t dr \, \bigg\Vert S(t-r,x-\cdot) \sigma_{ij}(\tilde{u}(r,\cdot)) \bigg\Vert^2_{\mathcal{H}}.
\end{split}
\end{equation*}
Using the fact that the coefficients of $\sigma$ are bounded and Lemma \ref{scalar}, we get that
\begin{equation*}
\mathcal{M}^i_2 \leq c \vert t-s \vert^{\frac{2-\beta}{2}}.
\end{equation*}
On the other hand, we write $\mathcal{M}^i_1 \leq 2(\mathcal{M}^i_{1,1}+\mathcal{M}^i_{1,2})$, where
\begin{equation*} \begin{split}
\mathcal{M}^i_{1,1}&=\sum_{j=1}^d \int_0^s dr \, \bigg\Vert (S(t-r,x-\cdot)-S(t-r,y-\cdot)) \sigma_{ij}(\tilde{u}(r,\cdot)) \bigg\Vert^2_{\mathcal{H}}, \\
\mathcal{M}^i_{1,2}&= \sum_{j=1}^d \int_0^s dr \, \bigg\Vert (S(t-r,y-\cdot)-S(s-r,y-\cdot)) \sigma_{ij}(\tilde{u}(r,\cdot)) \bigg\Vert^2_{\mathcal{H}}.
\end{split}
\end{equation*}
In order to bound these two terms, we will use the factorisation method. Using the semigroup property of $S$ and the Beta function, it yields that, for any $\alpha \in (0,1)$,
\begin{equation*} \begin{split}
S(t-r,x-z)= \frac{\sin(\pi \alpha)}{\pi} \int_{r}^t d\theta \int_{\R^k} d\eta \,
\psi_{\alpha}(t-\theta,x-\eta) S(\theta-r, \eta -z) (\theta-r)^{-\alpha},
\end{split}
\end{equation*}
where $\psi_{\alpha}(t,x)=S(t,x)t^{\alpha-1}$.
Hence, using the boundedness of the coefficients of $\sigma$, we can write
\begin{equation*} \begin{split}
\mathcal{M}^i_{1,1}&\leq c \sum_{j=1}^d \int_0^s dr \, \bigg\Vert \int_{r}^t d\theta \int_{\R^k} d\eta \, \vert \psi_{\alpha}(t-\theta,x-\eta)-\psi_{\alpha}(t-\theta,y-\eta) \vert \\
&\qquad \qquad \qquad \times S(\theta-r, \eta -\cdot) 
(\theta-r)^{-\alpha} \bigg\Vert^2_{\mathcal{H}},
\end{split}
\end{equation*}
and ${M}^i_{1,2} \leq c ({M}^i_{1,2,1}+{M}^i_{1,2,2})$, where
\begin{equation*} \begin{split}
\mathcal{M}^i_{1,2,1}& = \sum_{j=1}^d \int_0^s dr \bigg\Vert \int_{r}^s d\theta \int_{\R^k} d\eta \, \vert \psi_{\alpha}(t-\theta,y-\eta)-\psi_{\alpha}(s-\theta,y-\eta) \vert \\
& \qquad \qquad \qquad \times  S(\theta-r, \eta-\cdot) (\theta-r)^{-\alpha} \bigg\Vert^2_{\mathcal{H}}, \\
\mathcal{M}^i_{1,2,2}&= \sum_{j=1}^d \int_0^s dr \bigg\Vert \int_{s}^t d\theta \int_{\R^k} d\eta \, \psi_{\alpha}(t-\theta,y-\eta) S(\theta-r, \eta -\cdot) (\theta-r)^{-\alpha} \bigg\Vert^2_{\mathcal{H}},
\end{split}
\end{equation*}
Using H\"older's inequality, \eqref{nu}, (\ref{eqau}) and Lemma \ref{lemSS}, we get that for any $\alpha \in (0, \frac{2-\beta}{4})$ and $\gamma \in (0, 4 \alpha)$,
\begin{equation*} \begin{split}
\mathcal{M}^i_{1,1}&\leq c \sup_{(r,z) \in [0,T] \times \R^k} \Vert S(r-\ast, z -\cdot) (r-\ast)^{-\alpha} \Vert_{\mathcal{H}^d_r}^2 \\
& \qquad \qquad \times \biggl( \int_{0}^t dr \int_{\R^k} dz \, \vert \psi_{\alpha}(t-r,x-z)-\psi_{\alpha}(t-r,y-z) \vert \biggr)^2 \\
&\leq c_T(\alpha) \Vert x-y \Vert^{\gamma}, \\
\mathcal{M}^i_{1,2,1}&\leq c \sup_{(r,z) \in [0,T] \times \R^k} \Vert S(r-\ast, z -\cdot) (r-\ast)^{-\alpha} \Vert_{\mathcal{H}^d_r}^2 \\
& \qquad \qquad \times \biggl( \int_{0}^t dr \int_{\R^k} dz \, \vert \psi_{\alpha}(t-r,y-z)-\psi_{\alpha}(s-r,y-z) \vert \biggr)^2 \\
&\leq c_T(\alpha) \Vert t-s \Vert^{\gamma/2}, \\
\mathcal{M}^i_{1,2,2}&\leq c \sup_{(r,z) \in [0,T] \times \R^k} \Vert S(r-\ast, z -\cdot) (r-\ast)^{-\alpha} \Vert_{\mathcal{H}^d_r}^2
\biggl( \int_{s}^t dr \int_{\R^k} dz \, \psi_{\alpha}(t-r,y-z) \biggr)^2 \\
&\leq c_T(\alpha) \Vert t-s \Vert^{\gamma/2}.
\end{split}
\end{equation*}
Thus, we have proved that for any $\gamma \in (0, 2-\beta)$,
\begin{equation*}
\langle M^i \rangle_t \leq c_T (|t-s|^{\gamma/2}+\Vert x-y \Vert^{\gamma}).
\end{equation*}
By the exponential martingale inequality \cite[A.5]{Nualart:95},
\begin{equation*}
\P \biggl\{|\tilde{u}^i(t,x)-\tilde{u}^i(s,y) | > |z_1^i
-z_2^i | \biggr\} \leq 2 \exp \biggl( -\frac{| z^i_1-z^i_2 |^2}{c_T
(|t-s|^{\gamma/2}+\Vert x-y \Vert^{\gamma})}\biggr),
\end{equation*}
which implies the desired result.
\end{proof} 

\vskip 16pt

\appendix

\section{Appendix}

\begin{lem} \label{scalar}
There is $C >0$ such that for any $0 <\epsilon \leq s \leq t$ and $x \in \R^k$,
\begin{equation*}
\int_{s-\epsilon}^s dr \int_{\R^k} d \xi \, \Vert \xi \Vert^{\beta-k}\, \vert \mathcal{F} S(t-r, x-\cdot) (\xi) \vert^2
= C ((t-s+\epsilon)^{\frac{2-\beta}{2}}-(t-s)^{\frac{2-\beta}{2}}).
\end{equation*}
Moreover, there exists $\tilde{C}>0$ such that the above integral is bounded above by $\tilde{C} \epsilon^{\frac{2-\beta}{2}}$,
and if $t-s \leq \epsilon$, then there exists $\bar{C}>0$ such that the above integral is bounded below by $\bar{C} \epsilon^{\frac{2-\beta}{2}}$.
\end{lem}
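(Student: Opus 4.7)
The plan is to evaluate the integral in closed form and then read off the upper and lower bounds from elementary properties of the power function $x \mapsto x^{\alpha}$ with $\alpha := (2-\beta)/2 \in (0,1)$.

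First I would use \eqref{ft} together with the fact that shifting by $x$ only introduces a phase factor to get $\vert \mathcal{F} S(t-r, x-\cdot)(\xi)\vert^2 = \exp(-4\pi^2(t-r)\Vert\xi\Vert^2)$. Substituting $u = t-r$, the integral becomes
$$
\int_{t-s}^{t-s+\epsilon} du \int_{\R^k} d\xi\, \Vert \xi \Vert^{\beta-k} \exp(-4\pi^2 u \Vert \xi \Vert^2).
$$
Next, in the inner integral I would scale $\tilde{\xi} = \sqrt{u}\,\xi$, which factors out the $u$-dependence:
$$
\int_{\R^k} \Vert \xi \Vert^{\beta-k} \exp(-4\pi^2 u \Vert \xi \Vert^2)\, d\xi = u^{-\beta/2}\, C',
$$
where $C' := \int_{\R^k} \Vert \tilde{\xi} \Vert^{\beta-k} \exp(-4\pi^2 \Vert \tilde{\xi} \Vert^2)\, d\tilde{\xi}$ is finite: in polar coordinates the integrand is $\tilde{\rho}^{\beta-1} \exp(-4\pi^2\tilde{\rho}^2)$, which is integrable at $0$ because $\beta > 0$ and at $\infty$ thanks to the Gaussian. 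Integrating $u^{-\beta/2}$ from $t-s$ to $t-s+\epsilon$ then yields the claimed identity with $C = 2C'/(2-\beta)$.

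For the upper bound, since $\alpha \in (0,1)$, the function $x\mapsto x^{\alpha}$ is concave and hence subadditive on $[0,\infty)$, so $(t-s+\epsilon)^{\alpha} \leq (t-s)^{\alpha} + \epsilon^{\alpha}$, which gives $(t-s+\epsilon)^{\alpha} - (t-s)^{\alpha} \leq \epsilon^{\alpha}$ and therefore the upper bound with $\tilde{C} = C$.

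For the lower bound when $t-s \leq \epsilon$, I would observe that the function $a \mapsto (a+\epsilon)^{\alpha} - a^{\alpha}$ has derivative $\alpha[(a+\epsilon)^{\alpha-1} - a^{\alpha-1}] \leq 0$ since $\alpha-1 < 0$, so it is nonincreasing on $[0,\infty)$. Evaluating at the right endpoint $a = \epsilon$ of the allowed range gives
$$
(t-s+\epsilon)^{\alpha} - (t-s)^{\alpha} \geq (2\epsilon)^{\alpha} - \epsilon^{\alpha} = (2^{\alpha}-1)\,\epsilon^{\alpha},
$$
so $\bar{C} = C(2^{(2-\beta)/2}-1)$ works. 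There is no substantial obstacle; the only nontrivial fact is the finiteness of $C'$, which just uses $0 < \beta < k$, and the rest is a routine manipulation of powers.
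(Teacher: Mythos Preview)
Your proof is correct and follows essentially the same route as the paper: compute the Fourier transform, change variables $u=t-r$ and then scale $\tilde\xi=\sqrt{u}\,\xi$ to reduce to $\int_{t-s}^{t-s+\epsilon} u^{-\beta/2}\,du$, and read off the closed form. The only cosmetic difference is in extracting the bounds: the paper bounds the integrand $r^{-\beta/2}$ above and below on the interval of integration (splitting into the cases $\epsilon<t-s$ and $\epsilon\geq t-s$ for the upper bound), whereas you work directly with the closed form via concavity/subadditivity of $x\mapsto x^{\alpha}$ and monotonicity of $a\mapsto (a+\epsilon)^{\alpha}-a^{\alpha}$; both are equally elementary.
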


\begin{proof}
Using (\ref{ft}) and changing variables $[\tilde{r}=t-r,\
\tilde{\xi}=\xi {\sqrt{r}}]$ yields
\begin{equation*} \begin{split}
&\int^{s}_{s-\e} dr \int_{\R^k} d \xi \, \Vert \xi \Vert^{\beta-k}\, \vert \mathcal{F} S(t-r, x-\cdot) (\xi) \vert^2 \\
& \qquad \qquad = \int^{t-s+\e}_{t-s} dr \, r^{-\beta/2}
\int_{\R^k} d\xi \, \Vert \xi \Vert ^{\beta - k}\, e^{-\Vert \xi \Vert^2} \\
& \qquad \qquad = C \int^{t-s+\e}_{t-s} dr \, r^{-\beta/2} \\
& \qquad \qquad = C ((t-s+\epsilon)^{\frac{2-\beta}{2}}-(t-s)^{\frac{2-\beta}{2}}).
\end{split}
\end{equation*}
If $\e<t-s$, then the last integral is bounded above by $C (t-s)^{-\beta/2}\, \e \leq C \e^{\frac{2-\beta}{2}}$. On the other hand, if $t-s\leq \e$, then the last integral is bounded above by
$$
\int^{2\e}_{0} dr \, r^{-\beta/2} \leq C \e^{\frac{2-\beta}{2}}.
$$
Finally, if $t-s\leq \e$, then
$$
   \int^{t-s+\e}_{t-s} dr \, r^{-\beta/2} \geq \e (t-s+\e)^{-\beta/2} \geq \e (2\e)^{-\beta/2} = c \e^{\frac{2-\beta}{2}}.
$$
\end{proof}

\begin{lem} \label{lem:7}
Assume ${\bf P1}$.
For all $T>0$ and $q\ge1$, there exists a constant
$c=c(q,T)\in(0,\infty)$ such that for every
$0<\e\le s\le t\le T$, $x\in \R^k$, and $a>0$,
\begin{equation*}
W:=\E\left[ \sup_{\xi\in\R^d: \Vert\xi\Vert \leq a }\biggl(\int_{s-\e}^s
dr \, \sum_{l=1}^d \bigg\Vert \sum_{i=1}^d
a_i(l,r,t,x) \xi_i \bigg\Vert_{\mathcal{H}}^{2} \biggr)^q\right]
\le c\, a^{2q}\, (t-s+\e)^{\frac{2-\beta}{2}q} \epsilon^{\frac{2-\beta}{2} q },
\end{equation*}
where $a_i(l,r,t,x)$ is defined in \textnormal{(\ref{eq4.1})}.
\end{lem}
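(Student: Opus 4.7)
The first step is a reduction via Cauchy-Schwarz in $\R^d$: since $\|\sum_{i=1}^d a_i(l,r,t,x)\xi_i\|_\mathcal{H}^2 \le \|\xi\|^2 \sum_{i=1}^d \|a_i(l,r,t,x)\|_\mathcal{H}^2$, one can pull the supremum $\sup_{\|\xi\|\le a}$ outside and obtain
\begin{equation*}
W \le a^{2q}\,\E\biggl[\biggl(\int_{s-\e}^s dr \sum_{l=1}^d\sum_{i=1}^d \|a_i(l,r,t,x)\|_\mathcal{H}^2\biggr)^q\biggr].
\end{equation*}
I would then split $a_i = a_i^{(1)}+a_i^{(2)}$ according to the decomposition \eqref{eq4.1} into the stochastic-integral part and the pathwise drift, and, using $(A+B)^q\le 2^{q-1}(A^q+B^q)$, estimate each contribution separately.

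For the stochastic part $a_i^{(1)}$, the strategy is to follow the proof of Proposition~\ref{derivat}. Fix $\alpha\in(0,(2-\beta)/4)$ and invoke the factorization identity (via the semigroup property of $S$, the Beta function and a stochastic Fubini) to write
\begin{equation*}
a_i^{(1)}(l,r,t,x) = \frac{\sin(\pi\alpha)}{\pi}\int_r^t dr'\int_{\R^k} dz'\,\psi_\alpha(t-r',x-z')\,Y^{(l,r,i)}_\alpha(r',z'),
\end{equation*}
where $\psi_\alpha(t,x):=S(t,x)t^{\alpha-1}$ and $Y^{(l,r,i)}_\alpha(r',z')$ is the $\mathcal{H}$-valued stochastic integral on $[r,r']\times\R^k$ with integrand $(r'-\theta)^{-\alpha}S(r'-\theta,z'-\eta)\sum_{j=1}^d D^{(l)}_r(\sigma_{ij}(u(\theta,\eta)))$. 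Minkowski's inequality in $\mathcal{H}$ reduces the task to bounding the $L^{2q}(\Omega)$-moments of $\|Y^{(l,r,i)}_\alpha(r',z')\|_\mathcal{H}$. Combining the Hilbert-space Burkholder-Davis-Gundy inequality, the chain rule $D^{(l)}_r(\sigma_{ij}(u))=\sum_{k=1}^d\partial_k\sigma_{ij}(u)\,D^{(l)}_r(u_k)$, the boundedness of $\partial\sigma$ from \textbf{P1}, the scaling identity \eqref{eqau}, and the moment bound \eqref{supderivative} \emph{applied after the $r$-integration} (via the identity $\sum_{l=1}^d\int_0^T\|D^{(l)}_r(u_k(\theta,\eta))\|_\mathcal{H}^2\,dr=\|D(u_k(\theta,\eta))\|_{\mathcal{H}^d_T}^2$) should yield adequate control. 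The drift $a_i^{(2)}$ is analogous but simpler: Minkowski's inequality in $\mathcal{H}$, boundedness of $\partial b$, and $\int_{\R^k} dy\,S(t-\theta,x-y)=1$ suffice. Jensen's inequality over $r\in[s-\e,s]$ together with Lemma~\ref{scalar}-type evaluations of the resulting scalar integrals then produces the claimed bound.

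The main obstacle is obtaining the product $(t-s+\e)^{(2-\beta)q/2}\e^{(2-\beta)q/2}$ of two small factors simultaneously. Replacing $\|D_r u\|$-type terms by the full $\|Du\|_{\mathcal{H}^d_T}$-norm from \eqref{supderivative} loses all $\e$-dependence, whereas pointwise-in-$r$ moment bounds on $\|D^{(l)}_r(u_k(\theta,\eta))\|_\mathcal{H}$ are unavailable, since the leading contribution $\sigma_{kl}(u(r,\cdot))S(\theta-r,\eta-\cdot)$ to this derivative has $\mathcal{H}$-norm of order $(\theta-r)^{-\beta/4}$ and is therefore singular as $r\to\theta$. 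The factorization device above shifts this singularity into the integrable kernel $(r'-\theta)^{-\alpha}$ with $\alpha<(2-\beta)/4$; the freedom to choose $\alpha$ close to the critical value $(2-\beta)/4$ is what balances the $\e$- and $(t-s+\e)$-dependent factors to give precisely the claimed exponents.
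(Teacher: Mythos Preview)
Your initial reduction (Cauchy--Schwarz in $\R^d$ to extract the factor $a^{2q}$, then split into stochastic and drift parts) matches the paper. The gap is in how you propose to obtain the factor $\e^{(2-\beta)q/2}$.

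You correctly identify the obstacle: using the full norm $\|Du\|_{\mathcal{H}^d_T}$ from \eqref{supderivative} loses all $\e$-dependence, while pointwise moments $\E[\|D^{(l)}_r u_k(\theta,\eta)\|^{2q}_\mathcal{H}]$ blow up like $(\theta-r)^{-\beta q/2}$. But factorization does \emph{not} resolve this. The factorization singularity $(r'-\theta)^{-\alpha}$ sits at the \emph{upper} endpoint $\theta=r'$ of the inner stochastic integral, whereas the Malliavin-derivative singularity $(\theta-r)^{-\beta/4}$ sits at the \emph{lower} endpoint $\theta=r$. They do not interact; after factorization you still face an integral of the type $\int_r^{r'}(r'-\theta)^{-2\alpha-\beta/2}(\theta-r)^{-\beta q/2}\,d\theta$, which diverges for $q\ge 2/\beta$ regardless of how $\alpha$ is chosen. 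So the claim that ``choosing $\alpha$ close to $(2-\beta)/4$ balances the $\e$- and $(t-s+\e)$-factors'' is not correct.

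The paper's argument avoids pointwise-in-$r$ bounds altogether. Since $D^{(l)}_r(\sigma_{ij}(u(\theta,\eta)))=0$ for $\theta<r$, one may extend the inner integral to $[s-\e,t]$ and regard $r\mapsto a_i(l,r,t,x)$ as a stochastic integral taking values in the Hilbert space $\mathcal{H}^d_{s-\e,s}=L^2([s-\e,s];\mathcal{H}^d)$. A Hilbert-space Burkholder inequality (specifically \cite[(6.8), Theorem 6.1]{Sanz:05}) then reduces matters to bounding $\sup_\eta\E\bigl[\|D u(\rho,\eta)\|^{2q}_{\mathcal{H}^d_{s-\e,s}}\bigr]$. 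The key input you are missing is \cite[Lemma 8.2]{Sanz:05}, which gives
\[
\sup_\eta \E\bigl[\|D u(\rho,\eta)\|^{2q}_{\mathcal{H}^d_{s-\e,s}}\bigr]\le C\Bigl(\int_{s-\e}^{s\wedge\rho}\!\!dr\int_{\R^k}\mu(d\xi)\,|\mathcal{F}S(\rho-r)(\xi)|^2\Bigr)^q\le C\,\e^{\frac{2-\beta}{2}q}.
\]
This is exactly the integration of the $(\theta-r)^{-\beta/2}$ singularity over $r\in[s-\e,s]$, performed \emph{before} taking $q$th moments, and it is what produces the $\e$-factor. The remaining outer integral $\int_{s-\e}^t d\rho\,(t-\rho)^{-\beta/2}$ supplies the factor $(t-s+\e)^{(2-\beta)/2}$. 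The drift term is handled analogously. No factorization is used in this lemma.
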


\begin{proof}
Use (\ref{eq4.1}) and the Cauchy-Schwarz inequality to get
\begin{equation}\label{eqrdA.0}
W \leq c \, a^{2q} \biggl( \E \, \biggl[ \biggl( \int_{s-\epsilon}^s dr \,
\Vert W_1 \Vert_{\mathcal{H}^d}^2 \biggr)^q \biggr]+\E \, \biggl[ \biggl( \int_{s-\epsilon}^s dr \,
\Vert W_2 \Vert_{\mathcal{H}^d}^2 \biggr)^q \biggr] \biggr),
\end{equation}
where
\begin{align*}
W_{1}&=\sum_{i,j=1}^d \int_r^t \int_{\R^k} S(t-\theta,x-\eta) D_{r} (\sigma_{i,j}(u(\theta,\eta))) \, M^j(d\theta, d \eta), \\
W_{2}&=\sum_{i=1}^d \int_r^t d\theta \int_{\R^k} d\eta \, S(t-\theta,x-\eta) D_{r} (b_i(u(\theta,\eta))).
\end{align*}
Then
\begin{equation*}
\E \, \biggl[ \biggl( \int_{s-\epsilon}^s dr \,
\Vert W_1 \Vert_{\mathcal{H}^d}^2 \biggr)^q \biggr]=\E \left[ \Vert W_1 \Vert^{2q}_{L^2([s-\epsilon, s],\mathcal{H}^d)}\right].
\end{equation*}
We then apply \cite[(6.8) in Theorem 6.1]{Sanz:05} (see also \cite[(3.13)]{Nualart:07}) to see that this is
\begin{equation} \label{sanz1}
\begin{split}
&\leq \left( \int_{s-\epsilon}^t dr \int_{\R^k} \mu(d \xi)\, \vert \mathcal{F} S(r) (\xi) \vert^2 \right)^{q-1} \\
& \qquad \times \int_{s-\epsilon}^t d\rho \int_{\R^k} \mu(d \xi)\, \vert \mathcal{F} S(t-\rho) (\xi) \vert^2
\sup_{\eta \in \R^k} \E \left[ \Vert D_{\cdot, \ast} u(\rho, \eta) \Vert^{2q}_{L^2([s-\epsilon, s],\mathcal{H}^d)}\right].
\end{split}
\end{equation}
According to \cite[Lemma 8.2]{Sanz:05},
\begin{equation*}
\sup_{\eta \in \R^k} \E \left[ \Vert D_{\cdot, \ast} u(\rho, \eta) \Vert^{2q}_{L^2([s-\epsilon, s],\mathcal{H}^d)}\right]
\leq C \left( \int_{s-\epsilon}^{s \wedge \rho} dr \int_{\R^k} \mu(d \xi) \vert \mathcal{F} S(\rho-r) (\xi) \vert^2 \right)^q,
\end{equation*}
and we have
\begin{equation}\label{rdnA.3}
\int_{\R^k} \mu(d \xi)\, \vert \mathcal{F} S(r) (\xi) \vert^2 =\int_{\R^k} \frac{d \xi}{\Vert \xi \Vert^{k-\beta}} e^{-r \Vert \xi \Vert^2}=r^{-\frac{\beta}{2}} \int_{\R^k} \frac{d v}{\Vert v \Vert^{k-\beta}} e^{- \Vert v \Vert^2}=c_0 r^{-\frac{\beta}{2}}.
\end{equation}
For $\rho \leq s$,
\begin{equation*}
\int_{s-\epsilon}^{s \wedge \rho} dr \int_{\R^k} \mu(d \xi)\, \vert \mathcal{F} S(\rho-r) (\xi) \vert^2 = c_0 \int_0^{\rho-s+\epsilon} dr \, r^{-\frac{\beta}{2}}= c (\rho-s+\epsilon)^{\frac{2-\beta}{2}} \leq c \epsilon^{\frac{2-\beta}{2}},
\end{equation*}
and for $s \leq \rho$,
\begin{equation*}
\begin{split}
\int_{s-\epsilon}^{s \wedge \rho} dr \int_{\R^k} \mu(d \xi)\, \vert \mathcal{F} S(\rho-r) (\xi) \vert^2 &= c_0 \int_{\rho-s}^{\rho-s+\epsilon} dr \, r^{-\frac{\beta}{2}}= c \left((\rho-s+\epsilon)^{\frac{2-\beta}{2}}- (\rho-s)^{\frac{2-\beta}{2}}\right) \\
& = c \epsilon \int_0^1 (\rho-s+\epsilon \nu)^{-\frac{\beta}{2}} d \nu \leq c \epsilon \int_0^1 (\epsilon \nu)^{-\frac{\beta}{2}}d \nu \\
& =c\epsilon^{\frac{2-\beta}{2}}.
\end{split}
\end{equation*}
Therefore, from (\ref{sanz1}) and \eqref{rdnA.3} above,
\begin{equation} \label{rdA.3}
\E \left[ \Vert W_1 \Vert^{2q}_{L^2([s-\epsilon, s],\mathcal{H}^d)}\right] \leq c (t-s+\epsilon)^{\frac{2-\beta}{2}q} \epsilon^{\frac{2-\beta}{2}q}.
\end{equation}

We now examine the second term in (\ref{eqrdA.0}). Notice that
\begin{align*}
\int_{s-\e}^s dr \,
\Vert W_2 \Vert_{\mathcal{H}^d}^2 & \leq C \sum_{i=1}^d \int_{s-\e}^s dr \, \Big\langle \int_{s-\e}^t d\theta \int_{\R^k} d\eta\, 1_{\{\theta > r\}} S(t-\theta,x-\eta) D_{r} (b_i(u(\theta,\eta))), \\
& \qquad\qquad \int_{s-\e}^t d\tilde \theta \int_{\R^k} d\tilde\eta\, 1_{\{\tilde \theta > r\}} S(t-\tilde\theta,x-\tilde\eta) D_{r} (b_i(u(\tilde\theta,\tilde\eta)))\Big\rangle_{\mathcal{H}^d} \\
& = C \sum_{i=1}^d \int_{s-\e}^t d\theta \int_{\R^k} d\eta\, \int_{s-\e}^t d\tilde \theta \int_{\R^k} d\tilde\eta\, S(t-\theta,x-\eta) S(t-\tilde\theta,x-\tilde\eta)\\
&\qquad\qquad \times \int_{s-\e}^s dr \, \langle D_{r} (b_i(u(\theta,\eta))), D_{r} (b_i(u(\tilde\theta,\tilde\eta))) \rangle_{\mathcal{H}^d}.
\end{align*}
The $dr$-integral is equal to
$$
\langle D (b_i(u(\theta,\eta))), D (b_i(u(\tilde\theta,\tilde\eta))) \rangle_{\mathcal{H}_{s-\e,s}^d}.
$$
Therefore, we can apply H\"older's inequality to see that
\begin{align*}
&\E \, \biggl[ \biggl( \int_{s-\e}^s dr \,
\Vert W_2 \Vert_{\mathcal{H}^d}^2 \biggr)^q \biggr] \\
&\qquad \leq C \sum_{i=1}^d \left(\int_{s-\e}^t d\theta \int_{\R^k} d\eta\, \int_{s-\e}^t d\tilde \theta \int_{\R^k} d\tilde\eta\, S(t-\theta,x-\eta) S(t-\tilde\theta,x-\tilde\eta) \right)^{q-1}\\
& \qquad \qquad\qquad\times \int_{s-\e}^t d\theta \int_{\R^k} d\eta\, \int_{s-\e}^t d\tilde \theta \int_{\R^k} d\tilde\eta\, S(t-\theta,x-\eta) S(t-\tilde\theta,x-\tilde\eta)\\
&\qquad\qquad\qquad\qquad\times E\left[\langle D (b_i(u(\theta,\eta))), D (b_i(u(\tilde\theta,\tilde\eta))) \rangle^q_{\mathcal{H}_{s-\e,s}^d}\right].
\end{align*}
Using the Cauchy-Schwarz inequality, we see that the expectation above is bounded by
\begin{equation} \label{rdA.2}
\E\left[\Big\Vert D (b_i(u(\theta,\eta))) \Big\Vert_{\cH_{s-\e,s}^d}^{q} \Big\Vert D (b_i(u(\theta,\tilde{\eta})))
\Big\Vert_{\cH_{s-\e,s}^d}^{q} \right] \leq \sup_{\theta,\, \eta} \E\left[\Big\Vert D (b_i(u(\theta,\eta))) \Big\Vert_{\cH_{s-\e,s}^d}^{2q}\right].
\end{equation}
Arguing as for the term $W_1$ and using {\bf P1}, we bound the expectation by $c \e^{\frac{2-\beta}{2} q }$, and the remaining integrals are bounded by $(t-s+\e)^{q}$, so that
$$
\E \, \biggl[ \biggl( \int_{s-\e}^s dr \,
\Vert W_2 \Vert_{\mathcal{H}^d}^2 \biggr)^q \biggr] \leq C (t-s+\e)^{q} \e^{\frac{2-\beta}{2} q }.
$$
Together with \eqref{eqrdA.0} and \eqref{rdA.3}, this completes the proof.
\end{proof}
\vskip 16pt

\noindent {\bf Acknowledgement}.
The authors would like to thank Marta Sanz-Sol\'e for several useful discussions.  The first author also thanks the Isaac Newton Institute for Mathematical Sciences in Cambridge, England, for hospitality during the Spring 2010 program Stochastic Partial Differential Equations, where some of the research reported here was carried out.

\end{document}